\newtheorem{thm}{Theorem}
\theoremstyle{plain}
\newtheorem{cor}[thm]{Corollary}
\newtheorem{lem}[thm]{Lemma}
\newtheorem{prop}[thm]{Proposition}
\theoremstyle{remark}
\newtheorem{rem}[thm]{Remark}
\newtheorem*{acknowledgements}{Acknowledgements}
\renewcommand{\mod}{\mathop{\rm mod}\nolimits}
\newcommand{\sign}{\mathop{\rm sign}\nolimits}
\renewcommand{\div}{\, | \,}
\newcommand{\ddiv}{\, || \,}
\newcommand{\notdiv}{\mathopen{\mathchoice
             {\not{|}\,}
             {\not{|}\,}
             {\!\not{\:|}}
             {\not{|}}
             }}
\newcommand{\SL}[1]{\mathop{\rm SL}_{#1} \nolimits}
\newcommand{\PSL}[1]{\mathop{\rm PSL}_{#1} \nolimits}
\renewcommand{\Re}{\mathop{\rm Re}\nolimits}
\renewcommand{\Im}{\mathop{\rm Im}\nolimits}
\newcommand{\quotient}[2]{
        \mathchoice
            {
                \text{\raise1ex\hbox{$#1$}\Big/\lower1ex\hbox{$#2$}}%
            }
            {
                #1\,/\,#2
            }
            {
                #1\,/\,#2
            }
            {
                #1\,/\,#2
            }
    }
\newcommand{\lquotient}[2]{
        \mathchoice
            {
                \text{\lower1ex\hbox{$#1$}\Big \backslash \raise01ex\hbox{$#2$}}%
            }
            {
                #1\,\backslash\,#2
            }
            {
                #1\,\backslash\,#2
            }
            {
                #1\,\backslash\,#2
            }
    }
\newcommand{\rquotient}[2]{
        \mathchoice
            {
                \text{\raise01ex\hbox{$#1$}\Big/\lower1ex\hbox{$#2$}}%
            }
						{
                #1\,/\,#2
            }
            {
                #1\,/\,#2
            }
            {
                #1\,/\,#2
            }
    }
\newcommand{\lrquotient}[3]{
        \mathchoice
            {
                \text{\lower1ex\hbox{$#1$}\Big \backslash \raise01ex\hbox{$#2$}\Big/\lower1ex\hbox{$#3$}}%
            }
            {
                #1\,\backslash\,#2\,/\,#3
            }
            {
                #1\,\backslash\,#2\,/\,#3
            }
            {
                #1\,\backslash\,#2\,/\,#3
            }
    }
\newcommand{\sm}{\left(\begin{smallmatrix}}
\newcommand{\esm}{\end{smallmatrix}\right)}
\newcommand{\bpm}{\begin{pmatrix}}
\newcommand{\ebpm}{\end{pmatrix}}
\newcommand{\starsum}{\sideset{}{^\star}\sum}
\numberwithin{equation}{section}
\begin{document}
\selectlanguage{english}

\bibliographystyle{alpha}

\title{Kloosterman sums do not correlate with periodic functions}
\author{Raphael S. Steiner}
\address{Department of Mathematics, ETH Z\"urich, 8092 Z\"urich, CH}%
\email{raphael.steiner.academic@gmail.com}%


\date{\today}
\subjclass[2020]{11L05, (11F72, 11T23)}
\keywords{Kloosterman sums, arithmetic progression, Kuznetsov trace formula}


\begin{abstract} We provide uniform bounds for sums of Kloosterman sums in \emph{all} arithmetic progressions. As a consequence, we find that Kloosterman sums do not correlate with periodic functions.
\end{abstract}
\maketitle


\section{Introduction} \label{sec:intro}

The classical Kloosterman sum
$$
S(m,n;c) = \sum_{ad \equiv 1 \mod(c)} e\left( \frac{am+dn}{c} \right),
$$
where $m,n \in \mathbb{Z}$ are integers and $e(z)=\exp(2\pi i z)$, are a well-studied and widely applied source for cancellation. They themselves admit square-root cancellation due to Weil
\begin{equation}
	|S(m,n;c)| \le \tau(c) (m,n,c)^{\frac{1}{2}}c^{\frac{1}{2}},
	\label{eq:Ex-Weilbound}
\end{equation}
see for example \cite[Thm 9.2]{Knightly-Li}, and the normalised numbers $S(m,n;p)/p^{1/2}$, for $p$ a prime and $mn \neq 0$, are expected (and in some cases proven) to follow a Sato--Tate distribution, see for example \cite{MichelAutourKloos} and references therein. In a similar vein, Linnik \cite{LinnikKloosterman} and Selberg \cite{SelbergFourier} (independently) conjectured an additional square-root cancellation when summed up over the modulus when $mn\neq 0$\footnote{Here, we have stated the uniform version by Sarnak--Tsimerman \cite{SarTsim} with the $|mn|^{o(1)}$ safety factor.}:
\begin{equation}
\sum_{c \le C} \frac{S(m,n;c)}{c} \ll |mnC|^{o(1)}.
\label{eq:Linnik-Selberg-conjecture}
\end{equation}
Here and throughout, we make use of Vinogradov's notation. A slightly more extended version of this conjecture would imply sharp error terms in the prime geodesic theorem \cite{IwPrimeGeodesic} and an optimal covering exponent for $S^3$ \cite{S3covexp}, see also \cite{twistlinnik}.

Recently, it was conjectured by El Abdalaoui--Shparlinski--Steiner \cite{Klooster-low-complexity-seq} that Kloosterman sums should in general not correlate with low-complexity sequences. Whilst this was shown to be true for (vertical) averages over the entries of the Kloosterman sums, see \cite[Ex. 10.4]{Kowalski-notes-Klooster-low-complexity} or \cite[Thm 2.8]{Klooster-low-complexity-seq}, it is wide open for (horizontal) averages over the modulus in this generality. In this paper, we make progress towards this conjecture by completely resolving the case of periodic functions. We accomplish this is a strong sense by providing a power saving.

%
%

\begin{thm} Let $m,n \in \mathbb{Z}\backslash\{0\}$ be two non-zero integers and $F$ a periodic function. Then, we have
	$$
	\sum_{c \le C} \frac{S(m,n;c)}{\sqrt{c}} F(c) \ll_{m,n,F} C^{1-\frac{9}{32}+o(1)}.
	$$
	\label{thm:periodic-correlation}
\end{thm}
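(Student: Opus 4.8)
The plan is to expand the periodic weight into additive characters, reduce to a single arithmetic progression, and feed the resulting twisted sums of Kloosterman sums into the Kuznetsov formula on a congruence group, where the size of the answer is dictated by the exceptional spectrum. Concretely, let $q$ be a period of $F$ and write $F(c)=\sum_{a \bmod q}F(a)\,\mathbf 1_{c\equiv a\,(q)}$ (dually, $F(c)=\sum_{r \bmod q}\widehat F(r)\,e(rc/q)$). Since $q$ and these coefficients depend only on $F$, the theorem follows once I bound, uniformly in the residue $a$, the progression sum $\sum_{c\le C,\,c\equiv a\,(q)}S(m,n;c)/\sqrt c$ by $\ll_{m,n,q}C^{1-\frac{9}{32}+o(1)}$. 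The guiding numerology is that $1-\tfrac{9}{32}=\tfrac{23}{32}=\tfrac12+2\theta$, where $\theta=\tfrac{7}{64}$ is the Kim--Sarnak bound towards the Ramanujan--Selberg conjecture; the power saving $\tfrac{9}{32}=\tfrac12-2\theta$ is thus exactly the gap between square-root cancellation and the best available bound on exceptional eigenvalues.

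The heart of the matter is the $c^{-1}$-weighted estimate
$$A_a(X):=\sum_{\substack{c\le X\\ c\equiv a\,(q)}}\frac{S(m,n;c)}{c}\ll_{m,n,q}X^{2\theta+o(1)}.$$
Granting this, partial summation against $\sqrt c$ gives $\sum_{c\le X,\,c\equiv a\,(q)}S(m,n;c)/\sqrt c\ll X^{\frac12+2\theta+o(1)}=X^{\frac{23}{32}+o(1)}$, since both the boundary term $\sqrt X\,A_a(X)$ and the integral $\int_1^X t^{-1/2}A_a(t)\,dt$ are of this order (the exponent $2\theta-\tfrac12=-\tfrac{9}{32}$ is negative, so the integral is controlled by its upper endpoint). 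Summing over the finitely many residues $a$, with $q$ fixed, then yields the claim, the $q$-dependence being absorbed into the implied constant.

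To prove the bound on $A_a(X)$ I would insert a smooth dyadic partition of unity (passing from the sharp cutoff $c\le X$ to smooth weights at a cost of $o(1)$ in the exponent), detect the congruence $c\equiv a\,(q)$ by additive characters, and apply the Kuznetsov trace formula for $\Gamma_0(q)$ — equivalently, an additively twisted Kuznetsov formula — whose geometric side reproduces precisely the weighted sums of $S(m,n;c)$ over the relevant progressions through the Kloosterman sums attached to the cusp pairs of $\Gamma_0(q)$. By the symmetry $S(-m,-n;c)=\overline{S(m,n;c)}$ it suffices to treat $mn>0$ and $mn<0$ separately; the opposite-sign case produces a $K$-Bessel transform exhibiting no exceptional growth and is therefore better, so $mn>0$ is the bottleneck. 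On the spectral side the continuous spectrum, the holomorphic forms and the tempered Maass forms contribute at most $X^{1/6+o(1)}$, exactly as in Kuznetsov's classical estimate, hence are subdominant since $\tfrac16<2\theta$. The dominant term comes from the exceptional Maass forms, with eigenvalue $\tfrac14-s_j^2$ and $0<s_j\le\theta$: at the relevant scale the $J$-Bessel transform of the test function evaluated at $t_j=is_j$ grows like $X^{2s_j}$, and after inserting Rankin--Selberg (or large-sieve) bounds for the normalised coefficients $\rho_j(m)\overline{\rho_j(n)}$ together with control on the number of exceptional eigenvalues one obtains $A_a(X)\ll X^{2\theta+o(1)}$.

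The principal obstacle is the uniform control of the exceptional spectrum of $\Gamma_0(q)$, that is, showing that its total contribution really is of order $X^{2\theta}$ and no larger; this demands good bounds on the Fourier coefficients of exceptional forms, on their count, and polynomial control of the joint $m,n,q$-dependence. A subsidiary technical difficulty is the bookkeeping required to realise $\sum_{c\equiv a\,(q)}$ through the Kloosterman sums of all cusp pairs of $\Gamma_0(q)$ for every residue $a$, including the degenerate cases $\gcd(a,q)>1$. It is precisely the slowly decaying weight $c^{-1/2}$ — rather than $c^{-1}$ — that converts the exceptional exponent $2\theta$ into the saving $\tfrac12-2\theta=\tfrac{9}{32}$ off the trivial bound $C^{1+o(1)}$ furnished by Weil's estimate \eqref{eq:Ex-Weilbound}.
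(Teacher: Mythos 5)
Your top-level architecture and numerology coincide with the paper's: decompose $F$ into indicators of progressions modulo its period, prove the $c^{-1}$-weighted progression bound (this is exactly the paper's Theorem \ref{thm:qQ-depence}, whose two terms $C^{1/6}$ and $Q^{1/2}(1+C/Q^{2})^{2\vartheta}$ match your tempered/exceptional split), and then pass from weight $c^{-1}$ to $c^{-1/2}$ by partial summation, using $1-\tfrac{9}{32}=\tfrac12+2\vartheta$ with the Kim--Sarnak value $\vartheta=\tfrac{7}{64}$. That reduction, and the observation that the exceptional Maa{\ss} spectrum is the dominant contribution, are all correct and are indeed how the paper proceeds.

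The genuine gap is in the step you dismiss as ``subsidiary bookkeeping'': realizing the progression sums spectrally for \emph{every} residue $a$, in particular the ramified ones with $1<(a,q)<q$ (e.g.\ $c\equiv p \bmod p^{2}$), which the paper identifies as its central difficulty. Your mechanism --- detect $c\equiv a\,(q)$ by additive characters and assert that the geometric side of Kuznetsov on $\Gamma_0(q)$ ``reproduces precisely'' the resulting twisted sums --- is not true as stated: the additively twisted sums $\sum_{c}S(m,n;c)e(rc/q)/c$ are not the Kloosterman sums attached to cusp pairs of $\Gamma_0(q)$, with trivial or character multiplier, and no such identification is available; this is precisely what would have to be proved. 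What the paper does instead is stratify by $d=(c,q)$, write $c=dc'$, detect $c'\equiv a/d \pmod{q/d}$ by \emph{multiplicative} characters $\chi$ modulo $q/d$ (legitimate since $(c',q/d)=1$ on this stratum), and then prove the key identity (Lemma \ref{lem:Gamma0Q-Kloosterman}, in the paper's notation your $q$ is $Q$ and your $d$ is $q$) that $\overline{\chi(c')}S(m,n;c'd)$ is recovered only after \emph{averaging over the family of cusps} $r/d$, $r \bmod (d,q/d)$; no single cusp pair suffices, and this cusp-averaging is the paper's main novelty. Moreover, the spectral input you invoke (``Rankin--Selberg/large-sieve bounds plus control of the exceptional count'') is obtained in the paper via pre-trace and Petersson formulas whose geometric sides involve \emph{new} Kloosterman sums for the auxiliary group $\Gamma_{0,\pm1}(Q;Q/q)$ at these cusps, and these require Weil-quality bounds proved from scratch (Lemma \ref{lem:Klooster-T-bound}, Corollary \ref{cor:bound-kloosterman-after-CS}); even for fixed modulus the trivial bound leaves the relevant Kloosterman-zeta tails divergent, so this cannot be skipped. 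Finally, a minor but real error: your claim that the $mn<0$ case ``exhibits no exceptional growth'' is false --- the $K$-Bessel transform also grows like $X^{-2|\Im t|}$ at exceptional parameters (cf.\ \eqref{eq:trans-est-maass-minus-triv}), and the paper must and does treat both signs with the same exceptional factor.
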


Natually, Theorem \ref{thm:periodic-correlation} is comprised of the building blocks where $F$ is taken to be the indicator function of an arithmetic progression. Here, we state only dependence on the arithmetic progression, however a completely uniform version is avaible at the end of the paper, see Theorem \ref{thm:mnqQ-depence}.

\begin{thm} Let $m,n \in \mathbb{Z}\backslash\{0\}$ be two non-zero integers. Further let $a,q,Q \in \mathbb{N}$ be integers with $q \mid Q$ and $(a,Q/q)=1$. Then, we have
	\begin{multline*}
		\sum_{\substack{c \le C \\ c \equiv aq \mod(Q)}} \frac{S(m,n;c)}{c} \\ \ll_{m,n} (QC)^{o(1)} \left( \frac{(q,Q/q)^{1/3}}{q^{1/3}}C^{1/6}+ \frac{Q^{1/2}(q,Q/q)^{1/2}}{q^{1/2}} \left( 1+ \frac{C}{Q^2}  \right)^{2\vartheta} \right),
	\end{multline*}
where $\vartheta$ is the best bound towards the Selberg eigenvalue conjecture on $\Gamma_0(Q)$.
	\label{thm:qQ-depence}
\end{thm}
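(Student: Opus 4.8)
The strategy is the Kuznetsov trace formula, and the two terms in the claimed bound reflect its two sources: the tempered bulk of the spectrum, and the finitely many exceptional eigenvalues below $\tfrac14$, which is precisely where $\vartheta$ enters. First I would reduce the congruence. Since $q \mid Q$ and $(a,Q/q)=1$, the condition $c \equiv aq \mod(Q)$ forces $q \mid c$; writing $c = qc'$ it becomes $c' \equiv a \mod(Q/q)$, with $(c',Q/q)=1$ automatic. Detecting this residue class by orthogonality of Dirichlet characters modulo $Q/q$,
\[
\mathbf{1}_{c' \equiv a \,(Q/q)} = \frac{1}{\varphi(Q/q)} \sum_{\chi \,(Q/q)} \overline{\chi}(a)\, \chi(c'),
\]
and smoothing the sharp cutoff $c \le C$ into dyadic pieces, the task reduces to bounding, for each $\chi$ and each dyadic $X \le C$, a smooth sum of $\chi(c/q)\, S(m,n;c)/c$ over $c \equiv 0 \mod(q)$.

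Each such smooth sum is the geometric side of a Kuznetsov formula: for $\chi$ trivial this is the formula for $\Gamma_0(q)$, while for $\chi$ nontrivial it is the formula at level $Q$ with $\chi$, lifted to modulus $Q$, entering as a nebentypus. Opening the formula converts the sum into a spectral average
\[
\sum_j \overline{\rho_j(m)}\, \rho_j(n)\, h(t_j) \;+\; (\text{Eisenstein contribution}),
\]
where the $\rho_j$ are Fourier coefficients of an orthonormal basis of Maass forms of level $Q$, the $t_j$ are the spectral parameters, and $h$ is the Bessel transform of the chosen bump function, tuned to the length $X$ and to $\sqrt{mn}$.

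To estimate the spectral side I would split off the exceptional spectrum. For the tempered part ($t_j$ real) I truncate at a parameter $T$, bounding the short range by the Deshouillers--Iwaniec large sieve for the coefficients $\rho_j$ and the tail by the rapid decay of $h$; balancing the two yields a cube-root optimum, giving a first term equal to $\bigl((q,Q/q)\,q^{-1}C^{1/2}\bigr)^{1/3}$ up to $(QC)^{o(1)}$, whose exponent $C^{1/6}$ is the classical Kuznetsov one, and the Eisenstein contribution is treated identically and is no larger. For the exceptional eigenvalues $t_j = i\theta_j$ with $0 < \theta_j \le \vartheta$, the transform $h(i\theta_j)$ grows like a power of the length; bounding $\theta_j \le \vartheta$ together with a coefficient and dimension count at level $Q$ then produces the second term $\bigl((q,Q/q)\,Q\,q^{-1}\bigr)^{1/2}\,(1+C/Q^2)^{2\vartheta}$.

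The main obstacle is uniformity in $q$, $Q$, and the gcd $(q,Q/q)$. The Kloosterman moduli run over $c \equiv 0 \mod(q)$, whereas the spectrum and the exceptional-eigenvalue control live most naturally at level $Q$; reconciling the two divisibility conditions while retaining the explicit factor $(q,Q/q)$ that records their overlap, and performing the archimedean analysis of the Bessel transform near the exceptional spectrum cleanly enough to extract both the exact exponent $2\vartheta$ and the shape $1+C/Q^2$, is where essentially all of the difficulty lies.
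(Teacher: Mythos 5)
There is a genuine gap at the very center of your argument: the assertion that, after character detection, ``for $\chi$ nontrivial it is the formula at level $Q$ with $\chi$, lifted to modulus $Q$, entering as a nebentypus.'' The geometric side of the Kuznetsov formula on $\Gamma_0(Q)$ with nebentypus $\chi$ at the standard cusp pair $(\infty,\infty)$ consists of the \emph{twisted} Kloosterman sums $\sum_{ad \equiv 1 \mod(c)} \overline{\chi}(d)\, e((ma+nd)/c)$ with $c \equiv 0 \mod(Q)$; it does not produce the sums you actually need, namely $\overline{\chi}(c/q)\, S(m,n;c)$ over $c \equiv 0 \mod(q)$ with $(c/q,Q/q)=1$, where the character is evaluated at the \emph{modulus} rather than sitting inside the Kloosterman sum. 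Bridging this discrepancy is exactly what Blomer--Mili\'cevi\'c's identity, or the Drappeau/Kiral--Young choice of cusps, accomplishes --- but only in the case $q=1$ (and, with minor changes, when $(q,Q/q)=1$). For the ``ramified'' progressions with $(q,Q/q)>1$ (e.g.\ $c \equiv p \mod(p^2)$), which this theorem must cover and which the paper singles out as its main novelty, no single cusp pair works: Lemma \ref{lem:Gamma0Q-Kloosterman} shows that the cusp Kloosterman sum $S^{\chi,\kappa}_{\infty,r/q}(m,nw_q;cq\sqrt{w_q})$ carries an extra congruence $d \equiv c\overline{r} \mod((q,Q/q))$ inside the sum, and only after averaging over the inequivalent cusps $r/q$, $r \mod((q,Q/q))$, $(r,Q)=1$, does one recover $(-i)^{\kappa}\overline{\chi}(c) S(m,n;cq)$. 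This additional cusp average is the key idea of the proof and is absent from your proposal; indeed, your closing paragraph concedes that reconciling the modulus-$q$ Kloosterman sums with the level-$Q$ spectrum is ``where essentially all of the difficulty lies,'' which is precisely the step a proof must supply.

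The gap propagates into your spectral estimates. Once the cusp average is introduced, the spectral side involves Fourier coefficients at the cusps $r/q$, needed uniformly in $q$, $Q$ and $(q,Q/q)$, and these are not covered by the off-the-shelf Deshouillers--Iwaniec large sieve you invoke. The paper proves these bounds from scratch (Propositions \ref{prop:chi-avg-fourier-bound}, \ref{prop:exceptional-chi-avg-fourier-bound}, and \ref{prop:Fourier-holo-diag-upper-bound}): it amalgamates the characters mod $Q/q$ into the group $\Gamma_{0,\pm 1}(Q;Q/q)$, evaluates its Kloosterman sums at the cusp pairs $(\infty,\infty)$ and $(r/q,r/q)$ (Lemma \ref{lem:Kloos-Gamma01-Q-Q/q}), and bounds them by a local analysis via Gauss sums (Lemma \ref{lem:Klooster-T-bound}, Corollary \ref{cor:bound-kloosterman-after-CS}); the exceptional term $(1+C/Q^2)^{2\vartheta}$ then comes from density estimates built on those same Kloosterman bounds, not from a bare ``coefficient and dimension count at level $Q$.'' A smaller but real defect: for the principal character you propose using the formula for $\Gamma_0(q)$, whose geometric side runs over \emph{all} $c \equiv 0 \mod(q)$ and therefore misses the coprimality constraint $(c/q,Q/q)=1$ imposed by the character detection, whereas in the paper this constraint is automatically encoded in the set $\mathcal{C}_{\infty,r/q}$.
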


Theorem \ref{thm:periodic-correlation} thus follows from an appeal to the Kim--Sarnak bound $\vartheta \le \frac{7}{64}$ \cite{KimSarnak}. The two extreme cases, $q=1$ and $q=Q$, in Theorem \ref{thm:qQ-depence} have been previously treated by Blomer--Mili{\'c}evi{\'c} \cite{BlomerKloos} and Deshouillers--Iwaniec \cite{IwDes}, Ganguly--Sengupta \cite{APKloosterman}, \cite{APKloostermanCorr}, respectively. In either case, the strength of Theorem \ref{thm:qQ-depence} is comparable to the one achieved by Blomer--Mili{\'c}evi{\'c} and Ganguly--Sengupta. The main novelty of Theorem \ref{thm:qQ-depence} is that it applies to \emph{all} arithmetic progressions, in particular also to `ramified' ones, such as $c \equiv p \mod(p^2)$. It is precisely these types of arithmetic progressions that are the most difficult and are at the heart of this paper.

The technical input, as in the previous works, is the Bruggeman--Kuznetsov trace formula. Deshouillers--Iwaniec and Ganguly--Sengupta were able to directly apply the formula for the group $\Gamma_0(Q)$, whereas Blomer--Mili{\'c}evi{\'c} proved and made use of an identity between the Kloosterman sum and its Dirichlet-twisted variants to which they were then able to apply the Bruggeman--Kuznetsov trace formula on $\Gamma_0(Q)$ with a Dirichlet character as multiplier. Later, Drappeau \cite{DrapKloost} and Kiral--Young \cite{Young-Kiral-Kloosterman} pointed out that Blomer--Mili{\'c}evi{\'c}'s Kloosterman identity may be replaced by a particular choice of cusps in the Bruggeman--Kuznetsov trace formula (again averaged over Dirichlet-character multpliers). We show that with an additional average over various cusps one is also able to resolve the `ramified' progressions. For this, we compute several Kloosterman sums associated to the group $\Gamma_0(Q)$ and Dirichlet character multiplier at various cusps in more generality than the ones considered by Drappeau and Kiral--Young. In order to get stronger error bounds, we also compute and bound Kloosterman sums associated to the group
$$
\Gamma_{0,\pm 1}(Q;Q/q) = \left\{ \gamma \in \SL{2}(\mathbb{Z}) | \gamma \equiv \sm \ast & \ast \\ 0 & \ast \esm \mod(Q),\ \gamma \equiv \pm \sm 1 & \ast \\ 0 & 1 \esm \mod(\tfrac{Q}{q})\right\}
$$
with trivial multiplier system at various cusps.

Finally, a plausible next class of low-complexity sequences to consider are those stemming from a finite automaton. Here, it would suffice prove cancellation in the two-autocorrelation, see \cite{DrapAutomaton}. However, the lack of an adequate substitution for the Bruggeman--Kuznetsov formula makes this particularly challenging.



\subsection{Overview} In \S\ref{sec:auto-kloos} and \S\ref{sec:kloosbound}, we compute and bound the relevant Kloosterman sums. Subsequently, we make use of these bounds to estimate the Fourier coefficients of automorphic forms in \S\ref{sec:maassmain} and \S\ref{sec:holmain}. Finally, in \S\ref{sec:final}, we apply the Bruggeman--Kuznetsov trace formula and establish Theorem \ref{thm:qQ-depence}.

\begin{acknowledgements} I would like to thank Igor Shparlinski and El Houcein El Abdalaoui for fruitful discussions and encouragement, and Sary Drappeau for comments on an earlier draft. Furthermore, I would like to extend my gratitude to my employer, the Institute
	for Mathematical Research (FIM) at ETH Z\"urich.
\end{acknowledgements}

\section{Kloosterman sums} \label{sec:auto-kloos}

Let $-I \in \Gamma \subseteq \SL{2}(\mathbb{R}) $ be a Fuchsian group of the first kind, $\kappa \in \mathbb{R}$, and $\upsilon$ a multiplier system of weight $\kappa \in \{0,1\}$, i.e.\@ a character $\upsilon: \Gamma \to \{z \in \mathbb{C} \, | \, |z|=1\}$ with $\upsilon(-I)= (-1)^{\kappa}$. Let $B=\{ \sm 1 & n \\ & 1 \esm \in \SL{2}(\mathbb{R}) \ | \ n \in \mathbb{Z}\}$. Fix for each cusp $\mathfrak{a}$ of $\Gamma$, a scaling matrix $\sigma_{\mathfrak{a}}$. That is $\sigma_{\mathfrak{a}} \in \SL{2}(\mathbb{R})$ satisfying
\begin{itemize}
	\item $\sigma_{\mathfrak{a}} \infty = \mathfrak{a}$, 
	\item $\gamma_{\mathfrak{a}}:=\sigma_{\mathfrak{a}} \sm 1 & 1 \\ & 1 \esm \sigma_{\mathfrak{a}}^{-1}$ projects to a generator of the stabiliser $\widehat{\Gamma}_{\mathfrak{a}}$ of $\mathfrak{a}$ in $\widehat{\Gamma}$, where ${\ }\widehat{{}}{\ }$ denotes the projection $\SL{2}(\mathbb{R}) \to \PSL{2}(\mathbb{R})$,
\end{itemize}
We also assume a consistency relationship:
\begin{itemize}
	\item $\sigma_{\gamma \mathfrak{a}} = \gamma \sigma_{\mathfrak{a}}$ for all cusps $\mathfrak{a}$ and $\gamma \in \Gamma$.
\end{itemize}
We shall denote by $\eta_{\mathfrak{a}}\in [0,1[$ the cusp parameter of the cusp $\mathfrak{a}$, which is characterised by $e(\eta_{\mathfrak{a}})=\upsilon(\gamma_{\mathfrak{a}})$. A cusp $\mathfrak{a}$ is called \emph{singular} if and only if $\eta_{\mathfrak{a}}=0$. For two cusps $\mathfrak{a}, \mathfrak{b}$, we denote the set
$$
\mathcal{C}_{\mathfrak{a},\mathfrak{b}} = \left\{ c \in \mathbb{R}^+ | \exists \sm \ast & \ast \\ c & \ast \esm \in \sigma_{\mathfrak{a}}^{-1} \Gamma \sigma_{\mathfrak{b}} \right\}.
$$
Given two additional integers $m,n$, and $c\in \mathcal{C}_{\mathfrak{a},\mathfrak{b}}$, the Kloosterman sums are defined by (cf. \cite[\S 3.3]{thesis} or \cite{GoldfeldSarnak} with slightly diffent normalisation).
\begin{equation}
	S^{\Gamma,\upsilon,\kappa}_{\mathfrak{a},\mathfrak{b}}(m,n;c) = e^{-\frac{\pi i}{2}\kappa} \sum_{\sm a & b \\ c & d \esm \in B \backslash \sigma_{\mathfrak{a}}^{-1} \Gamma \sigma_{\mathfrak{b}} \slash B } \overline{\upsilon(\sigma_{\mathfrak{a}} \sm a & b \\ c & d \esm \sigma_{\mathfrak{b}}^{-1})} e\left( (m+\eta_{\mathfrak{a}})\frac{a}{c}+(n+\eta_{\mathfrak{b}})\frac{d}{c} \right). 
	\label{eq:genKloos}
\end{equation}
If the group $\Gamma$ is clear from the context, we shall drop it from the superscript and simply write $S^{\upsilon,\kappa}_{\mathfrak{a},\mathfrak{b}}(m,n;c)$.

\begin{rem}
	The cusp parameters are independent of the choice of scaling matrix and further only depend on their $\Gamma$-orbit of the cusp, see \cite[\S 3.3]{MFaF}. However, the Kloosterman sums and the Fourier coefficients later on are not. In particular, we have
	$$
	S^{\upsilon,\kappa}_{\sigma_{\mathfrak{a}}u(\alpha),\sigma_{\mathfrak{b}}u(\beta)}(m,n;c) = e\left( -(m+\eta_{\mathfrak{a}})\alpha+(n+\eta_{\mathfrak{b}}) \beta \right) S^{\upsilon,\kappa}_{\sigma_{\mathfrak{a}},\sigma_{\mathfrak{b}}}(m,n;c),
	$$
	where $u(\alpha)= \sm 1& \alpha \\ & 1 \esm$.
\end{rem}

\begin{rem}
	The Kloosterman sums satisfy $\overline{S^{\upsilon,\kappa}_{\mathfrak{a},\mathfrak{b}}(m,n;c)}=S^{\upsilon,\kappa}_{\mathfrak{b},\mathfrak{a}}(n,m;c)$, see \cite[Prop. 3.3.2]{thesis}. Note also $\mathcal{C}_{\mathfrak{a},\mathfrak{b}}=\mathcal{C}_{\mathfrak{b},\mathfrak{a}}$, cf.\@ \cite[Lem 3.2.7]{thesis}. In particular, the Kloosterman sums $S^{\upsilon,\kappa}_{\mathfrak{a},\mathfrak{a}}(m,m;c)$ are real.
	\label{rem:Kloos-conj}
\end{rem}

\subsection{Evaluations for $\Gamma=\Gamma_0(Q)$, $\upsilon=\chi$, and $\kappa \in \{0,1\}$ with $\chi(-1)=(-1)^{\kappa}$} \label{sec:Klos-Gamma_0Q} We let
$$
\Gamma = \Gamma_0(Q) = \left\{\gamma \in \SL{2}(\mathbb{Z}) \, | \, \gamma \equiv \sm \ast & \ast \\ 0 & \ast \esm \, \mod(Q) \right\}
$$
be a congruence subgroup. For a Dirichlet character $\chi$ of modulus $Q$, we define the multiplier system $\upsilon=\upsilon_{\chi}$ by
$$
\upsilon_{\chi}\left( \sm a & b \\ c & d \esm \right) = \chi(d), \quad \forall \sm a & b \\ c & d \esm \in \Gamma_0(Q).
$$
We shall often simply write $\chi$ instead of $\upsilon_{\chi}$. We let the weight $\kappa= 0,1$ depending on whether $\chi$ is even, respectively odd. A representative set of the set of cusps of $\Gamma_0(Q)$ is given by $r/q$ with $q|Q$, $(r,q)=1$, and $r \mod((q,Q/q))$ (see for example \cite[Prop. 2.6]{IwClassic}). We remark that the cusp $\infty$ is equivalent modulo $\Gamma$ to $1/Q$ and that the representatives $r/q$ may be chosen in a way to further satisfy $(r,Q/q)=1$. The cusp $r/q$ has width $w_{q}=\frac{Q}{(Q,q^2)}$ and a generator for the stabiliser of the cusp is given by
\begin{equation}
\gamma_{r/q}= \begin{pmatrix} 1- rqw_{q} & r^2w_{q} \\ - q^2w_{q}   & 1+ rqw_{q} \end{pmatrix}.
\label{eq:gamma-r/q}
\end{equation}
From now on, we assume that the Dirichlet character $\chi$ is induced by one of modulus $Q/q$, such that $\chi(1+rqw_q)=\chi(1)=1$ as $qw_q=\frac{Q}{q} \cdot \frac{q^2}{(Q,q^2)}$. Hence, the cusps $r/q$ are singular. Likewise, the cusp $\infty$ is singular. Scaling matrices for the cusps $r/q$ may be given by
\begin{equation}
\sigma_{r/q} = \begin{pmatrix} r & x \\ q & y \end{pmatrix} \begin{pmatrix} \sqrt{w_{q}} & \\ & 1/\sqrt{w_{q}}  \end{pmatrix},
\label{eq:sigma-r/q}
\end{equation}
where $x,y$ are integers with $ry-qx=1$. We may and shall assume that $\frac{Q}{q} \div x$, so that $ry \equiv 1 \mod(Q)$. 
 For the cusp $\infty$, we choose the scaling matrix $\sigma_{\infty}=I$.
 \begin{lem} With the above choices, we have
 	$$
 	\mathcal{C}_{\infty,r/q} = \{ cq \sqrt{w_{q}}:c \in \mathbb{N}, (c,\tfrac{Q}{q})=1 \}.
 	$$
 	and for $cq\sqrt{w_q} \in \mathcal{C}_{\infty,r/q}$, we have
 	$$
 	S_{\infty,r/q}^{\chi,\kappa}(m,nw_{q};cq\sqrt{w_{q}}) = (-i)^{\kappa}  \overline{\chi(c)} \sum_{\substack{ad \equiv 1 \mod(cq) \\  d \equiv c\overline{r} \mod((q,Q/q))}} e\left( \frac{ma+nd}{cq} \right),
 	$$
 	$$
 		\sum_{\substack{r \mod((q,Q/q)) \\ (r,Q)=1}} S^{\chi,\kappa}_{\infty,r/q}(m,nw_q;cq\sqrt{w_q}) = (-i)^{\kappa} \overline{\chi(c)} S(m,n;cq).
 	$$
 	\label{lem:Gamma0Q-Kloosterman}
 \end{lem}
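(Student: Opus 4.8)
The plan is to parametrise the double coset space $B\backslash\Gamma_0(Q)\sigma_{r/q}/B$ explicitly and to read off both $\mathcal{C}_{\infty,r/q}$ and the Kloosterman sum from the resulting congruences. Since $\sigma_{\infty}=I$, every element of $\sigma_{\infty}^{-1}\Gamma\sigma_{r/q}=\Gamma_0(Q)\sigma_{r/q}$ is $M=\gamma\sigma_{r/q}$ with $\gamma=\sm \alpha & \beta \\ Q\lambda & \delta \esm\in\Gamma_0(Q)$, and I would multiply out using \eqref{eq:sigma-r/q}. Writing $A=\alpha r+\beta q$, $D=Q\lambda x+\delta y$ and $B_2=\alpha x+\beta y$, the entries of $M$ become $\sm A\sqrt{w_q} & B_2/\sqrt{w_q} \\ c_\star q\sqrt{w_q} & D/\sqrt{w_q}\esm$ with $c_\star:=\tfrac{Q}{q}\lambda r+\delta\in\mathbb{Z}$. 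The determinant $\det M=AD-B_2c_\star q=1$ gives $AD\equiv 1\pmod{c_\star q}$, the lower-left entry identifies $c=c_\star q\sqrt{w_q}$, and inverting (computing $\gamma=M\sigma_{r/q}^{-1}$) shows that $\gamma\in\Gamma_0(Q)$ is equivalent to the single congruence $q(c_\star y-D)\equiv 0\pmod{Q}$, i.e. $D\equiv c_\star y\equiv c_\star\bar r\pmod{Q/q}$ (using $ry\equiv 1\pmod{Q}$). The lower-right entry $\delta=Dr-c_\star qx\equiv c_\star\pmod{Q/q}$ then yields $\upsilon_\chi(\gamma)=\chi(\delta)=\chi(c_\star)$, which is constant over the sum and produces the factor $\overline{\chi(c)}$, while $e^{-\pi i\kappa/2}=(-i)^\kappa$.

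From this I would first deduce the description of $\mathcal{C}_{\infty,r/q}$: a value $c_\star$ occurs iff there are integers $A,D$ with $AD\equiv 1\pmod{c_\star q}$ and $D\equiv c_\star\bar r\pmod{Q/q}$. Necessity of $(c_\star,Q/q)=1$ is immediate from $c_\star\equiv\delta\pmod{Q/q}$ and $(\delta,Q)=1$; conversely, when $(c_\star,Q/q)=1$ the residue $c_\star\bar r$ is a unit modulo $Q/q$, and a short Chinese Remainder argument produces $D$ in that class with $(D,c_\star q)=1$, after which $A\equiv\bar D$ works. Taking $c_\star>0$ gives the stated set.

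For the Kloosterman sum, the left and right $B$-actions translate, on the level of $(A,D)$, into the independent translations $A\mapsto A+kc_\star q$ (fixing $D$) and $D\mapsto D+\ell c_\star q w_q$ (fixing $A$), so the double cosets with fixed $c_\star$ biject with pairs $(A\bmod c_\star q,\ D\bmod c_\star q w_q)$ subject to $AD\equiv 1\pmod{c_\star q}$ and $D\equiv c_\star\bar r\pmod{Q/q}$; the summand $e\big((m+\eta_\infty)\tfrac{a}{c}+(nw_q+\eta_{r/q})\tfrac{d}{c}\big)$ collapses to $e\big(\tfrac{mA+nD}{c_\star q}\big)$, which sees only $(A,D)\bmod c_\star q$. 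The one genuinely delicate point is that the invariant in the $d$-variable is $D$ modulo $c_\star q w_q$, finer than the target modulus $c_\star q$, so I must rule out spurious multiplicity. This is where the width enters: combining the relation $qw_q=\tfrac{Q}{q}\cdot\tfrac{q^2}{(Q,q^2)}$ with the prime-by-prime identity $(Q,q^2)=q\,(q,Q/q)$ gives $qw_q=Q/(q,Q/q)$, hence $c_\star q w_q=\operatorname{lcm}(c_\star q,Q/q)$ (using $(c_\star,Q/q)=1$). By the Chinese Remainder Theorem the two congruences $D\equiv\bar A\pmod{c_\star q}$ and $D\equiv c_\star\bar r\pmod{Q/q}$ pin down $D\bmod c_\star q w_q$ uniquely, and they are compatible precisely when $\bar A\equiv c_\star\bar r\pmod{(q,Q/q)}$. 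Thus the double cosets biject, without multiplicity, with $\{(a,d)\bmod c_\star q:\ ad\equiv 1\ (c_\star q),\ d\equiv c_\star\bar r\ ((q,Q/q))\}$, which is exactly the first displayed formula.

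The second formula then follows by summing over $r\bmod(q,Q/q)$ with $(r,Q)=1$. As $(c_\star,Q/q)=1$, $c_\star$ is a unit modulo $(q,Q/q)$, so as $r$ runs over the units modulo $(q,Q/q)$ the residues $c_\star\bar r$ run over all of them exactly once; since every $d$ with $ad\equiv 1\pmod{c_\star q}$ is automatically coprime to $(q,Q/q)$, summing over $r$ removes the congruence condition on $d$ and reassembles the inner sums into the full classical Kloosterman sum $S(m,n;c_\star q)$, the common factor $(-i)^\kappa\overline{\chi(c_\star)}$ pulling out. I expect the width/CRT bookkeeping of the third paragraph to be the main obstacle; the remaining steps are routine matrix multiplication and elementary number theory.
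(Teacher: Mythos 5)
Your proposal is correct and takes essentially the same route as the paper: an explicit computation of $\sigma_{\infty}^{-1}\Gamma_0(Q)\sigma_{r/q}$ yielding the congruence $d\equiv cy\equiv c\overline{r} \bmod (Q/q)$, reduction of the double-coset parameters to $a \bmod (cq)$ and $d \bmod (cqw_q)$, and a uniqueness-of-lift step to pass from modulus $cqw_q$ down to $cq$, with your CRT argument via $cqw_q=\mathrm{lcm}(cq,Q/q)$ being equivalent to the paper's solving of a linear congruence for $\ell \bmod w_q$ using the invertibility of $c$ and $q/(q,Q/q)$ modulo $w_q$. The character evaluation $\overline{\chi(c)}$ and the final summation over $r$ also match the paper's treatment, so there is nothing to correct.
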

 \begin{proof} A straightforward calculation yields
 	$$
 	\sigma_{\infty}^{-1}\Gamma \sigma_{r/q} = \left\{ \begin{pmatrix} a\sqrt{w_{q}} & b/\sqrt{w_{q}} \\ cq\sqrt{w_{q}} & d/\sqrt{w_{q}}  \end{pmatrix} \in \SL{2}(\mathbb{R}) : a,b,c,d\in \mathbb{Z},  \tfrac{Q}{q} \div cy-d  \right\}.
 	$$
 	These matrices must satisfy $ad-cqb=1$, hence $(c,d)=1$. Furthermore, $c \equiv rd \, \mod(\frac{Q}{q})$, thus $(c,\frac{Q}{q})=1$. In reverse, given $c$ relatively prime to $\frac{Q}{q}$, we may find $d \in \mathbb{Z}$ with $d \equiv cy \, \mod(\frac{Q}{q})$ and $(d,cq)=1$. Subsequently, by B\'ezout, we may find $a,b \in \mathbb{Z}$ such that $ad-cqb=1$. Hence, $\mathcal{C}_{\infty,r/q}$ is of the claimed shape. Inserting into the definition for the Kloosterman sum \eqref{eq:genKloos}, we find
 $$
 		S_{\infty,r/q}^{\chi,\kappa}(m,nw_{q};cq\sqrt{w_{q}}) = (-i)^{\kappa}   \sum_{\substack{a \mod(cq) \\ d \mod(cqw_q) \\ ad \equiv 1 \mod(cq)\\ d \equiv cy \mod(Q/q)}} \overline{\chi(dr-cqx)} e\left( \frac{ma+nd}{cq} \right)
 		$$%
We now have $dr-cqx \equiv c(yr-qx) \equiv c \mod(Q/q)$ and $y \equiv \overline{r} \, \mod(\frac{Q}{q})$. Furthermore, the exponential only depends on $d \, \mod(cq)$ and we claim that there is a unique lift of $d \, \mod(cq)$ satisfying $d\equiv c\overline{r} \, \mod((q,Q/q))$ to $d \, \mod(cqw_q)$ satisfying $d \equiv c\overline{r} \mod(Q/q)$. The latter is true since
 	$$
 	d+\ell cq \equiv c\overline{r} \mod(Q/q) \Leftrightarrow \frac{d-c\overline{r}}{(q,Q/q)} \equiv -\ell c \frac{q}{(q,Q/q)} \mod(w_q)
 	$$
	uniquely determines $\ell \mod(w_q)$ as both $c$ and $q/(q,Q/q)$ are invertible modulo $w_q$. This proves the claimed expression of the Kloosterman sum. The last equality of the lemma is clear.
	\end{proof}

\subsection{{Evaluations for $\Gamma=\Gamma_{0,\pm1}(Q;Q/q)$, $\upsilon=\sign_{Q/q}^{\kappa}$, and $\kappa \in \{0,1\}$}} \label{sec:Klooster-Gamma01-Q-Q/q}

Let $q \div Q$ and if $\frac{Q}{q}=1,2$ assume furthermore $\kappa=0$. Let
$$
\Gamma = \Gamma_{0,\pm 1}(Q;Q/q) = \left\{ \gamma \in \SL{2}(\mathbb{Z}) | \gamma \equiv \sm \ast & \ast \\ 0 & \ast \esm \mod(Q),\ \gamma \equiv \pm \sm 1 & \ast \\ 0 & 1 \esm \mod(\tfrac{Q}{q})\right\}.
$$
For $\gamma=\sm a & b \\ c & d \esm  \in \Gamma$, we set
$$
\sign_{Q/q}^{\kappa}(\gamma)=\sign_{Q/q}^{\kappa}(d) = \begin{cases} 1, & d \equiv 1 \mod(\tfrac{Q}{q}), \\ (-1)^{\kappa}, & d \equiv -1 \mod(\tfrac{Q}{q}). \end{cases}
$$
As in \S\ref{sec:Klos-Gamma_0Q}, we shall be considering the cusps $\infty$ and $r/q$ with $(r,Q)=1$, $r \mod(q,Q/q)$. Since the generator $\gamma_{r/q}$ of the stabiliser of the cusp $r/q$ in $\Gamma_0(Q)$ is already contained in $\Gamma_{0,\pm1}(Q,Q/q)$ it follows, that the width of the cusp $r/q$ is the same with respect to either group, i.e.\@ equal to $w_q$. The same goes for the cusp $\infty$. Thus, we may choose the same scaling matrices, i.e.\@ $\sigma_{\infty}=I$ and $\sigma_{r/q}$ given by \eqref{eq:sigma-r/q}.

\begin{lem} With the above choices, we have
	$\mathcal{C}_{\infty,\infty}=Q\mathbb{N}$ and for $c \in \mathcal{C}_{\infty,\infty}$:
	\begin{equation*}
		S^{\sign_{Q/q}^{\kappa},\kappa}_{\infty,\infty}(m,n;c) = (-i)^{\kappa} \sum_{\substack{ad \equiv 1 \mod(c)\\ a \equiv d \equiv \pm 1 \mod(\frac{Q}{q}) }} \sign_{Q/q}^{\kappa}(d) e\left( \frac{ma+nd}{c} \right).
	\end{equation*}
We also have $\mathcal{C}_{r/q,r/q}=Q\mathbb{N}$ and for $cqw_q \in Q \mathbb{N}$
\begin{multline}
S_{r/q,r/q}^{\sign_{Q/q}^{\kappa},\kappa}(mw_q,nw_q;cqw_q) = (-i)^{\kappa} (c,w_q) \\ \times  \sum_{\substack{a,d \mod (cq) \\ a+cy \equiv d-cy \equiv \pm 1  \mod((cq,Q/q)) \\ ad \equiv 1 \mod(cq)}} 
 \sign_{(cq,Q/q)}^{\kappa}(a+cy) e\left( \frac{ma+nd}{cq} \right) 
\label{eq:Kloos-Gamma01-Q-Q/q-indv}
\end{multline}
and
\begin{multline*}
\sum_{\substack{r \mod((q,Q/q))\\ (r,Q)=1}}S_{r/q,r/q}^{\sign_{Q/q}^{\kappa},\kappa}(mw_q,nw_q;cqw_q) = (-i)^{\kappa} (c,w_q) \frac{\varphi((q,Q/q))}{\varphi\left(\left(q, \frac{Q}{q(c,Q/q)} \right)\right)}  \\
\times \sum_{(c,\frac{Q}{q}) \div f \div (cq,\frac{Q}{q})} \mu\left(\frac{f}{(c,Q/q)}\right) \sum_{\substack{a,d \mod (cq) \\ a\equiv d \equiv \pm 1  \mod(f) \\ ad \equiv 1 \mod(cq)}} 
\sign_{f}^{\kappa}(a) e\left( \frac{ma+nd}{cq} \right)
\end{multline*}

	\label{lem:Kloos-Gamma01-Q-Q/q}
\end{lem}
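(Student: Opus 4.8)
The plan is to follow the template of Lemma~\ref{lem:Gamma0Q-Kloosterman}: compute the double coset sets $\sigma_{\mathfrak a}^{-1}\Gamma\sigma_{\mathfrak b}$ explicitly and feed them into the definition~\eqref{eq:genKloos}. Since $\Gamma_{0,\pm1}(Q;Q/q)\subseteq\Gamma_0(Q)$ shares the stabiliser generators $\gamma_{r/q}$ and $\gamma_\infty$, both cusps stay singular, so every cusp parameter vanishes and the prefactor is just $e^{-\pi i\kappa/2}=(-i)^\kappa$. The pair $(\infty,\infty)$ is immediate: here $\sigma_\infty^{-1}\Gamma\sigma_\infty=\Gamma$, whose lower-left entries are precisely $Q\mathbb{N}$ (containment is the condition $c\equiv0\bmod Q$; surjectivity follows by B\'ezout after fixing $a\equiv d\equiv1\bmod\tfrac{Q}{q}$). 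Passing to $B\backslash\Gamma/B$, the classes with fixed $c$ are parametrised by $a,d\bmod c$ with $ad\equiv1\bmod c$; reading off the membership condition $a\equiv d\equiv\pm1\bmod\tfrac{Q}{q}$ together with the multiplier $\sign_{Q/q}^\kappa(d)$ gives the first display at once.

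For $(r/q,r/q)$ I write $\sigma_{r/q}=g\,\mathrm{diag}(\sqrt{w_q},1/\sqrt{w_q})$ with $g=\sm r & x\\ q & y\esm$ as in \eqref{eq:sigma-r/q}, so that conjugation gives $\sigma_{r/q}^{-1}\gamma\sigma_{r/q}=\sm\tilde a & \tilde b/w_q\\ \tilde c\,w_q & \tilde d\esm$, where $\sm\tilde a&\tilde b\\ \tilde c&\tilde d\esm=g^{-1}\gamma g$. A direct computation for $\gamma=\sm A&B\\ C&D\esm\in\Gamma$ yields $\tilde c=qr(D-A)+r^2C-q^2B$; since $A\equiv D\bmod\tfrac{Q}{q}$ and $C\equiv0\bmod Q$, this lies in $Q\mathbb{Z}+q^2\mathbb{Z}=(Q,q^2)\mathbb{Z}$, whence the lower-left entry $\tilde c\,w_q$ lies in $Q\mathbb{Z}$ as $w_q=Q/(Q,q^2)$. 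The reverse inclusion $\mathcal{C}_{r/q,r/q}=Q\mathbb{N}$ is a B\'ezout/CRT realisation. Writing the modulus as $cqw_q$, i.e.\@ $\tilde c=cq$, the determinant relation forces $\tilde b=(\tilde a\tilde d-1)/(cq)$, hence $\tilde a\tilde d\equiv1\bmod cq$, and the exponential becomes $e((m\tilde a+n\tilde d)/(cq))$, so $a=\tilde a$, $d=\tilde d$ are the summation variables.

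The crux of the individual formula is to rewrite the membership conditions $A\equiv D\equiv\pm1\bmod\tfrac{Q}{q}$ in terms of $a,d$. Computing $A,D$ from $\gamma=g\tilde\gamma g^{-1}$ and using $\tfrac{Q}{q}\mid x$ together with $ry\equiv1\bmod Q$ gives $A\equiv\tilde a-qr\tilde b$ and $D\equiv\tilde d+qr\tilde b\bmod\tfrac{Q}{q}$; the condition $C\equiv0\bmod Q$ reads $q\tilde b\equiv y(\tilde a-\tilde d)+y^2c\bmod\tfrac{Q}{q}$, and substituting yields the clean identities $A\equiv d-cy$ and $D\equiv a+cy\bmod\tfrac{Q}{q}$. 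Thus the membership conditions become $a+cy\equiv d-cy\equiv\pm1$ and the multiplier becomes $\sign^\kappa(a+cy)$, visible only modulo $(cq,\tfrac{Q}{q})$ because $a,d$ are taken modulo $cq$. Finally I pass from the double-coset variables $\tilde a,\tilde d\bmod cqw_q$ to $a,d\bmod cq$: the summand depends on the coarser residues only, and counting the fibres determined by $C\equiv0\bmod Q$ exactly as in Lemma~\ref{lem:Gamma0Q-Kloosterman} is now $(c,w_q)$-to-one rather than one-to-one, producing the factor $(c,w_q)$ in \eqref{eq:Kloos-Gamma01-Q-Q/q-indv}.

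The summed formula is where I expect the main difficulty. The $r$-dependence of \eqref{eq:Kloos-Gamma01-Q-Q/q-indv} enters only through $cy\equiv c\bar r\bmod(cq,\tfrac{Q}{q})$, since $y\equiv\bar r\bmod\tfrac{Q}{q}$. For fixed admissible $a,d$ I would expand the two conditions $a+c\bar r\equiv\epsilon$, $d-c\bar r\equiv\epsilon\bmod(cq,\tfrac{Q}{q})$ over the common sign $\epsilon=\pm1$, weighted by $\epsilon^\kappa$, and sum over the units $r\bmod(q,\tfrac{Q}{q})$ with $(r,Q)=1$. Eliminating $\bar r$ forces $a\equiv\epsilon\bmod(c,\tfrac{Q}{q})$ and reduces the inner sum to counting units $\bar r$ with $c\bar r\equiv\epsilon-a\bmod(cq,\tfrac{Q}{q})$; resolving this count, and detecting by M\"obius inversion the exact modulus $f$, with $(c,\tfrac{Q}{q})\mid f\mid(cq,\tfrac{Q}{q})$, at which the shifted conditions collapse to honest congruences $a\equiv d\equiv\pm1\bmod f$, produces the weight $\mu(f/(c,Q/q))$, while counting the coprime lifts of $\bar r$ gives the totient ratio $\varphi((q,\tfrac{Q}{q}))/\varphi((q,\tfrac{Q}{q(c,Q/q)}))$. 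Disentangling the sign $\epsilon$, the shift $c\bar r$, and the coprimality constraint $(r,Q)=1$ simultaneously is the delicate point; the rest is bookkeeping.
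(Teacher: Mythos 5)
Your proposal follows the paper's own proof essentially step for step: the same conjugation computation, the same reformulation of membership as $A \equiv d-cy$, $D \equiv a+cy \pmod{Q/q}$ together with $Q \mid C$, the same $(c,w_q)$-to-one fibre count when passing from residues mod $cqw_q$ to residues mod $cq$, and the same strategy for the sum over $r$ (a unit count giving the totient ratio, plus M\"obius inversion over the modulus $f$). The steps you defer --- the B\'ezout/CRT realisation of $Q\mathbb{N} \subseteq \mathcal{C}_{r/q,r/q}$, the fibre count via the joint condition $(a+cy)(d-cy)\equiv 1 \pmod{c\tfrac{Q}{q}}$, and the disentangling of sign, shift, and coprimality --- are exactly where the paper's proof spends its effort, and your asserted outcomes for each agree with the paper's.
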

\begin{proof}
	The claim for the cusp pair $(\infty, \infty)$ follows straight from the definitions. Let us now consider the cusp pair $(r/q,r/q)$. We have that
	\begin{multline*}
	\sigma_{r/q}^{-1} \begin{pmatrix} A & B \\ CQ & D \end{pmatrix} \sigma_{r/q} \\= \begin{pmatrix} Ary+Bqy-CQrx-Dqx & (Axy+By^2-CQx^2-Dxy)/w_q \\ (-Aqr-Bq^2+CQr^2+Dqr)w_q & -Aqx-Bqy+CQrx+Dry \end{pmatrix},
	\end{multline*}
	which is of the shape $\sm a & b/w_q \\ cqw_q & d \esm$ with $a,b,c,d \in \mathbb{Z}$ with $ad-cqb=1$. We claim furthermore that the conditions
	\begin{equation}
	\tfrac{Q}{q} \div (a-d)y-bq+cy^2
	\label{eq:lower-left-div}
\end{equation}
and	
\begin{equation}
	a+cy \equiv d-cy \equiv \pm 1 \mod(\tfrac{Q}{q})
	\label{eq:a-d-congruence}
\end{equation}	
	are necessary and sufficient. A short calculation reveals that they are indeed necessary. In order to show that they are sufficient, we compute
	$$
	\sigma_{r/q}\begin{pmatrix} a & b/w_q \\ cqw_q & d \end{pmatrix} \sigma_{r/q}^{-1} = \begin{pmatrix} ary-bqr+cqxy-dqx & -arx+br^2-cqx^2+drx \\ aqy-bq^2+cqy^2-dqy & -aqx+bqr-cqxy+dry \end{pmatrix}.
	$$
	The condition \eqref{eq:lower-left-div} shows that the lower left entry is divisible by $Q$. The conditions \eqref{eq:a-d-congruence} imply that the top left, respectively, bottom right entry satisfy
	\begin{multline*}
		ary-bqr+cqxy-dqx \\ \equiv ary-(ay+cy^2-dy)r+cqxy-dqx   
		\equiv d-cy  \equiv \pm 1, \ \mod(\tfrac{Q}{q})
	\end{multline*}
and
	\begin{multline*}
 -aqx+bqr-cqxy+dry  \\ \equiv  -aqx+(ay+cy^2-dy)r-cqxy+dry \equiv a+cy \equiv \pm 1, \ \mod(\tfrac{Q}{q}),
	\end{multline*}
where the choice of sign is the same. We note that \eqref{eq:lower-left-div} and \eqref{eq:a-d-congruence} together imply $(q,Q/q) \div cy^2$ and since $(y,Q)=1$ even $(q,Q/q) \div c$. This shows $\mathcal{C}_{r/q,r/q} \subseteq Q\mathbb{N}$. On the other hand for $cqw_q \in Q\mathbb{N}$, we may find $a \in \mathbb{Z}$ satisfying $a \equiv \pm 1-cy \ \mod(\frac{Q}{q})$ and $(a,cq)=1$, since for any prime $p \div (c,Q/q)$ we have $a \equiv \pm 1 \ \mod(p)$ and for any prime $p \div (q,Q/q)$ we have $a \equiv \pm1 \mp (1-ry) \equiv \pm ry \equiv \pm 1 \ \mod(p)$. We may now find integers $b,d$ such that $ad-cbq = 1$. Any other solution of the latter is of the shape $b+\ell a, d+\ell cq$ with $\ell \in \mathbb{Z}$. We have that $d \equiv \overline{a} \equiv a \mod((q,Q/q))$, since \eqref{eq:a-d-congruence} implies $a \equiv \pm 1 \mod((q,Q/q))$. Therefore,
$$
	ay-bq+cy^2-dy \equiv 0  \mod((q,Q/q)).
$$
Thus,
\begin{equation*}
	\tfrac{Q}{q} \div ay-(b+\ell a)q+cy^2-(d+\ell cq)y
	\Leftrightarrow \tfrac{Q}{q(q,Q/q)} \div \tfrac{ay-bq+cy^2-dy}{(q,Q/q)}-\ell \tfrac{q}{(q,Q/q)} (a+cy)
\end{equation*}
is solvable for $\ell$. Hence, $\mathcal{C}_{r/q,r/q}=Q \mathbb{N}$.

For $cqw_q \in \mathcal{C}_{r/q,r/q}$, we may explicate the definition of the Kloosterman sum to find
$$
S^{\sign_{Q/q}^{\kappa},\kappa}_{r/q,r/q}(mw_q,nw_q;cqw_q) = (-i)^{\kappa} \sum_{\substack{a,d \mod(cqw_q)\\ a+cy \equiv d-cy \equiv \pm 1 \mod(Q/q) \\ ad \equiv 1 \mod(cq) \\ \frac{Q}{q} \div cy^2 +ay-dy+\frac{1-ad}{c}}} \sign_{Q/q}^{\kappa}(a+cy) e\left( \frac{ma+nd}{cq} \right).
$$
The last condition in the sum may be rewritten as $(a+cy)(d-cy) \equiv 1 \ \mod(c \tfrac{Q}{q})$. Next, we claim that for $\epsilon \in \{\pm 1\}$ and $cqw_q \in \mathcal{C}_{r/q,r/q}$ the map
\begin{equation}
\begin{Bmatrix} a,d \ \mod (cqw_q) \\ a+cy \equiv d-cy \equiv \epsilon \ \mod(Q/q) \\
ad \equiv 1 \ \mod(cq) \\ (a+cy)(d-cy) \equiv 1 \ \mod(c\tfrac{Q}{q})
\end{Bmatrix} \mapsto \begin{Bmatrix} a,d \ \mod (cq) \\ a+cy \equiv d-cy \equiv \epsilon \ \mod((cq,Q/q)) \\
ad \equiv 1 \ \mod(cq) \\ (a+cy)(d-cy) \equiv 1 \ \mod(c(q,Q/q))
\end{Bmatrix}
\label{eq:congruence-map}
\end{equation}
is surjective and $(c,w_q)$ to $1$. Let $a,d$ be two representatives modulo $cq$. We shall write $a+\ell_1cq$ and $d+\ell_2cq$ and count how many $\ell_1,\ell_2 \ \mod(w_q)$ satisfy the congruences on the left-hand side of \eqref{eq:congruence-map}. The congruence $ad \equiv 1 \ \mod(cq)$ is clearly satisfied.
$$
a+cy+\ell_1cq \equiv \epsilon \ \mod(Q/q) \Leftrightarrow -\frac{a+cy-\epsilon}{(cq,Q/q)} \equiv \ell_1 \frac{cq}{(cq,Q/q)} \ \mod\left( \frac{w_q}{(c,w_q)} \right)
$$
shows that $\ell_1$ is uniquely defined modulo $w_q/(c,w_q)$. Thus, upon redefining $a$ we may assume that $a+cy \equiv \epsilon \ \mod(Q/q)$ and consider $a+\ell_1cq\frac{w_q}{(c,w_q)}$ with $\ell_1$ modulo $(c,w_q)$. We may argue similarly for $\ell_2$. Since $[c(q,Q/q),Q/q]=c\tfrac{Q}{q}/(c,w_q)$, we have
\begin{multline*}
	\left(a+cy+\ell_1cq\tfrac{w_q}{(c,w_q)}\right)\left(d-cy+\ell_2cq\tfrac{w_q}{(c,w_q)}\right) \equiv 1 \ \mod(c\tfrac{Q}{q}) \\
	\Leftrightarrow \frac{(a+cy)(d-cy)-1}{c\tfrac{Q}{q}/(c,w_q)}+\left( (d-cy) \ell_1 +(a+cy)\ell_2 \right) \frac{q^2}{(q^2,Q)}+ \ell_1\ell_2 cq \frac{w_q}{(c,w_q)}\frac{q^2}{(q^2,Q)}  \equiv 0 \ \mod((c,w_q)) \\
	\Leftrightarrow \frac{(a+cy)(d-cy)-1}{c\tfrac{Q}{q}/(c,w_q)}+\epsilon (\ell_1+\ell_2) \frac{q^2}{(q^2,Q)}  \equiv 0 \ \mod((c,w_q)).
\end{multline*}
Now, $q^2/(q^2,Q)=q/(q,Q/q)$ and $w_q=Q/(q(q,Q/q))$ are relatively prime, from which we deduce the claim. Finally, using $ad \equiv 1 \ \mod(cq)$ and $(q,Q/q) \div c$, we have
$$
(a+cy)(d-cy) \equiv 1 \ \mod(c(q,Q/q)) \Leftrightarrow a \equiv d \ \mod((q,Q/q)),
$$
which is implied by the condition $a+cy \equiv d-cy \equiv \pm 1 \mod((cq,Q/q))$. Hence, the expression of the Kloosterman sum in the lemma.

To prove the last equality, we first note that the Kloosterman sum \eqref{eq:Kloos-Gamma01-Q-Q/q-indv} only depends on $y$ (respectively $r$) modulo $(q,\tfrac{Q}{q(c,Q/q)}) \div (q,Q/q)$. We further claim that for $\epsilon \in \{\pm 1\}$ and $(q,Q/q) \div c$ the map
\begin{equation}
	\begin{Bmatrix} a,d \ \mod (cq) \\ ad \equiv 1 \ \mod(cq) \\ \exists y \in (\mathbb{Z} / (q,\tfrac{Q}{q(c,Q/q)})\mathbb{Z})^{\times}:\\
		 a+cy \equiv d-cy \equiv \epsilon \ \mod((cq,Q/q))
	\end{Bmatrix}
	\mapsto
	\begin{Bmatrix} a,d \ \mod (cq) \\ ad \equiv 1 \ \mod(cq) \\ a \equiv d \equiv \epsilon \ \mod((c,Q/q)) \\
		\left(\frac{a-\epsilon}{(c,Q/q)} , (q, \frac{Q}{q(c,Q/q)}) \right) =1
	\end{Bmatrix}
	\label{eq:congruence-map-2}
\end{equation}
is one-to-one. This map is well-defined since $$\frac{a-\epsilon}{(c,Q/q)} \equiv -\frac{c}{(c,Q/q)} y \ \mod\left( (q, \tfrac{Q}{q(c,Q/q)}) \right)$$
and the right-hand side is invertible. We shall now show that the map is bijective. Let $a, d$ be in the co-domain. We may write $a+cy \equiv d-cx \equiv \epsilon \ \mod((cq,Q/q))$ with $x,y \ \mod((q,\tfrac{Q}{q(c,Q/q)}))$ and $(y,(q,\tfrac{Q}{q(c,Q/q)}))=1$. We find
$$\begin{aligned}
1 \equiv \epsilon^2 \equiv (a+cy)(d-cx) &\equiv ad +c(dy-ax)-c^2xy &\ \mod((cq,Q/q)) \\
&\equiv 1 +c(dy-ax) &\ \mod((cq,Q/q))
\end{aligned}$$
as $(cq,Q/q) \div c (q,Q/q) \div c^2$. Hence, $x \equiv \epsilon dx \equiv \epsilon dy \equiv y \ \mod((q,\tfrac{Q}{q(c,Q/q)}))$ since we have $(q,\tfrac{Q}{q(c,Q/q)}) \div (q,Q/q) \div (c,Q/q)$. In order to conclude the lemma, we resolve the condition $\left(\frac{a-\epsilon}{(c,Q/q)} , (q, \frac{Q}{q(c,Q/q)}) \right) =1$ via M\"obius inversion.

\end{proof}

\section{Bounds for Kloosterman sums} \label{sec:kloosbound}
 The classical Kloosterman sums satisfy the Weil bound
\begin{equation}
	|S(m,n;c)| \le \tau(c) (m,n,c)^{\frac{1}{2}}c^{\frac{1}{2}} \tag{\ref{eq:Ex-Weilbound}},
\end{equation}
see \cite[Thm 9.2]{Knightly-Li}. The goal of this section is to establish similar bounds for the Kloosterman sums arising in \S\ref{sec:Klooster-Gamma01-Q-Q/q}. We require a bound for Gau{\ss} sums.

\begin{lem} Let $a,b,c \in \mathbb{Z}$ with $c>0$. Then, the Gau{\ss} sum
	$$
	\mathcal{G}(a,b;c) = \sum_{n \mod(c)} e \left( \frac{an^2+bn}{c} \right)
	$$
	is $0$ unless $(a,c) \div b$, in which case one has
	$$
	\mathcal{G}(a,b;c) = (a,c) \mathcal{G}(a/(a,c),b/(a,c);c/(a,c)).
	$$
	Furthermore, we have the general bound
	$$
	|\mathcal{G}(a,b;c)| \le (a,c)^{1/2}c^{1/2} \begin{cases} 1, & 2 \notdiv c/(a,c), \\ 2^{1/2}, & 2 \div c/(a,c). \end{cases}
	$$
	\label{lem:Gauss}
\end{lem}
\begin{proof} Let $d=(a,c)$ and write $a=a'd$, $c=c'd$. We further decompose $n= m+\ell c'$ with $m \, \mod(c')$ and $\ell \, \mod(d)$. Thus,
	$$
	\mathcal{G}(a,b;c) = \sum_{m \mod(c')} e\left(\frac{a'm^2}{c'}+\frac{bm}{c}\right) \sum_{\ell \mod(d)} e\left(\frac{b\ell}{d}\right) = \begin{cases} 0, & d \notdiv b, \\ d \mathcal{G}(a',b/d;c'), & d \div b. \end{cases}
	$$
	For $(a,c)=1$, the Gau{\ss} sums may be evaluated explicitly (cf. \cite[Lem 3]{BaiBrow}), from which one may deduce the bound
	$$
	|\mathcal{G}(a,b;c)| \le \begin{cases} \sqrt{c}, & 2 \notdiv c, \\ \sqrt{2c}, & 2 \div c. \end{cases}
	$$
\end{proof}

\begin{lem} Let $m,n,c \in \mathbb{Z}$ be three integers with $c>0$. Further, let $q \div c$ be a divisor, $f \in \mathbb{Z}$ with $(f,q)=1$, and $\kappa \in \{0,1\}$ such that $\kappa=0$ if $q=1$ or $2$. Then, the Kloosterman sum
	$$
	T_{f}(m,n;q \div c) = \starsum_{\substack{a \mod(c) \\ a \equiv f \mod(q)}} e \left( \frac{ma+n\overline{a}}{c} \right)
	$$
	satisfies the bound
	$$
	|T_f(m,n;q \div c)| \le 2^{3/2} \tau(c) \min\left\{\tfrac{c}{q}, (\tfrac{c}{q},m,n)^{1/2} c^{1/2}\right\}.
	$$
	Here, and throughout the $\star$ in the summation indicates that the sum is over residues coprime to the modulus, i.e.\@ $(a,c)=1$ in this case.
	\label{lem:Klooster-T-bound}
\end{lem}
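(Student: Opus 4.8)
The plan is to peel off the part of the modulus coprime to $q$ (handled by Weil), reduce the rest to prime powers by twisted multiplicativity, and evaluate each remaining local factor by $p$-adic stationary phase, calling on Lemma~\ref{lem:Gauss} exactly when a quadratic Gau{\ss} sum survives. The weight $\kappa$ does not enter the sum $T_f$ and may be ignored throughout.

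I would begin with the trivial bound: there are at most $c/q$ residues $a \bmod c$ with $a \equiv f \bmod q$, so $|T_f(m,n;q\div c)| \le c/q$, which supplies the first term of the minimum. For the second, write $c=c_1c_2$ where $c_1$ collects the primes of $c$ not dividing $q$ and $c_2$ those dividing $q$, so $(c_1,c_2)=1$ and $q\div c_2$. Using $\tfrac{1}{c_1c_2}\equiv \tfrac{\overline{c_2}}{c_1}+\tfrac{\overline{c_1}}{c_2} \bmod 1$, the Chinese Remainder Theorem factors the phase and the congruence $a\equiv f\bmod q$, exhibiting $T_f(m,n;q\div c)$ as $S(m',n';c_1)$ times a sum of the same type to modulus $c_2$; the first factor is an ordinary Kloosterman sum, bounded by~\eqref{eq:Ex-Weilbound}. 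Since $\tau$, the quantities $(\cdot,m,n)^{1/2}c^{1/2}$ and $c/q$, and hence their minimum are multiplicative in this splitting (using $\prod_p \min\{A_p,B_p\}\le \min\{\prod_p A_p,\prod_p B_p\}$), it suffices to prove the local bound for $c=p^k$, $q=p^j$ with $1\le j\le k$.

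Fix such a prime power and set $\ell=\lceil k/2\rceil$. If $j\ge \ell$, the substitution $a=f(1+p^jt)$ with $t\bmod p^{k-j}$ linearises the phase completely: expanding $\overline a=\overline f\sum_{i\ge 0}(-p^jt)^i$, the term of degree $i\ge 2$ carries the denominator $p^{k-ij}$ with $ij\ge 2j\ge k$ and hence disappears, so $T_f$ is $e((mf+n\overline f)/p^k)$ times a complete linear sum of modulus $0$ or $p^{k-j}$; as $j\ge \ell$ gives $p^{k-j}\le p^{k/2}$, this meets the claim. If $j<\ell$, I would instead substitute $a=a_0+p^\ell u$ with $(a_0,p)=1$, $a_0 \bmod p^\ell$, $u\bmod p^{k-\ell}$. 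Because $2\ell\ge k$ one has $\overline a\equiv \overline{a_0}-p^\ell u\,\overline{a_0}^2\bmod p^k$, so the $u$-sum is a pure additive character sum and collapses to the stationary-phase condition $m\equiv n\overline{a_0}^2\bmod p^{k-\ell}$; moreover $j\le k-\ell$, so $a\equiv f\bmod q$ is simply the constraint $a_0\equiv f\bmod p^j$ and leaves the $u$-sum untouched. For even $k$ this pins $a_0\bmod p^\ell$ and reduces $T_f$ to a count of the solutions of $a_0^2\equiv n\overline m$ meeting $a_0\equiv f\bmod p^j$; for odd $k$ the residual sum over the top $p$-adic digit of $a_0$ is a genuine quadratic Gau{\ss} sum to modulus $p$, which I bound by Lemma~\ref{lem:Gauss}. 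In either case the number of admissible $a_0$, together with the Gau{\ss}-sum and $p=2$ factors, assembles into $C_p\min\{p^{k-j},(p^{k-j},m,n)^{1/2}p^{k/2}\}$ with $\prod_p C_p\le 2^{3/2}\tau(c)$; when $p\div m$ or $p\div n$ one first extracts the common power of $p$, which shrinks the effective modulus and is exactly what manufactures the factor $(p^{k-j},m,n)$.

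The hard part will be this prime-power analysis in the range $j<\ell$: one must follow the critical points $a_0^2\equiv n\overline m$ simultaneously with the congruence $a_0\equiv f\bmod p^j$ and, for odd $k$, with the surviving Gau{\ss} sum, while the degenerate cases $p\div (m,n)$ shift the effective modulus and produce the gcd. Marshalling all of these—including the $\sqrt 2$ losses at $p=2$ from Lemma~\ref{lem:Gauss} and the at most two (for odd $p$) critical points—into the single clean constant $2^{3/2}$ and the one divisor factor $\tau(c)$ is the delicate bookkeeping; the multiplicative reassembly and the trivial bound are then routine.
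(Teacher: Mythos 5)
Your proposal is correct and follows essentially the same route as the paper: twisted multiplicativity to reduce to prime powers $c=p^k$, $q=p^j$ (with the Weil bound handling the part of $c$ coprime to $q$), the same case split according to $2j\ge k$ or $2j<k$, complete linearisation in the first case, and $p$-adic stationary phase with the Gau{\ss}-sum lemma plus a count of critical points in the second, extracting common powers of $p$ from $m,n$ to manufacture the gcd factor. The only organisational difference is in the stationary-phase case: you split at the midpoint $a=a_0+p^{\lceil k/2\rceil}u$ and impose $a_0\equiv f \bmod p^j$ on the critical points (so your residual Gau{\ss} sum is at most mod $p$), whereas the paper parametrises the progression $a=f+tp^j$ first and then splits, producing Gau{\ss} sums of possibly larger modulus; both lead to the same local bounds.
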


\begin{rem} The constant $2^{3/2}$ can likely be improved by some more careful analysis at everybody's favourite even prime.
\end{rem}

\begin{cor} We have
	$$
	|S^{\Gamma_0(Q),\sign_{Q/q}^{\kappa},\kappa}_{\infty,\infty}(m,n;c)| \le 2^{5/2} \tau(c) \min\left\{\tfrac{c}{Q/q} ,  (\tfrac{c}{Q/q},m,n)^{1/2} c^{1/2}\right\} .
	$$
	and
	\begin{multline*}
	\left |\sum_{\substack{r \mod((q,Q/q))\\ (r,Q)=1}}S_{r/q,r/q}^{\Gamma_{0,\pm1}(Q;Q/q),\sign_{Q/q}^{\kappa},\kappa}(mw_q,nw_q;cqw_q) \right | \\
	 \ll (cQ)^{o(1)} (c,Q/q) \min\left\{ \tfrac{cq}{(c,Q/q)} , \left(\tfrac{cq}{(c,Q/q)},m,n\right)^{1/2} (cq)^{1/2} \right\}.
	\end{multline*}
	\label{cor:bound-kloosterman-after-CS}
\end{cor}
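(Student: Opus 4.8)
The plan is to derive both estimates from the twisted Kloosterman sum bound of Lemma~\ref{lem:Klooster-T-bound}, applied to the closed forms supplied by Lemma~\ref{lem:Kloos-Gamma01-Q-Q/q}; the entire argument is a decomposition into the sums $T_{\pm 1}$ followed by a prefactor bookkeeping.

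For the first estimate I would start from the $(\infty,\infty)$ evaluation in Lemma~\ref{lem:Kloos-Gamma01-Q-Q/q}. Since $\mathcal{C}_{\infty,\infty}=Q\mathbb{N}$ we have $\tfrac{Q}{q}\mid c$, and writing $d\equiv\overline{a}\pmod{c}$ the constraint $a\equiv d\equiv\pm1\pmod{Q/q}$ splits the sum into its two sign classes. As $\sign_{Q/q}^{\kappa}(d)$ equals $1$ on the class $a\equiv1$ and $(-1)^{\kappa}$ on the class $a\equiv-1$, the evaluation becomes exactly
$$
S^{\sign_{Q/q}^{\kappa},\kappa}_{\infty,\infty}(m,n;c)=(-i)^{\kappa}\bigl(T_{1}(m,n;\tfrac{Q}{q}\mid c)+(-1)^{\kappa}T_{-1}(m,n;\tfrac{Q}{q}\mid c)\bigr).
$$
The triangle inequality together with Lemma~\ref{lem:Klooster-T-bound} (applied with divisor $Q/q$ and residues $\pm1$, whose hypothesis $\kappa=0$ for $Q/q\in\{1,2\}$ is precisely the standing assumption of \S\ref{sec:Klooster-Gamma01-Q-Q/q}) then produces the constant $2\cdot2^{3/2}=2^{5/2}$ and the claimed minimum.

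For the second estimate I would begin from the averaged evaluation (the final display of Lemma~\ref{lem:Kloos-Gamma01-Q-Q/q}). For each admissible $f$ with $(c,\tfrac{Q}{q})\mid f\mid(cq,\tfrac{Q}{q})$, the inner sum over $a,d\bmod cq$ with $d\equiv\overline{a}$ again splits by sign class into $T_{1}(m,n;f\mid cq)+(-1)^{\kappa}T_{-1}(m,n;f\mid cq)$; in the edge case $f\in\{1,2\}$ one has $T_{1}=T_{-1}$, so for $\kappa=1$ the term vanishes and may be discarded, while for $\kappa=0$ Lemma~\ref{lem:Klooster-T-bound} applies directly. Hence each inner sum is $\ll\tau(cq)\min\{\tfrac{cq}{f},(\tfrac{cq}{f},m,n)^{1/2}(cq)^{1/2}\}$. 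Because $(c,\tfrac{Q}{q})\mid f$, the integer $\tfrac{cq}{f}$ divides $\tfrac{cq}{(c,Q/q)}$, so every $f$-term is dominated by the $f=(c,\tfrac{Q}{q})$ term, and there are at most $\tau(cq)=(cQ)^{o(1)}$ admissible $f$.

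It then remains to absorb the prefactor $(c,w_q)\,\varphi((q,\tfrac{Q}{q}))/\varphi((q,\tfrac{Q}{q(c,Q/q)}))$ into $(cQ)^{o(1)}(c,\tfrac{Q}{q})$, and this is the step I expect to be the \emph{main obstacle}. It is an elementary but fiddly prime-by-prime computation resting on $w_q=\tfrac{Q}{q(q,Q/q)}$, in which at each prime dividing $Q$ the contribution of $(c,w_q)$ and of the ratio of totients must be matched against the corresponding power of $(c,\tfrac{Q}{q})$. Granting this, multiplying the prefactor bound by the dominant $f=(c,\tfrac{Q}{q})$ term and by the $(cQ)^{o(1)}$ count of admissible $f$ yields the stated estimate.
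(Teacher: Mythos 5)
Your proposal is correct and is essentially the paper's own (implicit) argument: the corollary is stated without separate proof precisely because it follows by splitting the evaluations of Lemma \ref{lem:Kloos-Gamma01-Q-Q/q} into the two sign classes $T_{\pm 1}$ and invoking Lemma \ref{lem:Klooster-T-bound}, exactly as you do, with the M\"obius factor bounded trivially by $1$, the number of admissible $f$ bounded by $\tau(cq)$, and each $f$-term dominated by the $f=(c,Q/q)$ term since $\tfrac{cq}{f} \mid \tfrac{cq}{(c,Q/q)}$. The step you defer as the ``main obstacle'' in fact closes cleanly and needs no $(cQ)^{o(1)}$: since $(q,\tfrac{Q}{q(c,Q/q)})$ divides $(q,Q/q)$, the inequality $\varphi(n)/\varphi(d)\le n/d$ for $d\mid n$ reduces it to the identity $(c,w_q)\,(q,Q/q)=(c,Q/q)\,\bigl(q,\tfrac{Q}{q(c,Q/q)}\bigr)$, which is verified prime by prime (with $\alpha=v_p(q)$, $\beta=v_p(Q/q)$, $\gamma=v_p(c)$ and $v_p(w_q)=\beta-\min\{\alpha,\beta\}$, both sides have $p$-valuation $\beta$ when $\gamma\ge\beta$ or $\alpha\ge\beta$ together with the remaining case giving $\gamma+\alpha$ or $\beta$, and all cases agree).
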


\begin{proof}[Proof of Lemma \ref{lem:Klooster-T-bound}] Let us first note that for $q=1,2$ we have that
	$$
	T_f(m,n; q \div c) = S(m,n;c),
	$$
	the classical Kloosterman sum, to which we may apply the Weil bound \eqref{eq:Ex-Weilbound}. Furthermore, we have the following twisted multiplicativity relation: for $c=rs$ composite with $(r,s)=1$, we have
	$$
	T_f(m,n;q \div c) = T_f(m\overline{s},n\overline{s};(q,r)\div r) T_f(m\overline{r},n\overline{r};(q,s) \div s),
	$$
	where $\overline{r}r+\overline{s}s=1$. Thus, it suffices to look at the case $c=p^{\alpha}$, a power of a prime, and $q=p^{\gamma}$ with $\gamma \ge 1$. Let $p^{\delta}=(m,n,p^{\alpha})$, then
	$$
	T_f(m,n; p^{\gamma} \div p^{\alpha}) = p^{\min\{\alpha-\gamma,\delta\}} T_f(m/p^{\delta},n/p^{\delta}; p^{\min\{\alpha-\delta,\gamma\}} \div p^{\alpha-\delta}).
	$$
	Thus, we may additionally assume that $(m,n,p)=1$. For $\alpha\le 2 \gamma$, we have
	\begin{multline*}
	T_f(m,n;p^{\gamma} \div p^{\alpha}) = e\left(\frac{mf+n\overline{f}}{p^{\alpha}}\right) \sum_{\ell \mod(p^{\alpha-\gamma})} e\left(\frac{(m-n\overline{f}^2)\ell}{p^{\alpha-\gamma}}\right) \\
	 = \begin{cases} 0, & p^{\alpha-\gamma} \notdiv f^2m-n, \\ p^{\alpha-\gamma} e\left(\frac{mf+n\overline{f}}{p^{\alpha}}\right), & p^{\alpha-\gamma} \div f^2m-n. \end{cases}
	\end{multline*}
	We may assume now that $\alpha \ge 2\gamma+1$. We shall write $\alpha=2 \beta+\gamma+\epsilon$ with $\epsilon \in \{0,1\}$. We remark, that the assumption $\alpha \ge 2 \gamma$ implies $\beta \ge \gamma/2$ and thus $\beta \ge 1$. We may first write
	$$
	T_f(m,n;p^{\gamma} \div p^{2\beta+\gamma+\epsilon}) = e\left( \frac{mf+n\overline{f}}{p^{2\beta+\gamma+\epsilon}} \right) \sum_{\substack{a \mod(p^{2\beta+\epsilon})}} e\left(\frac{ma-n\overline{f}^2 a[1+\overline{f}ap^{\gamma}]^{-1}}{p^{2\beta+\epsilon}} \right).
	$$
	Let $h(a)=m-n\overline{f}^2(1+\overline{f}ap^{\gamma})^{-2}$ denote the derivative of the numerator in the exponential. We note that $p^{\ell\gamma} \ddiv \frac{1}{(\ell+1)!} h^{(\ell)}$. We shall write $a=u+vp^{\max\{0,\beta+\epsilon-\gamma\}}$ with $u \, \mod(p^{\max\{0,\beta+\epsilon-\gamma\}})$ and $v \, \mod(p^{\min\{2\beta+\epsilon, \beta+\gamma\}})$ and Taylor expand:
	\begin{multline*}
	ma-n\overline{f}^2 a[1+\overline{f}ap^{\gamma}]^{-1} \equiv mu-n\overline{f}^2 u[1+\overline{f}up^{\gamma}]^{-1} \\
	+h(u)vp^{\max\{0,\beta+\epsilon-\gamma\}}+(\tfrac{1}{2}h'(u))v^2p^{2\max\{0,\beta+\epsilon-\gamma\}} \quad \mod(p^{2\beta+\epsilon}).
	\end{multline*}
	The higher order terms vanish modulo $p^{2\beta+\epsilon}$ as $2 \gamma + \max\{0,\beta+\epsilon-\gamma\}\ge 2\beta+2\epsilon \ge 2\beta+\epsilon$. We conclude
	\begin{multline*}
	T_f(m,n;p^{\gamma} \div p^{2\beta+\gamma+\epsilon}) = e\left( \frac{mf+n\overline{f}}{p^{2\beta+\gamma+\epsilon}} \right) \sum_{\substack{u \mod(p^{\max\{0,\beta+\epsilon-\gamma\}})}} e\left(\frac{mu-n\overline{f}^2 u[1+\overline{f}up^{\gamma}]^{-1}}{p^{2\beta+\epsilon}} \right) \\
	\times \mathcal{G}((\tfrac{1}{2}h'(u)/p^{\gamma})p^{\max\{\beta+\epsilon,\gamma\}}, \,  h(u)  ;\, p^{\min\{2\beta+\epsilon,\beta+\gamma\}}).
	\end{multline*}
	Upon recalling $\beta \ge \gamma/2$, we find $p^{\max\{\beta+\epsilon,\gamma\}} \div p^{\min\{2\beta+\epsilon,\beta+\gamma\}}$. Hence, the Gau{\ss} sum vanishes unless $p^{\max\{\beta+\epsilon,\gamma\}} \div h(u)$ according to Lemma \ref{lem:Gauss}. Thus, 
	\begin{multline*}
	T_f(m,n;p^{\gamma} \div p^{2\beta+\gamma+\epsilon}) = e\left( \frac{mf+n\overline{f}}{p^{2\beta+\gamma+\epsilon}} \right) p^{\max\{\beta+\epsilon,\gamma\}} \\
	\times \sum_{\substack{u \mod(p^{\max\{0,\beta+\epsilon-\gamma\}}) \\ h(u) \equiv 0 \mod(p^{\max\{\beta+\epsilon,\gamma\}})}} e\left(\frac{mu-n\overline{f}^2 u[1+\overline{f}up^{\gamma}]^{-1}}{p^{2\beta+\epsilon}} \right) \\
	\times \mathcal{G}((\tfrac{1}{2}h'(u)/p^{\gamma}), \,  (h(u)/p^{\max\{\beta+\epsilon,\gamma\}})  ;\, p^{2\min\{\beta+\epsilon,\gamma\}-\gamma-\epsilon}).
	\end{multline*}	
	The number of solution $u \, \mod(p^{\max\{0,\beta+\epsilon-\gamma\}})$ to $h(u) \equiv 0 \, (p^{\max\{\beta+\epsilon,\gamma\}})$ is $0$ unless $f^2m \equiv n \, \mod(p^{\gamma})$ and otherwise bounded by $1$ if $p \neq 2$ and $4$ if $p=2$. Thus, we conclude
	$$
	|T_f(m,n;p^{\gamma} \div p^{2\beta+\gamma+\epsilon})| \le p^{\beta+\epsilon/2+\gamma/2} \cdot \begin{cases} 0, & p^{\gamma} \notdiv f^2m-n,\\ 1, & p\neq 2, \, p^{\gamma} \div f^2m-n, \\ 4,& p = 2, \, p^{\gamma} \div f^2m-n.  \end{cases} 
	$$
	The claimed bound now follows from multiplying together all of the local bounds.

\end{proof}

\section{Maa{\ss} forms} 
\label{sec:maassmain}

\subsection{Maa{\ss} forms and Eisenstein series}

\label{sec:Maassforms}

Let $j(\gamma,z)=cz+d$ for $\gamma = \sm a & b \\ c & d \esm \in \SL{2}(\mathbb{R})$ and $z \in \mathbb{H}$. For a function $f: \mathbb{H} \to \mathbb{C}$ and $\gamma \in \SL{2}(\mathbb{R})$, we set
$$
(f|_{\kappa}\gamma)(z) = \left( \frac{j(\gamma,z)}{|j(\gamma,z)|} \right)^{-\kappa} f(\gamma z),
$$
where $\SL{2}(\mathbb{R})$ acts on $\mathbb{H}$ via M\"obius transformations. We shall equip the upper half-plane $\mathbb{H}$ with the invariant measure $d\mu(z)=y^{-2}dxdy$, where $z=x+iy$ with $x,y$ real, and fix the inner product
\begin{equation}
\langle f,g \rangle = \int_{\Gamma \backslash \mathbb{H}} f(z) \overline{g(z)} d\mu(z).
\label{eq:innerprod}
\end{equation}

A \emph{Maa{\ss} form} of weight $\kappa \in \{0,1\}$ with respect to $\Gamma$ and multiplier system $\upsilon$ is a smooth function $f:\mathbb{H} \to \mathbb{C}$ satisfying the following conditions:
\begin{enumerate}
	\item \label{crit:1} $f|_{\kappa}\gamma = \upsilon(\gamma) f$ for all $\gamma \in \Gamma$,
	\item \label{crit:2} $-\Delta_{\kappa}f=\lambda_f f=(\frac{1}{4}+t_f^2)f$ is an eigenfunction of the weight-$\kappa$ Laplace--Beltrami operator $\Delta_{\kappa}=y^2(\frac{\partial^2}{\partial x^2}+\frac{\partial^2}{\partial y^2})-i\kappa y \frac{\partial}{\partial x}$,
	\item \label{crit:3} $f$ is of moderate growth, i.e.\@ for each cusp $\mathfrak{a}$ $(f|_{\kappa} \sigma_{\mathfrak{a}})(z)$ grows at most polynomially as $y \to \infty$ uniformly in $x$,
	\item \label{crit:4} $f$ has a finite $L^2$-norm with respect the inner product \eqref{eq:innerprod}.
\end{enumerate}
The eigenvalues $\lambda_f$ of a Maa{\ss} form are contained in $[\frac{\kappa}{2}(1-\frac{\kappa}{2}),\infty[$.
Maa{\ss} forms admit a Fourier expansion at each cusp $\mathfrak{a}$:
\begin{equation}
(f|_{\kappa}\sigma_{\mathfrak{a}})(z) = \sum_{m \in \mathbb{Z}} C_f(\mathfrak{a},m;y) e((m+\eta_{\mathfrak{a}})x),
\label{eq:Maass-Fourier}
\end{equation}
where
\begin{equation}
C_f(\mathfrak{a},m;y) = \begin{cases} \rho_f(\mathfrak{a},m) W_{\sign(m+\eta_{\mathfrak{a}})\frac{\kappa}{2},it_f}(4 \pi |m+\eta_{\mathfrak{a}}| y), & m+\eta_{\mathfrak{a}} \neq 0, \\
\rho_f(\mathfrak{a},0) y^{\frac{1}{2}+it_f}+\rho_f'(\mathfrak{a},0)y^{\frac{1}{2}-it_f}, & m=\eta_{\mathfrak{a}}=0, t_f \neq 0,\\
 \rho_f(\mathfrak{a},0) y^{\frac{1}{2}}+\rho_f'(\mathfrak{a},0)y^{\frac{1}{2}}\log(y), & m=\eta_{\mathfrak{a}}=t_f=0, \end{cases}
\label{eq:Gen-Fourier}
\end{equation}
and $ W_{k,m}$ is the Whittaker function.

For each singular cusp $\mathfrak{a}$, one can define \emph{Eisenstein series}
\begin{equation}
\mathrm{E}^{\upsilon,\kappa}_{\mathfrak{a}}(z,s) = \sum_{\gamma \in \widehat{\Gamma}_{\mathfrak{a}} \backslash \widehat{\Gamma}} \overline{\upsilon(\gamma) } \Im(\sigma_{\mathfrak{a}}^{-1} \gamma z )^{s} \left( \frac{j(\sigma_{\mathfrak{a}}^{-1} \gamma,z)}{|j(\sigma_{\mathfrak{a}}^{-1} \gamma,z)|}  \right)^{-\kappa}.
\label{eq:Eisenstein-def}
\end{equation}
They are \emph{a priori} only defined for $\Re(s) > 1$, but admit a meromorphic continuation to the complex plane. They satisfy the criteria \eqref{crit:1}-\eqref{crit:3} and admit a Fourier expansion at an arbitrary cusp $\mathfrak{b}$ (see \cite[Prop.\@ 3.4.6]{thesis}\footnote{The reference assumes that one chose a fixed representative for each $\Gamma$-equivalence class of cusps and a fixed scaling matrix for each representative. The relaxation to the consistency relation $\sigma_{\gamma \mathfrak{a}}=\gamma \sigma_{\mathfrak{a}}$ only affects the diagonal term stemming from \cite[Eq. 3.17]{thesis}.}):
\begin{multline}
	(\mathrm{E}^{\upsilon,\kappa}_{\mathfrak{a}}|_{\kappa}\sigma_{\mathfrak{b}})(z,s) = \overline{\upsilon(\sigma_{\mathfrak{a}}\sigma_{\mathfrak{b}}^{-1})} \delta_{\mathfrak{a} \equiv\mathfrak{b} \, (\Gamma)} y^s+\delta_{\eta_{\mathfrak{b}},0} 2^{2-2s} \pi \frac{\Gamma(2s-1)}{\Gamma(s-\frac{\kappa}{2})\Gamma(s+\frac{\kappa}{2})} \mathcal{Z}^{\upsilon,\kappa}_{\mathfrak{a},\mathfrak{b}}(0,0;s) y^{1-s} \\
	+ \pi^{s} \sum_{\substack{n \in \mathbb{Z} \\ n+\eta_{\mathfrak{b}} \neq 0}} \frac{|n + \eta_{\mathfrak{b}}|^{s-1}}{\Gamma(s+\sign(n+\eta_{\mathfrak{b}})\frac{\kappa}{2})} \mathcal{Z}^{\upsilon,\kappa}_{\mathfrak{a},\mathfrak{b}}(0,n;s) W_{\sign(n+\eta_{\mathfrak{b}})\frac{\kappa}{2},\frac{1}{2}-s}(4 \pi |n+\eta_{\mathfrak{b}}|y) e((n+\eta_{\mathfrak{b}})x),
	\label{eq:Eis-Fourier}
\end{multline}
where the Kloosterman Zeta function is given by
\begin{equation}
	\mathcal{Z}^{\upsilon,\kappa}_{\mathfrak{a},\mathfrak{b}}(m,n;s) = \sum_{c \in \mathcal{C}_{\mathfrak{a},\mathfrak{b}}} \frac{S^{\upsilon,\kappa}_{\mathfrak{a},\mathfrak{b}}(m,n;c)}{c^{2s}}.
	\label{eq:Kloos-Zeta}
\end{equation}
The Kloosterman Zeta function converges absolutely for $\Re(s)>1$ and admits a meromorphic continuation to $\Re(s)>\frac{1}{2}$ with at most simple poles located at $s=\frac{1}{2}+\sqrt{\frac{1}{4}-\lambda}$, where $\lambda \neq 0 $ is part of the spectrum \cite{SelbergFourier}. In analogy to the Fourier coefficients of Maa{\ss} forms \eqref{eq:Maass-Fourier},\eqref{eq:Gen-Fourier}, we may define Fourier coefficients $\rho_{\mathrm{E}}(\mathfrak{a},m;r)$ of an Eisenstein series $\mathrm{E}$ at $s=\frac{1}{2}+ir$. Explicitly, we have for $n+\eta_{\mathfrak{b}} \neq 0$:
\begin{equation}
	\rho_{\mathrm{E}^{\upsilon, \kappa}_{\mathfrak{a}}}(\mathfrak{b},n;r) = \pi^{\frac{1}{2}+ir} \frac{|n+\eta_{\mathfrak{b}}|^{-\frac{1}{2}+ir}}{\Gamma(\frac{1}{2}+ir + \sign(n+\eta_{\mathfrak{b}})\frac{\kappa}{2})} \mathcal{Z}^{\upsilon,\kappa}_{\mathfrak{a},\mathfrak{b}}(0,n;\tfrac{1}{2}+ir).
	\label{eq:Eis-Fourier-coeff}
\end{equation}
We shall denote the set of Eisenstein series $\mathrm{E}^{\upsilon,\kappa}_{\mathfrak{a}}$, where $\mathfrak{a}$ runs over a representative set of singular cusps, by $\mathcal{B}^{\kappa}_{\mathrm{E}}(\Gamma,\upsilon)$. Likewise, we denote by $\mathcal{B}_{\mathrm{M}}^{\kappa}(\Gamma,\upsilon)$ an orthonormal basis of Maa{\ss} forms for the discrete/non-continuous subspace.

The significance of these functions is that the Eisenstein series on the line $\Re(s)=\frac{1}{2}$ together with the Maa{\ss} forms form a complete eigenpacket for the $L^2$-space of functions satisying \eqref{crit:1}, \eqref{crit:3}, and \eqref{crit:4}. The latter space we shall denote $L^{2}(\Gamma,\upsilon,\kappa)$.

\subsection{Pre-trace formula}

\begin{prop}[Kuznetsov pre-trace formula] Let $\upsilon$ be a multiplier system of weight $\kappa \in \{0,1\}$ with respect to $\Gamma$, $\mathfrak{a},\mathfrak{b}$ two cusps of $\Gamma$, and $m,n$ two integers satisfying $(m+\eta_{\mathfrak{a}})(n+\eta_{\mathfrak{b}}) \neq 0$. Denote  $\epsilon = \frac{1}{2} (\sign(m+\eta_{\mathfrak{a}})+\sign(n+\eta_{\mathfrak{b}})) \in\{-1,0,1\}$. Suppose that the Kloosterman sums satisfy
	\begin{equation}\begin{gathered}
		\sum_{c \in \mathcal{C}_{\mathfrak{a},\mathfrak{b}}} \frac{|S^{\upsilon,\kappa}_{\mathfrak{a},\mathfrak{b}}(m,n;c)|}{c^2} |\log(c)| <\infty, \quad
		\sum_{c \in \mathcal{C}_{\mathfrak{a},\mathfrak{a}}} \frac{|S^{\upsilon,\kappa}_{\mathfrak{a},\mathfrak{a}}(m,m;c)|}{c^2} |\log(c)| <\infty,\\
		\sum_{c \in \mathcal{C}_{\mathfrak{b},\mathfrak{b}}} \frac{|S^{\upsilon,\kappa}_{\mathfrak{b},\mathfrak{b}}(n,n;c)|}{c^2} |\log(c)| <\infty. \label{eq:Kloos-assump}
	\end{gathered}\end{equation}
	Then, for any $t \in \mathbb{R}$, we have

	\begin{multline}
		\sum_{f \in \mathcal{B}_{\mathrm{M}}^{\kappa}(\Gamma,\upsilon)} \frac{\sqrt{|(m+\eta_{\mathfrak{a}})(n+\eta_{\mathfrak{b}})|} }{\cosh(\pi(t-t_f))\cosh(\pi(t+t_f))} \overline{\rho_f(\mathfrak{a},m)} \rho_f(\mathfrak{b},n)\\
		+ \frac{1}{4\pi} \sum_{\mathrm{E} \in \mathcal{B}^{\kappa}_{\mathrm{E}}(\Gamma,\upsilon)} \int_{-\infty}^{\infty} \frac{\sqrt{|(m+\eta_{\mathfrak{a}})(n+\eta_{\mathfrak{b}})|} }{\cosh(\pi(t-r))\cosh(\pi(t+r))} \overline{\rho_{\mathrm{E}}(\mathfrak{a},m;r)} \rho_{\mathrm{E}}(\mathfrak{b},n;r) dr \\
		= \frac{|\Gamma(1-\epsilon\frac{\kappa}{2}+it)|^2}{4\pi^3} \Biggl\{ \overline{\upsilon(\sigma_{\mathfrak{a}}\sigma_{\mathfrak{b}}^{-1})} |\epsilon|\delta_{\mathfrak{a}\equiv \mathfrak{b} \, (\Gamma)}\delta_{m,n} \\+ \sum_{c \in \mathcal{C}_{\mathfrak{a},\mathfrak{b}}} \frac{S^{\upsilon,\kappa}_{\mathfrak{a},\mathfrak{b}}(m,n;c)}{c} I_{\epsilon\kappa}^{\pm}\left( \frac{4 \pi \sqrt{|(m+\eta_{\mathfrak{a}})(n+\eta_{\mathfrak{b}})|}}{c} ,t \right)    \Biggr\},
		\label{eq:Kuz-pre-equal}
	\end{multline}
	where the sign $\pm$ is according to the sign of $(m+\eta_{\mathfrak{a}})(n+\eta_{\mathfrak{b}})$ and
	$$\begin{aligned}
		I_{\kappa}^+(\omega,t) &= -2i \omega \int_{-i}^{i} K_{2it}\left( \omega q \right) q^{\kappa-1} dq, \\
		I_{0}^{-}(\omega,t) &= 2 \omega \frac{\sinh(\pi t)}{t} K_{2it}(\omega).
	\end{aligned}$$
Here, the integral $\int_{-i}^{i}$ is along the unit circle in positive/anti-clockwise direction.

	\label{prop:Kuz-trace}
\end{prop}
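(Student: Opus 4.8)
The plan is to derive \eqref{eq:Kuz-pre-equal} as the two evaluations---spectral and geometric---of a single inner product of Poincar\'e series, in the spirit of \cite[\S 3.4]{thesis}. For a cusp $\mathfrak{a}$ and an integer $m$ with $m+\eta_{\mathfrak{a}}\neq 0$, I would introduce the Poincar\'e series
\[
P_{\mathfrak{a},m}(z)=\sum_{\gamma\in\widehat{\Gamma}_{\mathfrak{a}}\backslash\widehat{\Gamma}}\overline{\upsilon(\gamma)}\left(\frac{j(\sigma_{\mathfrak{a}}^{-1}\gamma,z)}{|j(\sigma_{\mathfrak{a}}^{-1}\gamma,z)|}\right)^{-\kappa}g_m(\sigma_{\mathfrak{a}}^{-1}\gamma z),
\]
with seed $g_m(z)=W_{\sign(m+\eta_{\mathfrak{a}})\frac{\kappa}{2},\,it}(4\pi|m+\eta_{\mathfrak{a}}|y)\,e((m+\eta_{\mathfrak{a}})x)$, a weight-$\kappa$ Whittaker eigenfunction at spectral parameter $t$. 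The exponential decay of $W$ at $\infty$ and its $y^{1/2\pm it}$ growth at $0$ make the seed admissible; the hypotheses \eqref{eq:Kloos-assump}, after opening the series at the cusps $\mathfrak{a}$ and $\mathfrak{b}$, are exactly what is needed to place $P_{\mathfrak{a},m},P_{\mathfrak{b},n}\in L^2(\Gamma,\upsilon,\kappa)$ and to legitimize the unfoldings below (if necessary one first attaches a regularizing parameter to the seed and removes it by meromorphic continuation). The entire identity is then $\langle P_{\mathfrak{a},m},P_{\mathfrak{b},n}\rangle$ computed in two ways.

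For the spectral side, I would insert the spectral decomposition of $L^2(\Gamma,\upsilon,\kappa)$ over $\mathcal{B}^{\kappa}_{\mathrm{M}}(\Gamma,\upsilon)$ and $\mathcal{B}^{\kappa}_{\mathrm{E}}(\Gamma,\upsilon)$ and use Parseval to get $\langle P_{\mathfrak{a},m},P_{\mathfrak{b},n}\rangle=\sum_f\langle P_{\mathfrak{a},m},f\rangle\overline{\langle P_{\mathfrak{b},n},f\rangle}+(\text{Eisenstein part})$. Unfolding $\langle P_{\mathfrak{a},m},f\rangle$ against the Fourier expansion \eqref{eq:Maass-Fourier}--\eqref{eq:Gen-Fourier} of $f$ at $\mathfrak{a}$ isolates the $m$-th coefficient and leaves the Mellin-type integral $\int_0^\infty W_{\sign(m+\eta_{\mathfrak{a}})\frac{\kappa}{2},it}(4\pi|m+\eta_{\mathfrak{a}}|y)\,\overline{W_{\sign(m+\eta_{\mathfrak{a}})\frac{\kappa}{2},it_f}(4\pi|m+\eta_{\mathfrak{a}}|y)}\,y^{-2}\,dy$, so that $\langle P_{\mathfrak{a},m},f\rangle=\overline{\rho_f(\mathfrak{a},m)}\cdot\mathcal{I}(t,t_f)$. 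The integral of a product of two Whittaker functions is a standard Barnes/Gamma evaluation; combining the resulting Gamma factors and applying $|\Gamma(\tfrac{1}{2}+iu)|^2=\pi/\cosh(\pi u)$ converts the ratio into precisely $\sqrt{|m+\eta_{\mathfrak{a}}|}/(\cosh(\pi(t-t_f))\cosh(\pi(t+t_f)))$, up to the overall constant $|\Gamma(1-\epsilon\frac{\kappa}{2}+it)|^2/(4\pi^3)$, which is where that prefactor is born. The same computation at $\mathfrak{b}$ and for the Eisenstein part reproduces the whole left-hand side of \eqref{eq:Kuz-pre-equal}.

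For the geometric side, I would instead unfold one Poincar\'e series against the other directly, via representatives for $B\backslash\sigma_{\mathfrak{a}}^{-1}\Gamma\sigma_{\mathfrak{b}}/B$. The open, lower-left-zero cell contributes only when $\mathfrak{a}\equiv\mathfrak{b}$ and, after the $x$-integration forces $m=n$, yields the diagonal term $\overline{\upsilon(\sigma_{\mathfrak{a}}\sigma_{\mathfrak{b}}^{-1})}\,|\epsilon|\,\delta_{\mathfrak{a}\equiv\mathfrak{b}}\,\delta_{m,n}$. Each cell with lower-left entry $c\in\mathcal{C}_{\mathfrak{a},\mathfrak{b}}$ contributes the corresponding summand of the Kloosterman sum $S^{\upsilon,\kappa}_{\mathfrak{a},\mathfrak{b}}(m,n;c)$; performing the Bruhat $x$-integration and the remaining $y$-integration of the two Whittaker seeds turns each cell into $\tfrac{1}{c}\,I^{\pm}_{\epsilon\kappa}\!\left(4\pi\sqrt{|(m+\eta_{\mathfrak{a}})(n+\eta_{\mathfrak{b}})|}/c,\,t\right)$, the sign and the index $\epsilon\kappa$ being dictated by $\sign((m+\eta_{\mathfrak{a}})(n+\eta_{\mathfrak{b}}))$; consistently, the product-negative case forces $\epsilon=0$ and produces the $I^-_0$ shape, while the product-positive case gives $I^+_{\pm\kappa}$. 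Summing over $c$ reproduces the geometric side, and equating the two evaluations is the proposition.

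I expect two genuinely technical points. The first is the justification of the interchange of the spectral sum and integral with the geometric unfolding: this is where the logarithmic hypotheses \eqref{eq:Kloos-assump} are used, together with the $O(1/c)$ decay of $I^{\pm}_{\epsilon\kappa}(\,\cdot\,/c,t)$ for large $c$, to secure absolute convergence (and, should the seed require regularization, a continuation argument in the auxiliary parameter). The second is the two special-function evaluations---matching the Whittaker--Whittaker Mellin integral to the clean weight $1/(\cosh\cosh)$ and the Bruhat-cell integral to $I^{\pm}_{\epsilon\kappa}$. The main obstacle should be this second point, in particular the careful bookkeeping of the half-integral Whittaker indices $\sign(\cdot)\frac{\kappa}{2}$ across the sign cases and the extraction of the normalizing constant $|\Gamma(1-\epsilon\frac{\kappa}{2}+it)|^2/(4\pi^3)$.
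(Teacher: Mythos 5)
Your overall architecture --- evaluating an inner product of two Poincar\'e series once spectrally (Parseval) and once geometrically (Bruhat decomposition) --- is exactly the proof this paper relies on: the paper itself gives no argument, but defers to \cite[Prop.~3.6.8--3.6.10]{thesis}, and points to \cite{Proskurin}, \cite[Eq.~(4.50)]{Kuz1}, \cite[Prop.~5.2]{DFI2}, \cite[\S 4]{IwDes}, all of which run this double-unfolding scheme. The genuine gap is your choice of seed, and it is not a repairable technicality of the kind you flag: with the Whittaker seed $g_m(z)=W_{\sign(m+\eta_{\mathfrak{a}})\frac{\kappa}{2},it}(4\pi|m+\eta_{\mathfrak{a}}|y)\,e((m+\eta_{\mathfrak{a}})x)$, the spectral unfolding produces $\int_0^\infty W_{k,it}(Yy)\,W_{k,it_f}(Yy)\,y^{-2}\,dy$ with $k=\sign(m+\eta_{\mathfrak{a}})\frac{\kappa}{2}$ and $Y=4\pi|m+\eta_{\mathfrak{a}}|$. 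Since $W_{k,it_f}(y)\asymp y^{1/2}\bigl(\alpha y^{it_f}+\beta y^{-it_f}\bigr)$ as $y\to 0$, the integrand has size $y^{-1}$ with purely oscillatory phases $y^{\pm i(t\pm t_f)}$, so this integral is not a convergent Barnes integral --- it diverges even conditionally. Worse, its analytic regularisation (insert $y^s$ and continue to $s=0$) vanishes for $t_f\neq\pm t$: by the Mellin--Barnes evaluation the regularised value carries a factor $1/\Gamma(s)\to 0$ against four Gamma factors that stay finite off $t_f=\pm t$. So the regularised pairing is a delta-type spectral projection onto $t_f=\pm t$; it is \emph{not} the weight $1/(\cosh(\pi(t-t_f))\cosh(\pi(t+t_f)))$, and no bookkeeping of the half-integral Whittaker indices will convert one into the other. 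Your spectral side therefore does not reproduce the left-hand side of \eqref{eq:Kuz-pre-equal}.

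The weight and the prefactor in \eqref{eq:Kuz-pre-equal} are born from a different integral. The cited proofs take Selberg--Poincar\'e seeds $y^{s}e((m+\eta_{\mathfrak{a}})x)e^{-2\pi|m+\eta_{\mathfrak{a}}|y}$ with $s=\sigma+it$ and $\sigma>1$, where the Poincar\'e series converges absolutely; the unfolding then gives the Laplace--Mellin integral $\int_0^\infty y^{s-2}e^{-Yy/2}W_{k,it_f}(Yy)\,dy=Y^{1-s}\,\Gamma(s-\tfrac12+it_f)\Gamma(s-\tfrac12-it_f)/\Gamma(s-k)$, and the product of this with the conjugate factor coming from the second Poincar\'e series, in the limit $\sigma\to 1^+$, is what produces $\pi^{2}\cosh(\pi(t-t_f))^{-1}\cosh(\pi(t+t_f))^{-1}\,|\Gamma(1-\epsilon\tfrac{\kappa}{2}+it)|^{-2}$, i.e.\@ both the $\cosh\cosh$ weight and the Gamma prefactor after rearranging. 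This also explains the precise form of the hypotheses \eqref{eq:Kloos-assump}: the exponent $c^{-2}$ with the factor $|\log c|$ is calibrated to the uniform convergence, as $\sigma\to 1^+$, of the geometric side $\sum_c S^{\upsilon,\kappa}_{\mathfrak{a},\mathfrak{b}}(m,n;c)\,c^{-2\sigma}(\cdots)$ and of the norms $\|P_{\mathfrak{a},m}\|^2$, $\|P_{\mathfrak{b},n}\|^2$ (which unfold into the two diagonal sums) --- exactly the limit the paper's proof note refers to --- and not to $L^2$-membership of a Whittaker-seeded series at the unitary point. To repair your argument, replace the Whittaker seeds by these Selberg--Poincar\'e seeds, prove the identity for $\sigma>1$ where every unfolding and the spectral expansion converge absolutely, and then pass to $\sigma\to1^+$ using \eqref{eq:Kloos-assump} on the geometric side; your description of the geometric cells (diagonal term with $|\epsilon|\,\delta_{\mathfrak{a}\equiv\mathfrak{b}}\,\delta_{m,n}$, Kloosterman sums times Bessel-type cell integrals, sign cases governed by $\sign((m+\eta_{\mathfrak{a}})(n+\eta_{\mathfrak{b}}))$) then goes through as you describe.
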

\begin{proof} This is \cite[Prop. 3.6.8--3.6.10]{thesis}\footnote{The reference assumes that one chose a fixed representative for each $\Gamma$-equivalence class of cusps and a fixed scaling matrix for each representative. The relaxation to the consistency relation $\sigma_{\gamma \mathfrak{a}}=\gamma \sigma_{\mathfrak{a}}$ only affects the diagonal term stemming from \cite[Eq. 3.17]{thesis}.} together with Remark \ref{rem:Kloos-conj}. The extra assumption on the Kloosterman sums guarantees the absolute convergence of both sides as $\sigma \to 1^+$. One may also compare \cite[Lem 3]{Proskurin}, \cite[Eq. (4.50)]{Kuz1}, \cite[Prop. 5.2]{DFI2}, or \cite[\S4]{IwDes}.
\end{proof}

\subsection{Bounds on Fourier coefficients} \label{sec:twisted-Fourier-Maass} Let $\kappa \in \{0,1\}$, $q,Q \in \mathbb{N}$ with $q \div Q$. The space $L^2(\Gamma_{0,\pm 1}(Q;Q/q), \sign_{Q/q}^{\kappa}, \kappa )$ is the orthogonal direct sum of the spaces $L^2(\Gamma_0(Q), \chi, \kappa)$ where $\chi$ runs over all Dirichlet characters of modulus $Q/q$ and $\chi(-1)=(-1)^{\kappa}$. We note that the index satisfies
$$
I_{Q/q} := [\Gamma_0(Q):\Gamma_{0,\pm 1}(Q;Q/q)] = \varphi(Q/q) \cdot \begin{cases}1, & Q/q=1,2, \\ 1/2, & \text{otherwise},  \end{cases} = \left(\tfrac{Q}{q}\right)^{1+o(1)}.
$$
Thus, we may choose eigenpackets as follows
\begin{equation}\begin{aligned}
	\mathcal{B}^{\kappa}_{\mathrm{M}}(\Gamma_{0,\pm 1}(Q;Q/q), \sign_{Q/q}^{\kappa}) &=  (I_{Q/q})^{-1/2} \bigsqcup_{\substack{\chi \mod Q/q \\ \chi(-1)=(-1)^{\kappa}}} \mathcal{B}^{\kappa}_{\mathrm{M}}(\Gamma_0(Q),\chi), \\
	\mathcal{B}^{\kappa}_{\mathrm{E}}(\Gamma_{0,\pm 1}(Q;Q/q), \sign_{Q/q}^{\kappa}) &= (I_{Q/q})^{-1/2} \bigsqcup_{\substack{\chi \mod Q/q \\ \chi(-1)=(-1)^{\kappa}}} \mathcal{B}^{\kappa}_{\mathrm{E}}(\Gamma_0(Q),\chi).
\end{aligned} \label{eq:basis-choice}\end{equation}

A standard application of the pre-trace formula (Prop. \ref{prop:Kuz-trace}) is to demontrate an asymptotic orthogonality relation between the Fourier coeffients. Here, we only require an upper bound.

\begin{prop} Let $\kappa \in \{0,1\}$, $q,Q \in \mathbb{N}$ with $q \mid Q$, $0 \neq m \in \mathbb{Z}$, and $\epsilon = \sign(m)$. Then, 
	\begin{multline*}
		\sum_{\substack{\chi \mod Q/q \\ \chi(-1)=(-1)^{\kappa}}} \Biggl(  \sum_{\substack{f \in \mathcal{B}_{\mathrm{M}}^{\kappa}(\Gamma_0(Q),\chi) \\ |t_f| \le T}} \frac{|m|}{\cosh(\pi t_f)} (1+|t_f|)^{\epsilon\kappa} |\rho_f(\infty,m)|^2 \\
		+ \sum_{\mathrm{E} \in \mathcal{B}^\kappa_{\mathrm{E}}(\Gamma_0(Q),\chi)} \int_{-T}^{T} \frac{|m|}{\cosh(\pi t)} (1+|t|)^{\epsilon\kappa} |\rho_{\mathrm{E}}(\infty,m;t)|^2 dt   \Biggr) \\
		\ll \left(\frac{Q}{q}\right)^{1+o(1)} \left(T^2 +  \frac{|m|^{1/2+o(1)}(q,m)^{1/2}}{Q^{1-o(1)}} \right)
	\end{multline*}
	and
	\begin{multline*}
		\sum_{\substack{\chi \mod Q/q \\ \chi(-1)=(-1)^{\kappa}}} \sum_{\substack{r \mod((q,Q/q))\\ (r,Q)=1}} \Biggl( \sum_{\substack{f \in \mathcal{B}_{\mathrm{M}}^{\kappa}(\Gamma_0(Q),\chi) \\ |t_f| \le T}} \frac{|mw_q|}{\cosh(\pi t_f)} (1+|t_f|)^{\epsilon\kappa} |\rho_f(r/q,mw_q)|^2 \\
		+ \sum_{\mathrm{E} \in \mathcal{B}^\kappa_{\mathrm{E}}(\Gamma_0(Q),\chi)} \int_{-T}^{T} \frac{|mw_q|}{\cosh(\pi t)} (1+|t|)^{\epsilon\kappa} |\rho_{\mathrm{E}}(r/q,mw_q;t)|^2 dt \Biggr) \\
		\ll \left(\frac{Q}{q}\right)^{1+o(1)} (q,Q/q) \left(T^2 +  \frac{|m|^{1/2+o(1)}(q,m)^{1/2}}{Q^{1-o(1)}}  \right).
	\end{multline*}
	\label{prop:chi-avg-fourier-bound}
\end{prop}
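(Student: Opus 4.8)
The plan is to deduce both estimates from the Kuznetsov pre-trace formula (Proposition \ref{prop:Kuz-trace}) applied to the \emph{big} group $\Gamma=\Gamma_{0,\pm1}(Q;Q/q)$ with multiplier $\sign_{Q/q}^{\kappa}$, and then to unfold the spectral decomposition \eqref{eq:basis-choice}. Since a coefficient of an $L^2(\Gamma_0(Q),\chi)$-form acquires the factor $(I_{Q/q})^{-1/2}$ when the form is viewed inside an orthonormal basis for $\Gamma$, the $\chi$-summed left-hand sides that we must bound equal $I_{Q/q}=(Q/q)^{1+o(1)}$ times the single corresponding spectral sum for $\Gamma$. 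It therefore suffices to prove the estimates for $\Gamma$ alone, after dividing the right-hand side by $(Q/q)^{1+o(1)}$: for the first bound with the cusp pair $(\infty,\infty)$ and $m=n$, and for the second with $(r/q,r/q)$, $m=n$ at frequency $mw_q$, additionally summed over $r \bmod (q,Q/q)$, $(r,Q)=1$. The convergence hypotheses \eqref{eq:Kloos-assump} follow from the Weil-type bounds in Corollary \ref{cor:bound-kloosterman-after-CS}.

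To produce the truncation $|t_f|\le T$ together with the weight $(1+|t_f|)^{\epsilon\kappa}/\cosh(\pi t_f)$, I would run the standard local Weyl-law argument. Every term on the spectral side of \eqref{eq:Kuz-pre-equal} is non-negative (it is $\sqrt{\cdots}\,|\rho|^2$ times a positive kernel, including for the finitely many exceptional $t_f\in i(0,\vartheta]$ and for the Eisenstein integrand), so for each integer $t$ one discards all but the window $|t_f-t|\le 1$, on which $\cosh(\pi(t-t_f))\cosh(\pi(t+t_f))\asymp\cosh(2\pi t)$ and $\cosh(\pi t_f)\asymp e^{\pi t}$. Using $|\Gamma(1-\epsilon\tfrac{\kappa}{2}+it)|^2\asymp t^{1-\epsilon\kappa}e^{-\pi t}$, the weights combine as $\tfrac{\cosh(2\pi t)}{\cosh(\pi t)}(1+t)^{\epsilon\kappa}|\Gamma(1-\epsilon\tfrac{\kappa}{2}+it)|^2\asymp t$, so the window contributes $\ll t\,\{\mathrm{diag}(t)+\mathrm{Kloos}(t)\}$, where $\mathrm{diag}(t)$ and $\mathrm{Kloos}(t)$ denote the diagonal and Kloosterman terms of the right-hand side. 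Summing over $t=1,\dots,\lceil T\rceil$ reassembles the weighted left-hand side of the proposition.

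The diagonal term $\overline{\upsilon(I)}\,|\epsilon|\,\delta_{\mathfrak{a}\equiv\mathfrak{b}}\delta_{m,n}$ equals $1$ at each cusp, so it contributes $\sum_{t\le T}t\asymp T^2$ for $(\infty,\infty)$ and, after summing over the $\asymp(q,Q/q)$ admissible $r$, $\asymp(q,Q/q)T^2$ for $(r/q,r/q)$; multiplying by $(Q/q)^{1+o(1)}$ yields the two claimed main terms. It remains to bound $\sum_{t\le T}t\,\mathrm{Kloos}(t)$ by the stated error uniformly in $T$. Here I would insert the $\mathcal{C}$-sets and Kloosterman bounds from Lemma \ref{lem:Kloos-Gamma01-Q-Q/q} and Corollary \ref{cor:bound-kloosterman-after-CS}: for $(\infty,\infty)$ one has $c\in Q\mathbb{N}$ and argument $\omega=4\pi|m|/c\le 4\pi|m|/Q$, and for $(r/q,r/q)$ one has $cqw_q\in Q\mathbb{N}$ with $\omega=4\pi|m|/(cq)$, both small for large $Q$; the minimum $\min\{c/(Q/q),(c/(Q/q),m)^{1/2}c^{1/2}\}$ is what produces the $(q,m)^{1/2}$-dependence after summation over $c\equiv 0\bmod Q$, while the factors $(c,Q/q)$ and $\varphi((q,Q/q))/\varphi(\cdots)$ in the second bound supply the extra $(q,Q/q)$.

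I expect the estimation of the Bessel-type integrals $I_{\epsilon\kappa}^{\pm}(\omega,t)$ and the resulting double sum over $c$ and $t$ to be the main obstacle. One needs decay of $I_{\epsilon\kappa}^{\pm}(\omega,t)$ in $t$, so that $\sum_{t\le T}t\,|I_{\epsilon\kappa}^{\pm}(\omega,t)|$ converges uniformly in $T$ (no residual $T$-dependence is allowed in the error), as well as decay in $\omega$ as $\omega\to 0$, so that the $c$-sum converges; for this one exploits that $K_{2it}(\omega q)$ is small in the sub-turning-point range $\omega q\ll t$. Combining these with the divisor-bounded Kloosterman estimates should give a total of size $|m|^{1/2+o(1)}(q,m)^{1/2}/Q^{1-o(1)}$, and, in the second case, the same with the extra factor $(q,Q/q)$, which together with the factor $(Q/q)^{1+o(1)}$ from the spectral reduction completes both bounds.
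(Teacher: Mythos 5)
Your overall skeleton matches the paper's proof: apply the pre-trace formula (Proposition \ref{prop:Kuz-trace}) to $\Gamma_{0,\pm1}(Q;Q/q)$ with multiplier $\sign_{Q/q}^{\kappa}$, equal cusps and equal Fourier coefficients, unfold via the basis choice \eqref{eq:basis-choice} so that the $\chi$-sum appears with the factor $I_{Q/q}=(Q/q)^{1+o(1)}$, read off $T^2$ (resp.\ $(q,Q/q)T^2$) from the diagonal, and control the Kloosterman side by Lemma \ref{lem:Kloos-Gamma01-Q-Q/q} and Corollary \ref{cor:bound-kloosterman-after-CS}. The genuine gap is in your treatment of the spectral-parameter average. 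You sample the pre-trace formula at integer points $t=1,\dots,\lceil T\rceil$ and then hope that $\sum_{t\le T}t\,|I^{\pm}_{\epsilon\kappa}(\omega,t)|$ converges uniformly in $T$, on the grounds that $K_{2it}(\omega q)$ is small below the turning point. This is false: the exponential smallness of $K_{2it}$ is exactly cancelled by the $\sinh(\pi t)/t$-type factors in the kernel. Concretely, for $0<\omega\ll1$ and $t\ge1$ the small-argument expansion $K_{2it}(z)\approx\tfrac12\bigl[\Gamma(2it)(z/2)^{-2it}+\Gamma(-2it)(z/2)^{2it}\bigr]$ together with $|\Gamma(2it)|\asymp t^{-1/2}e^{-\pi t}$ gives $|I^{\pm}_{0}(\omega,t)|\asymp \omega\,t^{-3/2}$, so that
\begin{equation*}
\sum_{1\le t\le T} t\,|I^{\pm}_{0}(\omega,t)| \;\asymp\; \omega\,T^{1/2},
\end{equation*}
which grows with $T$. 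Feeding this into the $c$-sum produces an error of shape $T^{1/2}|m|^{1+o(1)}(q,m)^{1/2}/Q^{3/2-o(1)}$, which cannot be absorbed into the proposition's $T$-uniform error term (take, e.g., $|m|=Q^{3}$ and $T=1$).

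The point you are missing is that the required $T$-uniformity comes from \emph{cancellation} in the $t$-aspect: the kernel oscillates like $e^{\pm 2it\log(\omega/2)}$, and only an average over $t$ exploits this. That is why the paper multiplies the identity \eqref{eq:Kuz-pre-equal} by $2\pi t\,|\Gamma(1-\epsilon\tfrac{\kappa}{2}+it)|^{-2}$ and \emph{integrates continuously} over $t\in[0,T]$; the spectral weights $(1+|t_f|)^{\epsilon\kappa}$ then come from the positivity and lower bound $H_{\epsilon\kappa}(r,T)\gg(1+|r|)^{\epsilon\kappa}$ (valid also for exceptional $r$ thanks to the Selberg spectral gap), while on the geometric side one invokes the known, $T$-uniform bound $\int_0^{T}t\,I^{+}_{\epsilon\kappa}(\omega,t)\,dt\ll\min\{\omega^{1/2},\,\omega(1+|\log\omega|)\}$ (\cite[Eq.\ (5.15)]{Kuz1} for $\kappa=0$, \cite[Lem 6.2]{Humphries16} for $\kappa=1$). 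Your discrete sampling destroys exactly this mechanism: a sum over integer $t$ of $e^{2it\log(\omega/2)}$ suffers from aliasing (no cancellation at all when $2\log(\omega/2)\in2\pi\mathbb{Z}$), and since $\omega=4\pi|m|/c$ ranges over the whole $c$-sum, such resonant values cannot be avoided. To repair your argument you would either have to replace the integer sampling by the continuous $t$-integral (at which point it becomes the paper's proof) or prove a discrete analogue of Kuznetsov's integral bound, which is not available.
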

\begin{proof} We apply Proposition \ref{prop:Kuz-trace} for the group $\Gamma_{0,\pm 1}(Q;Q/q)$ with the multiplier system $\sign_{Q/q}^{\kappa}$, equal cusps and Fourier coefficients, and the choice of basis given by \eqref{eq:basis-choice}. Concretely, we sum the equality \eqref{eq:Kuz-pre-equal} over the relevant cusps and multiply it with $2 \pi t |\Gamma(1-\epsilon\frac{\kappa}{2}+it)|^{-2}$ and integrate $t$ from $0$ to $T$. On the spectral side, we use that
	$$\begin{aligned}
	H_{\epsilon\kappa}(r,T) &= 2\pi \int_0^T \frac{t}{|\Gamma(1-\epsilon\frac{\kappa}{2}+it)|^2} \frac{\cosh(\pi r)}{\cosh(\pi(t+r))\cosh(\pi(t-r))} dt \\
	&= 2\pi \int_0^T \frac{t}{|\Gamma(1-\epsilon\frac{\kappa}{2}+it)|^2} \frac{\cosh(\pi r)}{\sinh(\pi t)^2+\cosh(\pi r)^2} dt
	\end{aligned}$$
	is non-negative for $r \in \mathbb{R} \cup ]-\frac{1}{2}i,\frac{1}{2}i[$ and satisfies
	\begin{equation}
	H_{\epsilon\kappa}(r,T) \gg_{\delta} (1+|r|)^{\epsilon\kappa}
	\label{eq:H-lower}
	\end{equation}
	for $r \in \mathbb{R} \cup ]-(\frac{1}{2}-\delta)i,(\frac{1}{2}-\delta)i[$ with $|r|\le T$ together with a spectral gap, e.g\@ the Selberg spectral gap $|\Im t_j| \le \frac{1}{4}$ \cite{SelbergFourier}. The Inequality \eqref{eq:H-lower} itself follows from looking at the interval $[\frac{1}{2},1]$ if $|r|\le 1$, respectively $[r-\frac{1}{2},r]$ if $1\le r\le T$, and Stirling's approximation for the Gamma function. On the geometric side, we collect the Kloosterman sums according to Lemma \ref{lem:Kloos-Gamma01-Q-Q/q} and use
	$$
	\int_0^{T} t I_{\epsilon\kappa}^+(\omega,t) dt \ll \begin{cases} \omega^{1/2}, & \omega \ge 1,\\ \omega(1+|\log(\omega)|), & \omega \le 1, \end{cases}
	$$
	see \cite[Eq. (5.15)]{Kuz1} for the case $\kappa=0$ and \cite[Lem 6.2]{Humphries16} for $\kappa=1$. The diagonal term thus contributes $O(T^2)$, respectively $O((q,Q/q)T^2)$, and the terms involving the Kloosterman sums we bound using Corollary \ref{cor:bound-kloosterman-after-CS} as follows:
	\begin{multline*}
		\sum_{c \in Q \mathbb{N}} \frac{|S_{\infty,\infty}^{\Gamma_{0,\pm 1}(Q,Q/q),\sign_{Q/q}^{\kappa},\kappa}(m,m;c)|}{c}  \times \min\left\{  \left( \frac{|m|}{c} \right)^{1/2}  , \left( \frac{|m|}{c} \right)^{1-o(1)} \right\}  \\
		\ll \frac{q^{1/2}(q,m)^{1/2}}{Q^{1-o(1)}} \sum_{\substack{e \mid m/(q,m)}} \frac{1}{e^{1/2-o(1)}} \sum_{\substack{c=1}}^{\infty} \frac{\min\{\frac{q}{(q,m)}c, \frac{Q}{q}\}^{1/2}}{c^{1/2-o(1)}} \times \min \left\{ \left(\frac{|m|}{Qc}\right)^{1/2}, \left(\frac{|m|}{Qc}\right)^{1-o(1)}\right\} \\
		\ll \frac{|m|^{1+o(1)}(q,m)^{1/2}}{Q^{3/2-o(1)}} \min\left\{ 1, \frac{q^2}{Q(q,m)}, \frac{Q}{|m|} \right\}^{1/2}
		\label{eq:diag-infty-Kloostersum-bound}
	\end{multline*}
	and
	\begin{align*} & \sum_{cqw_q \in Q \mathbb{N}}  \frac{1}{cqw_q} \left| \sum_{r \mod((q,Q/q))}   S_{r/q,r/q}^{\Gamma_{0,\pm 1}(Q,Q/q),\sign_{Q/q}^{\kappa},\kappa}(mw_q,mw_q;cqw_q) \right|   \\ & \hspace{7cm}  {  \times \min\left\{ \left( \frac{|m|}{cq} \right)^{1/2}  , \left( \frac{|m|}{cq} \right)^{1-o(1)} \right\}}  \\
		& \hspace{5mm} \ll  \frac{(q,Q/q)}{Q^{1-o(1)}} \sum_{e \mid w_q} e^{o(1)} \sum_{f \mid m/(q,m)} \frac{1}{f^{1/2-o(1)}} \sum_{c=1}^{\infty} \frac{q^{1/2}(q,m)^{1/2}}{c^{1/2-o(1)}} \min\left\{ \frac{cq}{(q,m)}, (q,Q/q)  \right\}^{1/2} \\ &  \hspace{7cm} { \times \min \left\{ \left( \frac{|m|}{c(q^2,Q)} \right)^{1/2} , \left( \frac{|m|}{c(q^2,Q)} \right)^{1-o(1)} \right\} } \\
		& \hspace{5mm} \ll  \frac{|m|^{1+o(1)}(q,Q/q)^{1/2}(q,m)^{1/2}}{q^{1/2}Q^{1-o(1)}} \min \left\{ 1, \frac{q}{(q,Q/q)(q,m)} , \frac{q (q,Q/q)}{|m|}  \right\}^{1/2}.
 \end{align*}
\end{proof}

\subsection{Density estimates for exceptional eigenfunctions}

Retain the notation from \S\ref{sec:twisted-Fourier-Maass}. Exceptional eigenfunctions $f$, i.e.\@ those with $\Im t_f \neq 0$, may only arise if $\kappa =0$ since for $\kappa=1$ the $-\Delta_1$ eigenvalues are trivially $\ge \frac{1}{4}$, see \S\ref{sec:Maassforms}. In bounding their Fourier coefficients, one may amplify the contribution of those which are more exceptional. We have the following bound.

\begin{prop} Let $q,Q \in \mathbb{N}$ with $q \mid Q$ and $0 \neq m \in \mathbb{Z}$. Then, for $X\ge 1$, we have 
	\begin{multline*}
		\sum_{\substack{\chi \mod Q/q \\ \chi(-1)=1}}  \sum_{\substack{f \in \mathcal{B}_{\mathrm{M}}^{0}(\Gamma_0(Q),\chi) \\ 0<|\Im t_f| \le \frac{1}{2}}} \frac{|m|}{\cosh(\pi t_f)} |\rho_f(\infty,m)|^2 X^{2|t_f|} \\
		\ll \frac{Q}{q} \left(1+ \frac{|m|^{1/2}(q,m)^{1/2}X^{1/2}}{Q} \right) |mQX|^{o(1)}
	\end{multline*}
	and
	\begin{multline*}
		\sum_{\substack{\chi \mod Q/q \\ \chi(-1)=1}} \sum_{\substack{r \mod((q,Q/q))\\(r,Q)=1}} \sum_{\substack{f \in \mathcal{B}_{\mathrm{M}}^{0}(\Gamma_0(Q),\chi) \\ 0<|\Im t_f| \le \frac{1}{2}}} \frac{|mw_q|}{\cosh(\pi t_f)}  |\rho_f(r/q,mw_q)|^2 X^{2|t_f|} \\
		\ll (q,Q/q)\frac{Q}{q}\left( 1+\frac{|m|^{1/2}(q,m)^{1/2}X^{1/2}}{Q} \right)|mQX|^{o(1)}.
	\end{multline*}
	\label{prop:exceptional-chi-avg-fourier-bound}
\end{prop}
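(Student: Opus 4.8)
The plan is to run the pre-trace formula exactly as in the proof of Proposition \ref{prop:chi-avg-fourier-bound}, but to replace the weight $2\pi t\,|\Gamma(1-\epsilon\tfrac{\kappa}{2}+it)|^{-2}\mathbf{1}_{[0,T]}(t)$ by an $X$-dependent amplifying weight. First I would note that, since the $-\Delta_1$ eigenvalues are $\ge\tfrac14$, exceptional forms occur only for $\kappa=0$; thus $\epsilon\kappa=0$ throughout and every factor $(1+|t_f|)^{\epsilon\kappa}$ equals $1$. Writing $t_f=is_f$ with $s_f\in\,]0,\tfrac12]$, I apply Proposition \ref{prop:Kuz-trace} on $\Gamma=\Gamma_{0,\pm1}(Q;Q/q)$ with the trivial multiplier $\sign_{Q/q}^{0}$, equal cusps $\mathfrak a=\mathfrak b$, and $m=n$, using the basis \eqref{eq:basis-choice}, summing \eqref{eq:Kuz-pre-equal} over the cusp $\infty$ for the first bound and over the cusps $r/q$, $r \bmod (q,Q/q)$, for the second.

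Next I would integrate the resulting identity against a weight $w_X(t)\,dt$, so that the spectral side becomes $\sum_f \tfrac{|m|}{\cosh(\pi t_f)}|\rho_f|^2\,H_X(t_f)$ plus the analogous Eisenstein integral, where $H_X(r)=\int_0^\infty w_X(t)\,\tfrac{\cosh(\pi r)}{\cosh(\pi(t-r))\cosh(\pi(t+r))}\,dt$. The weight $w_X$ is to be chosen with the two properties that (i) $H_X(r)\ge 0$ for all $r\in\mathbb{R}$, so that the Eisenstein contribution and the tempered Maass forms may be discarded, and (ii) $H_X(is)\gg X^{2s}$ uniformly for $s\in\,]0,\tfrac12]$, supplying the amplifier; by (i) and (ii) the exceptional part of the spectral sum is then $\ll$ the full geometric side. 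Using the identity $\cosh(\pi(t-is))\cosh(\pi(t+is))=\tfrac12(\cosh(2\pi t)+\cos(2\pi s))$, the amplification is produced by the pole of the kernel at $t=i(\tfrac12-s)$, which a factor $X^{2it}$ in $w_X$ converts into $X^{2s}$; positivity on the real line is arranged by a suitable symmetrisation, as in the density theorem of Deshouillers--Iwaniec \cite{IwDes}. I would normalise $w_X$ so that $\int_0^\infty w_X(t)\,|\Gamma(1+it)|^2\,dt\asymp 1$, which fixes the size of the diagonal term.

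On the geometric side the diagonal term then contributes the main term, namely $(Q/q)^{1+o(1)}$ in the first estimate and $(q,Q/q)(Q/q)^{1+o(1)}$ in the second (the extra $(q,Q/q)$ coming from the $\varphi((q,Q/q))$ diagonal cusps $r/q$), matching the ``$1$'' in the stated bounds. For the Kloosterman term I would insert the bounds of Corollary \ref{cor:bound-kloosterman-after-CS} and carry out the sum over $c\in Q\mathbb{N}$ (resp. $cqw_q\in Q\mathbb{N}$) exactly as in the proof of Proposition \ref{prop:chi-avg-fourier-bound}, the only change being that the transform $\int_0^T t\,I_0^+(\omega,t)\,dt$ is replaced by $\int_0^\infty w_X(t)\,I_0^+(\omega,t)\,dt$ with $\omega=4\pi|m|/c$. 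The $X$-growth of this amplified transform is what upgrades the Kloosterman contribution of Proposition \ref{prop:chi-avg-fourier-bound} by a factor $X^{1/2}$, yielding the terms $\tfrac{Q}{q}\cdot\tfrac{|m|^{1/2}(q,m)^{1/2}X^{1/2}}{Q}$ and its $(q,Q/q)$-analogue.

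The main obstacle is the construction of $w_X$: one must simultaneously guarantee the positivity $H_X\ge0$ on $\mathbb{R}$, the lower bound $H_X(is)\gg X^{2s}$ on the exceptional strip (including the compensation of the vanishing numerator $\cosh(\pi r)=\cos(\pi s)$ near $s=\tfrac12$), and---most delicately---the transform estimate $\int_0^\infty w_X(t)\,I_0^+(\omega,t)\,dt\ll X^{1/2}\omega^{1/2+o(1)}$ in the relevant range $\omega\ge1$. It is this last bound that ensures the amplifier costs only $X^{1/2}$, rather than the trivial $X^{1}=X^{2\cdot\frac12}$, on the Kloosterman side; establishing it rests on the decay of $I_0^+(\omega,t)$ in $t$ together with the analytic continuation of its Bessel transform into the strip $\Im t\in\,]0,\tfrac12]$, in parallel with the estimate $\int_0^T t\,I_0^+(\omega,t)\,dt\ll\omega^{1/2}$ used for Proposition \ref{prop:chi-avg-fourier-bound}.
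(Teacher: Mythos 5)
Your high-level plan --- apply the trace formula for $\Gamma_{0,\pm1}(Q;Q/q)$ at the cusps $\infty$ and $r/q$, amplify the exceptional spectrum by $X^{2|t_f|}$, and feed in the Kloosterman bounds of Corollary \ref{cor:bound-kloosterman-after-CS} exactly as in Proposition \ref{prop:chi-avg-fourier-bound} --- matches the paper's intent, and your quantitative targets (diagonal $(Q/q)^{1+o(1)}$, amplifier cost $X^{1/2}$ on the Kloosterman side) are the right ones. The genuine gap is that your entire proof has been reduced to the construction of $w_X$, which you do not carry out, and the properties you demand of it are in provable tension. A non-negative weight cannot amplify at all: writing the kernel at $r=is$ as $2/(\cosh(2\pi t)+\cos(2\pi s))\ll\min\{\cos^{-2}(\pi s),e^{-2\pi t}\}\ll\cos^{-2}(\pi s)\,|\Gamma(1+it)|^2$, any $w_X\ge 0$ with your normalisation $\int_0^\infty w_X(t)|\Gamma(1+it)|^2\,dt\asymp 1$ satisfies
\[
H_X(is)=\cos(\pi s)\int_0^\infty \frac{2\,w_X(t)}{\cosh(2\pi t)+\cos(2\pi s)}\,dt\ \ll\ \frac{1}{\cos(\pi s)},
\]
uniformly in $X$. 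So the oscillating component $X^{2it}$ is unavoidable, and since (as you can check) the residue at the pole $t=i(\tfrac12-s)$ only produces $X^{2s-1}$, it must be inserted with amplitude $\asymp X$. At that amplitude, the positivity of $H_X$ on $\mathbb{R}$, the retention of an $X^{o(1)}$ diagonal, and the transform bound $\int w_X(t)I^+_0(\omega,t)\,dt\ll X^{1/2}\omega^{1/2+o(1)}$ (now a saving of a full $X^{1/2}$ over the trivial size) are all delicate cancellation statements which you assert rather than prove; the appeal to a ``symmetrisation as in Deshouillers--Iwaniec'' does not supply them, because the density theorems in \cite{IwDes}, \cite{ANT}, \cite{IwSpecMeth} are not proved by making the spectral weight positive on the tempered spectrum.

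The argument the paper invokes dispenses with that positivity requirement, and this is the missing idea. Apply Proposition \ref{prop:Kuz-Brugge} (for $\kappa=0$, $m=n$, equal cusps, summed over $\chi$ and $r$) with a fixed non-negative \emph{geometric} bump $\phi$ concentrated at $x\asymp 1/X$. For exceptional $t_f=is$ one has $\widehat{\phi}^+_0(is)\gg X^{2s}$ and $\widehat{\phi}^+_0(is)>0$, because $J_{-2s}(x)\asymp (x/2)^{-2s}$ dominates $J_{2s}(x)$ on the support of $\phi$ and $s\le\tfrac14$ by the Selberg gap keeps all constants $\asymp 1$; this is precisely the $X^{-2|\Im t|}$ term in \eqref{eq:trans-est-maass-plus-triv}. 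Hence the exceptional sum is bounded by the absolute value of the remaining terms of the trace formula. The tempered Maa{\ss}, Eisenstein, and holomorphic contributions are \emph{not} discarded by positivity: they are estimated in absolute value, dyadically in $|t|$ and in $k$, using Proposition \ref{prop:chi-avg-fourier-bound} and Proposition \ref{prop:Fourier-holo-diag-upper-bound} together with the decay of the transforms, and they fit inside the claimed bound. Finally the geometric side is supported on $c\asymp|m|X$ with $c\in Q\mathbb{N}$ (resp.\ $cqw_q\in Q\mathbb{N}$), where the two displayed Kloosterman estimates --- the same ones you identified --- give $\ll |m|^{1/2}(q,m)^{1/2}X^{1/2}Q^{-1}$, resp.\ the same with the extra factor $(q,Q/q)$. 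This route, which is how \S\ref{sec:final} itself treats the exceptional spectrum following \cite{FouvryMichel}, uses only ingredients already proven in the paper and requires no new test-function construction.
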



\begin{proof} This follows from the argument on \cite[p. 414, 415]{ANT} or \cite[p. 169]{IwSpecMeth}. One has to simply substitute the bound for the sum of Kloosterman sums given in the proof of Proposition \ref{prop:chi-avg-fourier-bound}, i.e.\@
	\begin{equation*}
		\sum_{c \in \mathcal{C}_{\infty,\infty}} \frac{|S_{\infty,\infty}^{\Gamma_{0,\pm1}(Q;Q/q),1,0}(m,m;c)|}{c} \left( \frac{|m|}{c} \right)^{\frac{1}{2}+o(1)} \ll \frac{|m|^{1/2}(q,m)^{1/2}}{Q} |mQ|^{o(1)},
	\end{equation*}
	and
	\begin{multline*}
		\sum_{\substack{r \mod((q,Q/q))\\ (r,Q)=1}}\sum_{c \in \mathcal{C}_{r/q,r/q}} \frac{|S_{r/q,r/q}^{\Gamma_{0,\pm1}(Q;Q/q),1,0}(mw_q,mw_q;c)|}{c} \left( \frac{|mw_q|}{c} \right)^{\frac{1}{2}+o(1)}  \\ \ll (q,Q/q) \frac{|m|^{1/2}(q,m)^{1/2}}{Q} |mQ|^{o(1)}.
	\end{multline*}
\end{proof}

\section{Holomorphic forms} \label{sec:holmain}

Let $k \in \mathbb{N}$. Similar to the Maa{\ss} case, we introduce the slash operator\footnote{No confusion should arise from this abuse of notation.} for $z \in \mathbb{H}$ and $\gamma \in \SL{2}(\mathbb{R})$
$$
(f|_k \gamma)(z) = j(\gamma,z)^{-k}f(\gamma z)
$$
and the Petersson inner product
\begin{equation}
	\langle f,g \rangle = \int_{\Gamma \backslash \mathbb{H}} f(z) \overline{g(z)} y^k d\mu(z).
	\label{eq:Petersson-Inner}
\end{equation}
A function $f:\mathbb{H}\to \mathbb{C}$ is said to be a \emph{holomorphic cusp form} of weight $k$ with respect to $\Gamma$ and multiplier system $\upsilon$ if it satisfies the following properties:
\begin{enumerate}
	\item $f$ is holomorphic,
	\item $f|_k\gamma = \upsilon(\gamma) f$ for all $\gamma \in \Gamma$,
	\item $f$ is holomorphic at all cusps of $\Gamma$,
	\item $f$ has finite $L^2$-norm with respect to the inner product \eqref{eq:Petersson-Inner}.
\end{enumerate}
For each cusp $\mathfrak{a}$ of $\Gamma$, $L^2$-finite holomorphic forms admit a Fourier expansion
\begin{equation}
	(f|_k\sigma_{\mathfrak{a}})(z) = \sum_{\substack{m \in \mathbb{Z}\\ m+\eta_{\mathfrak{a}} > 0}} \psi_f(\mathfrak{a},m) e((m+\eta_{\mathfrak{a}})z).
	\label{eq:holo-Fourier}
\end{equation}
The space of holomorphic cusp forms of a given weight $k$, Fuchsian group $\Gamma$, and multiplier system $\upsilon$ is finite and we shall denote by $\mathcal{B}^k_{\rm H}(\Gamma,\upsilon)$ an orthonormal basis thereof.
%

\subsection{Petersson trace formula}

\begin{prop}[Petersson trace formula]
	Let $\upsilon$ be a multiplier system of weight $\kappa \in \{0,1\}$ with respect to $\Gamma$. Let $k \ge 2$ be an integer with $k \equiv \kappa \, (2)$, $\mathfrak{a},\mathfrak{b}$ two cusps of $\Gamma$, and $m,n$ two integers satisfying $m+\eta_{\mathfrak{a}}, n+\eta_{\mathfrak{b}} > 0$.
	Then, we have
	\begin{multline*}
		\frac{\Gamma(k-1)}{(4 \pi \sqrt{(m+\eta_{\mathfrak{a}})(n+\eta_{\mathfrak{b}})})^{k-1}} \sum_{f \in \mathcal{B}_{\rm H}^k(\Gamma,\upsilon)} \overline{\psi_f(\mathfrak{a},m)} \psi_f(\mathfrak{b},n) \\
		= \overline{\upsilon(\sigma_{\mathfrak{a}}\sigma_{\mathfrak{b}}^{-1})} \delta_{\mathfrak{a} \equiv \mathfrak{b} \, (\Gamma)}\delta_{m,n} \\
		+ 2 \pi (-1)^{\frac{k-\kappa}{2}} \sum_{c \in \mathcal{C}_{\mathfrak{a},\mathfrak{b}}} \frac{S^{\upsilon,\kappa}_{\mathfrak{a},\mathfrak{b}}(m,n;c)}{c} J_{k-1}\left( \frac{4 \pi \sqrt{(m+\eta_{\mathfrak{a}})(n+\eta_{\mathfrak{b}})}}{c}\right),
	\end{multline*}
	where for $k=2$ the sum on the right-hand side is to be interpreted as the limit
	$$
	\lim_{\sigma\to 1^+} \sum_{c \in \mathcal{C}_{\mathfrak{a},\mathfrak{b}}} \frac{S^{\upsilon,\kappa}_{\mathfrak{a},\mathfrak{b}}(m,n;c)}{c^{2\sigma-1}} J_{k-1}\left( \frac{4 \pi \sqrt{(m+\eta_{\mathfrak{a}})(n+\eta_{\mathfrak{b}})}}{c}\right).
	$$
	\label{prop:Peterssontrace}
\end{prop}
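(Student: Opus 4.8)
The plan is to realise both sides of the asserted identity as the $n$-th Fourier coefficient, at the cusp $\mathfrak{b}$, of a single weight-$k$ Poincaré series attached to $\mathfrak{a}$ and the frequency $m+\eta_{\mathfrak{a}}$, computed in two different ways. Writing $\mathfrak{e}_m(z)=e((m+\eta_{\mathfrak{a}})z)$ and using the holomorphic slash operator $|_k$, I would set
$$
P_m^{\mathfrak{a}}(z,s)=\sum_{\gamma\in\widehat{\Gamma}_{\mathfrak{a}}\backslash\widehat{\Gamma}}\overline{\upsilon(\gamma)}\,\Im(\sigma_{\mathfrak{a}}^{-1}\gamma z)^{s}\,(\mathfrak{e}_m|_k\sigma_{\mathfrak{a}}^{-1}\gamma)(z).
$$
The shift from $m$ to $m+\eta_{\mathfrak{a}}$, together with $e(\eta_{\mathfrak{a}})=\upsilon(\gamma_{\mathfrak{a}})$, makes each summand invariant under $\gamma\mapsto\delta\gamma$ for $\delta\in\widehat{\Gamma}_{\mathfrak{a}}$, so the sum over the coset space is well defined and $P_m^{\mathfrak{a}}(\cdot,s)$ transforms with weight $k$ and multiplier $\upsilon$. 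The factor $\Im(\sigma_{\mathfrak{a}}^{-1}\gamma z)^s$ is a Hecke regulariser: for $\Re(s)$ large the series converges absolutely, and I would then supply a meromorphic continuation, holomorphic at $s=0$, with $P_m^{\mathfrak{a}}(\cdot,0)$ a genuine holomorphic cusp form. For $k\ge 3$ one may take $s=0$ throughout.

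\emph{Spectral side.} Unfolding the Petersson inner product of $P_m^{\mathfrak{a}}(\cdot,s)$ against a basis element $f\in\mathcal{B}^k_{\rm H}(\Gamma,\upsilon)$ collapses the sum over $\widehat{\Gamma}_{\mathfrak{a}}\backslash\widehat{\Gamma}$ to an integral over a cusp neighbourhood of $\mathfrak{a}$. Inserting the Fourier expansion \eqref{eq:holo-Fourier} of $f$ at $\mathfrak{a}$, the $x$-integration isolates the $m$-th coefficient and the $y$-integration is the Gamma integral $\int_0^\infty e^{-4\pi(m+\eta_{\mathfrak{a}})y}y^{k-2+s}\,dy$, so that at $s=0$
$$
\langle P_m^{\mathfrak{a}}(\cdot,0),f\rangle=\frac{\Gamma(k-1)}{(4\pi(m+\eta_{\mathfrak{a}}))^{k-1}}\,\overline{\psi_f(\mathfrak{a},m)}.
$$
Expanding $P_m^{\mathfrak{a}}(\cdot,0)=\sum_f\langle P_m^{\mathfrak{a}}(\cdot,0),f\rangle\,f$ in the finite orthonormal basis and reading off the $n$-th Fourier coefficient at $\mathfrak{b}$ produces precisely the left-hand side of the proposition, carrying the common factor $\Gamma(k-1)(4\pi\sqrt{(m+\eta_{\mathfrak{a}})(n+\eta_{\mathfrak{b}})})^{1-k}$.

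\emph{Geometric side.} Computing the Fourier expansion of $(P_m^{\mathfrak{a}}|_k\sigma_{\mathfrak{b}})(\cdot,s)$ at $\mathfrak{b}$ directly, I would split the coset sum according to the lower-left entry $c$ of $\sigma_{\mathfrak{a}}^{-1}\gamma\sigma_{\mathfrak{b}}$. The contribution $c=0$ survives only when $\mathfrak{a}\equiv\mathfrak{b}$ and yields the diagonal term $\overline{\upsilon(\sigma_{\mathfrak{a}}\sigma_{\mathfrak{b}}^{-1})}\delta_{\mathfrak{a}\equiv\mathfrak{b}}\delta_{m,n}$ via the consistency relation. For $c\neq 0$, grouping the remaining terms into $B$-double cosets recombines the multiplier factors and the arguments $a/c$, $d/c$ of $\sigma_{\mathfrak{a}}^{-1}\gamma\sigma_{\mathfrak{b}}z=\tfrac{a}{c}-\tfrac{1}{c(cz+d)}$ into exactly the Kloosterman sum $S^{\upsilon,\kappa}_{\mathfrak{a},\mathfrak{b}}(m,n;c)$ of \eqref{eq:genKloos}, while the reciprocal translation extends the $x$-integration to all of $\mathbb{R}$, leaving the standard Bessel integral
$$
\int_{-\infty}^{\infty}(x+iy)^{-k}\,e\!\left(-\frac{(m+\eta_{\mathfrak{a}})}{c^2(x+iy)}\right)e(-(n+\eta_{\mathfrak{b}})x)\,dx.
$$
Combined with the $y^{k-2+s}$-integration and continued to $s=0$, this evaluates to a multiple of $J_{k-1}(4\pi\sqrt{(m+\eta_{\mathfrak{a}})(n+\eta_{\mathfrak{b}})}/c)$ with a factor $2\pi/c$ and a phase $(-i)^k=(-i)^\kappa(-1)^{(k-\kappa)/2}$; since the $(-i)^\kappa$ is already built into the normalisation of \eqref{eq:genKloos}, the explicit phase is $(-1)^{(k-\kappa)/2}$, matching the statement.

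\emph{Main obstacle.} The genuinely delicate point is $k=2$. There $P_m^{\mathfrak{a}}(\cdot,0)$ converges only conditionally and the series $\sum_c S^{\upsilon,\kappa}_{\mathfrak{a},\mathfrak{b}}(m,n;c)\,c^{-1}J_1(\cdots)$ need not converge absolutely for a general Fuchsian group: the only universally available estimate $|S^{\upsilon,\kappa}_{\mathfrak{a},\mathfrak{b}}(m,n;c)|\ll c$ gives terms of size $\ll c^{-1}$. The regulariser $\Im(\sigma_{\mathfrak{a}}^{-1}\gamma z)^s$ makes every manipulation—absolute convergence, termwise Fourier expansion, and interchange of summation with integration—legitimate for $\Re(s)$ large, so the remaining work is to establish the meromorphic continuation of both the spectral and the geometric expression to a neighbourhood of $s=0$, to verify holomorphy there, and to match the continued geometric side with the Hecke-regularised sum $\lim_{\sigma\to 1^+}\sum_c S^{\upsilon,\kappa}_{\mathfrak{a},\mathfrak{b}}(m,n;c)\,c^{1-2\sigma}J_1(\cdots)$ appearing in the statement. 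For $k\ge 3$ none of this is needed and the two computations may simply be equated.
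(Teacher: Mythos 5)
Your proposal is correct and is essentially the same argument as the paper's: the paper proves this proposition purely by citation to \cite[Thm 3.7.6]{thesis} (compare \cite[Thm 9.6]{IwSpecMeth}, \cite[\S 5]{MFaF}), and those sources establish it exactly via the holomorphic Poincar\'e-series unfolding you describe --- spectral side by unfolding the Petersson inner product against the finite orthonormal basis, geometric side by the Bruhat/double-coset decomposition producing the Kloosterman sums of \eqref{eq:genKloos} and the $J_{k-1}$-Bessel integral, with Hecke's $\Im(\sigma_{\mathfrak{a}}^{-1}\gamma z)^{s}$ regularisation reserved for $k=2$. The one piece you state as a programme rather than carry out --- the meromorphic continuation of both sides to $s=0$ and the identification of the continued geometric side with the $\lim_{\sigma\to 1^{+}}$ interpretation of the Kloosterman--Bessel sum when $k=2$ --- is precisely the technical content supplied in those references, so your outline matches the intended proof rather than deviating from it.
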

\begin{proof} This is \cite[Thm 3.7.6]{thesis}. Compare also \cite[Thm 9.6]{IwSpecMeth} and \cite[\S 5]{MFaF}.
\end{proof}

\subsection{Bounds on Fourier coefficients} Recall the notation from \ref{sec:twisted-Fourier-Maass}. 

\begin{prop} Let $\kappa \in \{0,1\}$, $a,q,Q,m, \in \mathbb{N}$ with $q \mid Q$ and $(a,Q/q)=1$. Then, for any $K \in \mathbb{N}$, we have
	\begin{multline*}
	\sum_{k=1}^{K} \frac{\Gamma(\kappa+2k)}{(4 \pi m)^{\kappa+2k-1}} \sum_{\substack{\chi \mod Q/q \\ \chi(-1)=(-1)^{\kappa}}} \sum_{f \in \mathcal{B}_{\rm H}^{\kappa+2k}(\Gamma_0(Q),\chi)} |\psi_f(\infty,m)|^2 \\
	\ll
	 \left(\frac{Q}{q}\right)^{1+o(1)}\left(K^2+ \frac{m^{2/3+o(1)}(q,m)^{1/2}}{Q^{7/6-o(1)}} \right)
	\end{multline*}
	and
	\begin{multline*}
		\sum_{k=1}^{K} \frac{\Gamma(\kappa+2k)}{(4 \pi m w_q)^{\kappa+2k-1}} \sum_{\substack{\chi \mod Q/q \\ \chi(-1)=(-1)^{\kappa}}} \sum_{\substack{r \mod((q,Q/q))\\ (r,Q)=1}}  \sum_{\substack{f \in \mathcal{B}_{\rm H}^{\kappa+2k}(\Gamma_0(Q),\chi)}} |\psi_f(r/q,mw_q)|^2 \\
		\ll \left(\frac{Q}{q}\right)^{1+o(1)} (q,Q/q) \left(K^2 +  \frac{m^{2/3+o(1)}(q,m)^{1/2}}{(q^2,Q)^{1/6}Q^{1-o(1)}}  \right).
	\end{multline*}
	\label{prop:Fourier-holo-diag-upper-bound}
\end{prop}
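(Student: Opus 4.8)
The plan is to run the holomorphic analogue of the proof of Proposition \ref{prop:chi-avg-fourier-bound}, replacing the Kuznetsov pre-trace formula by the Petersson trace formula (Proposition \ref{prop:Peterssontrace}) and the integration in $t$ by a summation over the weight. Concretely, I would apply Proposition \ref{prop:Peterssontrace} to the group $\Gamma_{0,\pm 1}(Q;Q/q)$ with multiplier $\sign_{Q/q}^{\kappa}$, equal cusps $\mathfrak{a}=\mathfrak{b}\in\{\infty,r/q\}$, equal indices $m=n$ (resp.\ $mw_q=nw_q$), and weight $\kappa+2k\ge 2$. Using the orthogonal decomposition \eqref{eq:basis-choice}, whose index $I_{Q/q}\asymp(Q/q)^{1+o(1)}$ renders the spectral side of the trace formula equal to $I_{Q/q}^{-1}$ times the desired sum over Dirichlet characters $\chi\bmod Q/q$ and over $\mathcal{B}_{\mathrm H}^{\kappa+2k}(\Gamma_0(Q),\chi)$, I would multiply the identity by the nonnegative weight $(\kappa+2k-1)$ — which converts $\Gamma(\kappa+2k-1)$ into $\Gamma(\kappa+2k)$ — and sum over $k=1,\dots,K$ (and, for the second estimate, over the residues $r$). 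Since the holomorphic spectrum is purely discrete, the spectral side is a manifestly nonnegative quantity, so by positivity it is bounded by the sum of the absolute values of the diagonal and the Kloosterman terms on the geometric side.

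The diagonal term $\overline{\upsilon(\cdots)}\,\delta_{\mathfrak a\equiv\mathfrak b}\,\delta_{m,n}$, weighted by $(\kappa+2k-1)$ and summed over $k\le K$, contributes $\sum_{k\le K}(\kappa+2k-1)\ll K^2$; for the cusp $r/q$ the additional sum over $r$ gives the factor $\varphi((q,Q/q))\ll(q,Q/q)$. After multiplication by $I_{Q/q}\asymp(Q/q)^{1+o(1)}$ these reproduce exactly the main terms $(Q/q)^{1+o(1)}K^2$ and $(Q/q)^{1+o(1)}(q,Q/q)K^2$. It remains to bound the Kloosterman contribution, which, after interchanging the (finite) $k$-sum with the $c$-sum, reduces to estimating for each $c$ the weighted Bessel sum
\[
  \Sigma_K(x)=\sum_{k=1}^{K}(-1)^{k}(\kappa+2k-1)\,J_{\kappa+2k-1}(x),\qquad x=\tfrac{4\pi m}{c}\ \bigl(\text{resp. }\tfrac{4\pi m}{cq}\bigr),
\]
the sign $(-1)^{k}$ arising from the factor $(-1)^{(k-\kappa)/2}$ in Proposition \ref{prop:Peterssontrace}.

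The heart of the matter is the estimate for $\Sigma_K(x)$, and this is the step I expect to require the most care. I would use the recurrence $\nu J_\nu(x)=\tfrac{x}{2}\bigl(J_{\nu-1}(x)+J_{\nu+1}(x)\bigr)$ to rewrite $(\kappa+2k-1)J_{\kappa+2k-1}(x)=\tfrac{x}{2}\bigl(J_{\kappa+2k-2}(x)+J_{\kappa+2k}(x)\bigr)$; the alternating sign then makes the sum telescope, leaving
\[
  \Sigma_K(x)=\frac{x}{2}\bigl(-J_{\kappa}(x)+(-1)^{K}J_{2K+\kappa}(x)\bigr).
\]
Applying the uniform bound $|J_\nu(x)|\ll x^{-1/3}$, valid for all orders $\nu\ge 0$ and essentially sharp near the turning point $\nu\approx x$ — which is exactly where the high-order term $J_{2K+\kappa}$ can be large — yields $|\Sigma_K(x)|\ll x^{2/3}$, a bound independent of $K$. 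It is precisely this $x^{2/3}$, rather than the $x^{1/2}$ produced by the fixed-order term $J_\kappa$ alone, that accounts for the exponent $2/3$ in $m$, in contrast to the exponent $1/2$ in the Maa{\ss} estimate of Proposition \ref{prop:chi-avg-fourier-bound}.

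Finally, I would insert $|\Sigma_K(x)|\ll x^{2/3}$ into the $c$-sum and bound the Kloosterman sums by Corollary \ref{cor:bound-kloosterman-after-CS}. For the cusp $\infty$, writing $c=Qc'$ (so $c/(Q/q)=qc'$ and $x=4\pi m/(Qc')$) and using the square-root-cancellation branch $|S_{\infty,\infty}(m,m;c)|\ll\tau(c)(qc',m)^{1/2}(Qc')^{1/2}$, the summand becomes $\ll\tau(Qc')(qc',m)^{1/2}\,m^{2/3}Q^{-7/6}c'^{-7/6}$; the exponent $-7/6<-1$ makes the $c'$-sum convergent while the gcd and divisor factors contribute only $m^{o(1)}(q,m)^{1/2}$, so that after multiplying by $I_{Q/q}$ one obtains $(Q/q)^{1+o(1)}m^{2/3+o(1)}(q,m)^{1/2}Q^{-7/6+o(1)}$, as claimed. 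The cusp $r/q$ is handled identically, now with $x=4\pi m/(cq)$ and the $r$-averaged bound of Corollary \ref{cor:bound-kloosterman-after-CS}, whose factor $(c,Q/q)$ and the appearance of $(q^2,Q)=q(q,Q/q)$ produce the additional $(q,Q/q)$ and the $(q^2,Q)^{1/6}$ in the denominator. The only technical point is the weight-$2$ case ($\kappa=0$, $k=1$), where Proposition \ref{prop:Peterssontrace} holds only as a limit $\sigma\to1^+$; but $J_1(y)\ll\min(y,y^{-1/2})$ together with the Weil bound makes even that Kloosterman–Bessel series absolutely convergent, so the limit equals the sum and the interchange of summation and the telescoping above are fully justified.
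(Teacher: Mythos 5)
Your proposal is correct and follows essentially the same route as the paper: the Petersson trace formula on $\Gamma_{0,\pm 1}(Q;Q/q)$ with multiplier $\sign_{Q/q}^{\kappa}$ and the basis choice \eqref{eq:basis-choice}, weighting by $\kappa+2k-1$, telescoping the alternating Bessel sum via the recurrence to $\frac{x}{2}\bigl((-1)^K J_{\kappa+2K}(x)-J_\kappa(x)\bigr)$, invoking the uniform bound $|J_\nu(x)|\ll x^{-1/3}$, and finishing with Corollary \ref{cor:bound-kloosterman-after-CS} and the $c$-sum. The only (harmless) deviations are that the paper keeps the slightly sharper $\min\{x,x^{2/3}\}$ in the $c$-sum, which yields a stronger intermediate three-term minimum of which the proposition quotes one branch, and that your explicit justification of the weight-$2$ limit is left implicit in the paper.
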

\begin{proof} The proof is similar to the proof of Proposition \ref{prop:chi-avg-fourier-bound}. We make use of the Petersson trace formula (Prop. \ref{prop:Peterssontrace}) for the group $\Gamma_{0,\pm 1}(Q,Q/q)$ with multiplier system $\sign_{Q/q}^{\kappa}$, equal cusps and Fourier coefficients, and summed up over the relevant cusps. We further multiply the equality with $k-1$ and sum up over $k$ in order to match the spectral expression in the Proposition to be proven. On the geometric side, we note that
	$$\begin{aligned}
		\sum_{k=1}^{K} (-1)^k (\kappa+2k-1) J_{\kappa+2k-1}(\omega) &= \sum_{k=1}^{K} (-1)^k \frac{\omega}{2} \left( J_{\kappa+2k-2}(\omega)+J_{\kappa+2k}(\omega) \right) \\
		&= \frac{\omega}{2} \left( (-1)^{K}J_{\kappa+2K}(\omega)    - J_{\kappa}(\omega) \right).
	\end{aligned}$$
We shall make use of the following bound for the $J$-Bessel function valid for $k,\omega>0$
$$
|J_k(\omega)| \ll \min\{1,\omega^{-1/3}\},
$$
which follows from the two inequalities
\begin{align*}
	|J_{k}(\omega)| &\ll (\tfrac{\omega}{2})^k (k+2)^{\frac{1}{2}-k} \exp(\tfrac{1}{4}\omega^2), \\
|J_{k}(\omega)| &\ll \omega^{-\frac{1}{4}}(|\omega-k|+k^{\frac{1}{3}})^{-\frac{1}{4}}.
\end{align*}
The former follows from the Taylor expansion and the latter is recorded in \cite[Eq. 2.11']{lowlying} and is the combination of various upper bounds on the Bessel function in various ranges (cf. \cite{ToBF}). Finally, we also make use of Corollary \ref{cor:bound-kloosterman-after-CS} to bound the term involving the Kloosterman sums. We find
	\begin{align*}
	&\sum_{\substack{c \in Q \mathbb{N}}} \frac{|S_{\infty,\infty}^{\sign_{Q/q}^{\kappa},\kappa}(m,m;c)|}{c}  \times \min\left\{ \left( \frac{m}{c} \right),  \left( \frac{m}{c} \right)^{2/3}  \right\} \\
	&\hspace{5mm}\ll \frac{q^{1/2}(q,m)^{1/2}}{Q^{1-o(1)}} \sum_{\substack{e \mid m/(q,m)}} \frac{1}{e^{1/2-o(1)}} \sum_{\substack{c =1}}^{\infty} \frac{\min\{\frac{q}{(q,m)}c, \frac{Q}{q}\}^{1/2}}{c^{1/2-o(1)}} \min\left\{  \left( \frac{m}{Qc} \right),  \left( \frac{m}{Qc} \right)^{2/3}  \right\} \\
	&\hspace{5mm}\ll \frac{m^{1+o(1)}(q,m)^{1/2}}{Q^{3/2-o(1)}} \min\left\{ 1, \left(\frac{q^2}{Q(q,m)} \right)^{1/2}, \left(\frac{Q}{m}\right)^{1/3} \right\}.
\end{align*}
and
\begin{align*} & \sum_{cqw_q \in Q \mathbb{N}} \frac{1}{cqw_q} \left| \sum_{r \mod((q,Q/q))} S_{r/q,r/q}^{\sign_{Q/q}^{\kappa},\kappa}(mw_q,mw_q;cqw_q) \right| \times \min\left\{ \left( \frac{m}{cq} \right)  , \left( \frac{m}{cq} \right)^{2/3} \right\} \\
	& \hspace{5mm}\ll \frac{(q,Q/q)}{Q^{1-o(1)}} \sum_{e \mid w_q} e^{o(1)} \sum_{f \mid m/(q,m)} \frac{1}{f^{1/2-o(1)}} \sum_{c=1}^{\infty} \frac{q^{1/2}(q,m)^{1/2}}{c^{1/2-o(1)}} \min\left\{ \frac{cq}{(q,m)}, (q,Q/q)  \right\}^{1/2} \\ & \hspace{7cm}  \times \min \left\{ \left( \frac{m}{c(q^2,Q)} \right) , \left( \frac{m}{c(q^2,Q)} \right)^{2/3} \right\} \\
	& \hspace{5mm} \ll \frac{|m|^{1+o(1)}(q,Q/q)^{1/2}(q,m)^{1/2}}{q^{1/2}Q^{1-o(1)}} \min \left\{ 1, \left(\frac{q}{(q,Q/q)(q,m)}\right)^{1/2} ,\left( \frac{q (q,Q/q)}{m} \right)^{1/3}  \right\}.
\end{align*}
\end{proof}

\section{Kloosterman sums in arithmetic progression} \label{sec:final}

Let $a,q,Q \in \mathbb{N}$ be integers satisfying $q \mid Q$, $(a,Q/q)=1$. Let $m,n \in \mathbb{Z}\backslash\{0\}$ be two further non-zero integers. We wish to bound
\begin{equation}
\sum_{\substack{c \le C \\ c \equiv aq \mod(Q)}} \frac{S(m,n;c)}{c}.
\label{eq:Kloosterman-sum-sharp-to-bound}
\end{equation}
In an attempt to improve later displays, we shall introduce the notation
$$
F \prec G \quad \Leftrightarrow \quad  F \ll |mnQC|^{o(1)} G.
$$
Since $S(m,n;c)=S(-m,-n;c)$, we may and shall assume that $m>0$. Using the Weil bound \eqref{eq:Ex-Weilbound}, we are able to bound \eqref{eq:Kloosterman-sum-sharp-to-bound} by
\begin{equation}\begin{aligned}
		\sum_{\substack{c \le C \\ c \equiv aq \mod(Q)}} \frac{|S(m,n;c)|}{c} &\prec \frac{(m,n,q)^{1/2}}{q^{1/2}} \sum_{\substack{e \mid (m,n)/(m,n,q) \\ (e,Q/q)=1}} e^{1/2} \sum_{\substack{c \le C/q \\ c \equiv a \mod(Q/q) \\ e \mid c}} \frac{1}{c^{1/2}} \\
		& \prec (m,n,q)^{1/2} \left( \frac{1}{q^{1/2}}+\frac{C^{1/2}}{Q} \right).
		\label{eq:Klooster-AP-trivial}
\end{aligned}\end{equation}

In order to improve upon this for large values of $C$, we shall compare a dyadic sharp sum with a smooth one and apply the Bruggeman--Kuznetsov trace formula to relate the sum of Kloosterman sum to a spectral sum. The following version is recorded in \cite[Thms 3.10.1-3.10.3]{thesis}, compare also \cite{Proskurin} \cite[\S 2.1.4]{MR2382859}, and \cite[Thms 9.4 \& 9.8]{IwSpecMeth}.

\begin{prop}[Bruggeman--Kuznetsov trace formula]
	Let $\upsilon$ be a multiplier system of weight $\kappa \in \{0,1\}$ with respect to $\Gamma$, $\mathfrak{a},\mathfrak{b}$ two cusps of $\Gamma$, and $m,n$ two integers satisfying $(m+\eta_{\mathfrak{a}})(n+\eta_{\mathfrak{b}}) \neq 0$. Suppose  $\epsilon = \frac{1}{2} (\sign(m+\eta_{\mathfrak{a}})+\sign(n+\eta_{\mathfrak{b}})) \in\{0,1\}$. Further let $\phi : \mathbb{R}^+_0 \to \mathbb{C}$ be a function with continuous derivatives up to third order satisfying
	$$
	\phi(0)=\phi'(0)=0, \quad \phi(x) \ll (x+1)^{-1-\delta}, \quad \phi'(x),\phi''(x), \phi'''(x) \ll (x+1)^{-2-\delta}
	$$
	for some $\delta >0$. Then, we have
	\begin{multline*}
		\sum_{c \in \mathcal{C}_{\mathfrak{a},\mathfrak{b}}} \frac{S^{\upsilon,\kappa}_{\mathfrak{a},\mathfrak{b}}(m,n;c)}{c} \phi\left( \frac{4 \pi \sqrt{|(m+\eta_{\mathfrak{a}})(n+\eta_{\mathfrak{b}})|}}{c} \right) \\= \epsilon\mathcal{H}^{\upsilon,\kappa}_{\mathfrak{a},\mathfrak{b}}(m,n;\phi)+ \mathcal{M}^{\upsilon,\kappa,\epsilon,\pm}_{\mathfrak{a},\mathfrak{b}}(m,n;\phi)+\mathcal{E}^{\upsilon,\kappa,\epsilon,\pm}_{\mathfrak{a},\mathfrak{b}}(m,n;\phi),
	\end{multline*}
	where sign $\pm$ is according to the sign of $(m+\eta_{\mathfrak{a}})(n+\eta_{\mathfrak{b}})$,
	$$\begin{aligned}
		\mathcal{H}^{\upsilon,\kappa}_{\mathfrak{a},\mathfrak{b}}(m,n;\phi) &= \frac{1}{\pi} \sum_{\substack{k \equiv \kappa \,(2)\\ k \ge 2}} \sum_{f \in \mathcal{B}_{\rm H}^k(\Gamma,\upsilon)} \frac{(-1)^{\frac{k-\kappa}{2}}\Gamma(k)}{(4\pi \sqrt{|(m+\eta_{\mathfrak{a}})(n+\eta_{\mathfrak{b}})|})^{k-1}} \overline{\psi_f(\mathfrak{a},m)}\psi_f(\mathfrak{b},n) \widetilde{\phi}(k-1),\\
		\mathcal{M}^{\upsilon,\kappa,\epsilon,\pm}_{\mathfrak{a},\mathfrak{b}}(m,n;\phi) &= 4 \sum_{f \in \mathcal{B}_\mathrm{M}^{\kappa}(\Gamma,\upsilon)} \frac{\sqrt{|(m+\eta_{\mathfrak{a}})(n+\eta_{\mathfrak{b}})|}}{\cosh(\pi t_f)} \overline{\rho_f(\mathfrak{a},m)} \rho_f(\mathfrak{b},n) \widehat{\phi}^{\pm}_{\epsilon\kappa}(t_f),\\
		\mathcal{E}^{\upsilon,\kappa,\epsilon,\pm}_{\mathfrak{a},\mathfrak{b}}(m,n;\phi) 
		&=\frac{1}{\pi} \sum_{\mathrm{E} \in \mathcal{B}_{\mathrm{E}}^{\kappa}(\Gamma,\upsilon)} \int_{-\infty}^{\infty} \frac{\sqrt{|(m+\eta_{\mathfrak{a}})(n+\eta_{\mathfrak{b}})|}}{\cosh(\pi r)} \overline{\rho_{\mathrm{E}}(\mathfrak{a},m;r)} \rho_{\mathrm{E}}(\mathfrak{b},n;r) \widehat{\phi}^{\pm}_{\epsilon\kappa}(r) dr,
	\end{aligned}
	$$
	and
	$$\begin{aligned}
		\widetilde{\phi}(t) &= \int_0^{\infty} J_t(x) \phi(x) \frac{dx}{x}, \\
		\widehat{\phi}^{+}_{\kappa}(t) &= \frac{\pi i}{2} \frac{(-it)^{\kappa}}{\sinh(\pi t)} \int_0^{\infty} \left[ J_{2it}(x)-(-1)^{\kappa}J_{-2it}(x) \right]\phi(x) \frac{dx}{x}, \\
		\widehat{\phi}^{-}_{0}(t) &= 2 \cosh(\pi t) \int_0^{\infty} K_{2it}(x) \phi(x) \frac{dx}{x}.
	\end{aligned}
	$$
	\label{prop:Kuz-Brugge}
\end{prop}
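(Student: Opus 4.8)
The plan is to bootstrap the general-weight identity from the two specializations already available---the Kuznetsov pre-trace formula (Proposition~\ref{prop:Kuz-trace}) for the Maa{\ss} and Eisenstein spectrum, and the Petersson formula (Proposition~\ref{prop:Peterssontrace}) for the holomorphic spectrum---by expanding the test function $\phi$ against the completeness relation of the relevant Bessel transform. First I would use the conjugation symmetry $\overline{S^{\upsilon,\kappa}_{\mathfrak{a},\mathfrak{b}}(m,n;c)}=S^{\upsilon,\kappa}_{\mathfrak{b},\mathfrak{a}}(n,m;c)$ from Remark~\ref{rem:Kloos-conj} to fold the case of two negative frequencies onto that of two positive ones, so that indeed $\epsilon\in\{0,1\}$ and the sign $\pm$ is that of the product $(m+\eta_{\mathfrak{a}})(n+\eta_{\mathfrak{b}})$. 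The analytic engine is the inversion formula for the forward transforms $\phi\mapsto\widehat{\phi}^{\pm}$ and $\phi\mapsto\widetilde{\phi}$ defined in the statement: these are, up to normalization, the Sears--Titchmarsh transform (built from $J_{\pm 2it}$), the Kontorovich--Lebedev transform (built from $K_{2it}$), and the Hankel-type transform $\widetilde{\phi}$, and each admits a completeness relation reconstructing $\phi$.

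In the positive case ($\epsilon=1$, product positive) the Sears--Titchmarsh inversion expresses
$$\phi(x)=\int_{-\infty}^{\infty}\widehat{\phi}^{+}_{\epsilon\kappa}(t)\,\mathfrak{J}^{+}_{t}(x)\,dt+\sum_{\substack{k\equiv\kappa\,(2)\\ k\ge 2}}c_{k}\,\widetilde{\phi}(k-1)\,J_{k-1}(x),$$
where the continuous kernel $\mathfrak{J}^{+}_{t}$ coincides, up to the Gamma-factor, with the pre-trace kernel $I^{+}_{\epsilon\kappa}(\cdot,t)$ and the $c_{k}$ are explicit constants. I would substitute this into the geometric sum $\sum_{c}\frac{S^{\upsilon,\kappa}_{\mathfrak{a},\mathfrak{b}}(m,n;c)}{c}\phi(\tfrac{4\pi\sqrt{|(m+\eta_{\mathfrak{a}})(n+\eta_{\mathfrak{b}})|}}{c})$ and interchange the $t$-integral and the $k$-sum with the sum over $c$. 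For each fixed $t$ Proposition~\ref{prop:Kuz-trace} rewrites the inner $c$-sum as its spectral side minus a diagonal term; integrating the spectral part against $\widehat{\phi}^{+}_{\epsilon\kappa}(t)$ reconstructs $\mathcal{M}^{\upsilon,\kappa,\epsilon,+}_{\mathfrak{a},\mathfrak{b}}(m,n;\phi)+\mathcal{E}^{\upsilon,\kappa,\epsilon,+}_{\mathfrak{a},\mathfrak{b}}(m,n;\phi)$, while for each fixed $k$ Proposition~\ref{prop:Peterssontrace} rewrites the inner $c$-sum as its holomorphic spectral side minus a diagonal term, and the $J_{k-1}$-kernels match term by term and sum over $k$ to $\mathcal{H}^{\upsilon,\kappa}_{\mathfrak{a},\mathfrak{b}}(m,n;\phi)$; the diagonal terms are collected below.

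In the negative case ($\epsilon=0$, product negative) I would instead invoke the Kontorovich--Lebedev inversion, whose completeness relation has only a continuous part; the same substitution, now using the kernel $I^{-}_{0}(\cdot,t)$ in Proposition~\ref{prop:Kuz-trace}, yields $\mathcal{M}^{\upsilon,\kappa,0,-}_{\mathfrak{a},\mathfrak{b}}(m,n;\phi)+\mathcal{E}^{\upsilon,\kappa,0,-}_{\mathfrak{a},\mathfrak{b}}(m,n;\phi)$, and there is no discrete contribution---consistent with the factor $\epsilon=0$ in front of $\mathcal{H}$ and with holomorphic forms carrying only positive-frequency coefficients. In either case the two families of diagonal terms $\overline{\upsilon(\sigma_{\mathfrak{a}}\sigma_{\mathfrak{b}}^{-1})}\delta_{\mathfrak{a}\equiv\mathfrak{b}\,(\Gamma)}\delta_{m,n}$ supplied by Propositions~\ref{prop:Kuz-trace} and~\ref{prop:Peterssontrace} must be accounted for; by the completeness relation their total collapses to a boundary functional of $\phi$ at the origin, which vanishes precisely because $\phi(0)=\phi'(0)=0$, leaving the diagonal-free identity as stated. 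A self-contained alternative, taken in the cited references, instead computes the $n$-th Fourier coefficient at $\mathfrak{b}$ of a Poincar\'e series attached to $\mathfrak{a}$ and $m$ with weight tailored to $\phi$: unfolding gives the diagonal and the Kloosterman side, while the weight-$\kappa$ spectral decomposition---Maa{\ss}, Eisenstein, and the holomorphic discrete series---gives $\mathcal{M}+\mathcal{E}+\mathcal{H}$, and the transforms emerge from the two evaluations of a single Bessel integral.

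The hard part will be the analytic bookkeeping rather than the formal matching. The hypotheses $\phi(0)=\phi'(0)=0$ and $\phi(x)\ll(x+1)^{-1-\delta}$ together with the derivative bounds are exactly what place $\phi$ in the domain of the Bessel inversion and render the integrals defining $\widehat{\phi}^{\pm}$ and $\widetilde{\phi}$ absolutely convergent, but $\phi$ is far from Schwartz, so the inversion must be justified by truncation and careful estimation of the Bessel tails, and every interchange of the $c$-sum with the spectral integral or sum has to be licensed by absolute convergence---this is where the growth of the Kloosterman sums and the counting of $c\in\mathcal{C}_{\mathfrak{a},\mathfrak{b}}$ enter. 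The genuinely delicate point is the weight-$2$ ($k=2$) term of $\mathcal{H}$ in the positive case: the associated holomorphic Poincar\'e series and the matching Bessel series are only conditionally convergent, which forces the $\lim_{\sigma\to 1^{+}}$ regularization singled out in Proposition~\ref{prop:Peterssontrace} and must be propagated consistently through all the interchanges of limits.
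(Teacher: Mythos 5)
The paper never proves this proposition in-house: it is quoted verbatim from the author's thesis (Thms 3.10.1--3.10.3), with Proskurin, Kuznetsov and Iwaniec (Thms 9.4 \& 9.8) cited for comparison, and your sketch reconstructs precisely the argument of those sources --- fold the doubly-negative case via Remark \ref{rem:Kloos-conj}, expand $\phi$ by the Sears--Titchmarsh/Neumann completeness (same sign) or the Kontorovich--Lebedev completeness (opposite sign), insert the resulting kernels into the pre-trace formula (Proposition \ref{prop:Kuz-trace}) and the Petersson formula (Proposition \ref{prop:Peterssontrace}), and regularize the $k=2$ term --- so in approach you are exactly aligned with the paper's source of proof. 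The one point to correct is the diagonal bookkeeping: the two families of diagonal terms do not collapse to a boundary functional annihilated by $\phi(0)=\phi'(0)=0$; rather, the holomorphic and the Maa{\ss}/Eisenstein diagonal contributions cancel \emph{against each other} through classical Bessel-kernel identities, e.g.\ the telescoping identity $\sum_{j\ge 1}(2j-1)(-1)^{j-1}J_{2j-1}(x)=\tfrac{x}{2}J_0(x)$ together with its continuous-spectrum companion, an exact cancellation valid for any $\phi$ for which both sides converge. The hypotheses $\phi(0)=\phi'(0)=0$ and the stated decay of $\phi,\phi',\phi'',\phi'''$ instead serve to place $\phi$ in the domain of the Bessel inversion and to force enough decay of $\widetilde{\phi}$ and $\widehat{\phi}^{\pm}$ in the spectral parameter that the spectral sums converge absolutely (via Weyl's law), which is what licenses all your interchanges of summation.
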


Let $\phi: \mathbb{R}^+ \to \mathbb{R}$ be a smooth compactly supported bump function satisfying
\begin{enumerate}
	\item $\phi(x)=1$ for $2\pi \sqrt{|mn|}/C \le x \le 4\pi \sqrt{|mn|}/C$,
	\item $\phi(x)=0$ if either $x \le 2 \pi \sqrt{|mn|}/(C+B)$ or $x \ge 4 \pi \sqrt{|mn|}/(C-B)$,
	\item $\|\phi '\|_1 \ll 1$ and $\|\phi ''\|_1 \ll C/(XB)$,
\end{enumerate}
where $X=4\pi \sqrt{|mn|}/C$ and $1 \le B \le C/2$. The difference between the dyadic sharp and smooth sum
$$
\sum_{\substack{C  <  c \le 2C \\ c \equiv aq \mod(Q)}} \frac{S(m,n;c)}{c}-\sum_{c \equiv aq \mod(Q)} \frac{S(m,n;c)}{c} \phi\left(\frac{4 \pi \sqrt{|mn|}}{c}\right)
$$
may then be bounded using the Weil bound \eqref{eq:Ex-Weilbound}:
\begin{equation}\begin{aligned}
	\sum_{\substack{C \le c < C+A \\ c \equiv aq \mod(Q)}} \frac{|S(m,n;c)|}{c} &\prec \frac{(m,n,q)^{1/2}}{q^{1/2}} \sum_{\substack{e \mid (m,n)/(m,n,q) \\ (e,Q/q)=1}} e^{1/2} \sum_{\substack{C/q \le c < (C +A)/q \\ c \equiv a \mod(Q/q) \\ e \mid c}} \frac{1}{c^{1/2}} \\
	& \prec \min\left\{ \frac{(m,n,q)^{1/2}}{q^{1/2}} \left(1+\frac{A}{Q} \right) , \frac{(m,n)}{C^{1/2}}+\frac{(m,n,q)^{1/2}A}{QC^{1/2}}  \right\},
	\label{eq:Klooster-AP-trivial-short}
\end{aligned}\end{equation}
where $1\le A \le C$. We now bring the dyadic smooth sum into a shape where we may apply the Bruggeman--Kuznetsov trace formula. We proceed by making use of the explicit computations of Kloosterman sums in Lemma \ref{lem:Gamma0Q-Kloosterman}.

\begin{equation}\begin{aligned}
	&\sum_{c \equiv aq \mod(Q)} \frac{S(m,n;c)}{c} \phi\left( \frac{4 \pi \sqrt{|mn|}}{c} \right) \\
	& \hspace{6mm} = \frac{1}{\varphi(Q/q)} \sum_{\chi \mod(Q/q)} \chi(a) \sum_{\substack{(c,Q/q)=1}} \overline{\chi(c)} \frac{S(m,n;cq)}{cq} \phi\left(\frac{4 \pi \sqrt{|mn|}}{cq}\right)  \\ 
	& \hspace{6mm} = \frac{\sqrt{w_q}}{\varphi(Q/q)} \sum_{\kappa \in \{0,1\}} (-i)^{\kappa} \sum_{\substack{\chi \mod(Q/q)\\\chi(-1)=(-1)^{\kappa}}} \chi(a)  \\
	& \hspace{1.4cm}  \times \sum_{\substack{r \mod((q,Q/q))\\ (r,Q)=1}} \sum_{\substack{cq\sqrt{w_q} \in \mathcal{C}_{\infty,r/q}}} \frac{S_{\infty,r/q}^{\chi,\kappa}(m,nw_q;cq\sqrt{w_q})}{cq\sqrt{w_q}} \phi\left(\frac{4 \pi \sqrt{|mnw_q|}}{cq\sqrt{w_q}}\right)\\
	& \hspace{6mm} = \frac{\sqrt{w_q}}{\varphi(Q/q)} \sum_{\kappa \in \{0,1\}} (-i)^{\kappa} \sum_{\substack{\chi \mod(Q/q)\\\chi(-1)=(-1)^{\kappa}}} \chi(a) \\
	& \hspace{1.4cm} \times \sum_{\substack{r \mod((q,Q/q))\\ (r,Q)=1}} \left( \epsilon \mathcal{H}_{\infty,r/q}^{\chi,\kappa}(m,nw_q;\phi)+\mathcal{M}_{\infty,r/q}^{\chi,\kappa,\epsilon,\pm}(m,nw_q;\phi)+\mathcal{E}_{\infty,r/q}^{\chi,\kappa,\epsilon,\pm}(m,nw_q;\phi)  \right).
	\label{eq:Kloos-dyadic-smooth}
\end{aligned}
\end{equation}
We shall deal with these terms in the remaining sections.

\subsection{Transform estimates}

The next Lemma provides us with bounds on the integral transforms of $\phi$.

\begin{lem} With $\phi$ given as above and $\kappa \in \{0,1\}$, we have for any $\delta>0$
	\begin{align}
		|\widetilde{\phi}(t)| & \ll  t^{-1} \log(2t)^{2/3}, \quad \forall t \ge 1, \label{eq:trans-est-hol-triv} \\ |\widetilde{\phi}(t)| & \ll t^{-1/2} e^{-\frac{2}{5}t}, \quad \forall t \ge 5X, \label{eq:trans-est-hol-tail} \\
		|\widehat{\phi}^+_{\kappa}(t)| &\ll_{\delta} (1+|t|)^{\kappa} \frac{1+|\log(X)|+X^{-2|\Im t|}}{1+X} , \quad \forall t \in \mathbb{R} \cup ]-(\tfrac{1}{2}-\delta)i,(\tfrac{1}{2}-\delta)i[, \label{eq:trans-est-maass-plus-triv}\\ 
		|\widehat{\phi}^+_{\kappa}(t)| &\ll  |t|^{\kappa} \left( |t|^{-3/2}+|t|^{-2}X \right) \min\left\{ 1, \frac{C}{B|t|} \right\}, \quad \forall |t|\ge 1, \label{eq:trans-est-maass-plus-tail}\\
		|\widehat{\phi}^-_{0}(t)| &\ll_{\delta}  \frac{1+|\log(X)|+X^{-2|\Im t|}}{1+X} , \quad \forall t \in \mathbb{R} \cup ]-(\tfrac{1}{2}-\delta)i,(\tfrac{1}{2}-\delta)i[, \label{eq:trans-est-maass-minus-triv} \\
		|\widehat{\phi}^-_{0}(t)| &\ll \left( |t|^{-3/2}+|t|^{-2}X \right) \min\left\{ 1, \frac{C}{B|t|} \right\}, \quad \forall |t| \ge 1. \label{eq:trans-est-maass-minus-tail}
	\end{align}
\end{lem}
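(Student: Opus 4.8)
The plan is to handle the three transforms separately, in each case inserting standard uniform pointwise bounds and asymptotic (Debye/Airy) expansions for the Bessel functions $J_\nu$ and $K_\nu$, splitting the $x$-integral according to how the argument $x$---which ranges over the support of $\phi$, an interval of length $\asymp X$ situated around $X$---compares to the order, and extracting cancellation by integration by parts against $\phi$, using the hypotheses $\|\phi'\|_1 \ll 1$ and $\|\phi''\|_1 \ll C/(XB)$.

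For the holomorphic transform $\widetilde{\phi}$ I would first apply the three-term recurrence $\tfrac{2t}{x}J_t(x)=J_{t-1}(x)+J_{t+1}(x)$, which rewrites $\widetilde{\phi}(t)=\tfrac{1}{2t}\int_0^\infty (J_{t-1}(x)+J_{t+1}(x))\phi(x)\,dx$ and immediately supplies the prefactor $t^{-1}$ in \eqref{eq:trans-est-hol-triv}. It then remains to bound $\int_0^\infty J_\nu(x)\phi(x)\,dx$ with $\nu=t\pm1$ by $\log(2t)^{2/3}$; I would integrate by parts once to replace this by $-\int_0^\infty \big(\int_0^x J_\nu\big)\,\phi'(x)\,dx$ and control the partial integral $\int_0^x J_\nu$ uniformly via the bounds $|J_\nu(x)|\ll\min\{1,x^{-1/3}\}$ and $|J_\nu(x)|\ll x^{-1/4}(|x-\nu|+\nu^{1/3})^{-1/4}$ recorded above, the slowly growing logarithmic factor arising from the transition region $x\asymp\nu$. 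For the tail bound \eqref{eq:trans-est-hol-tail}, where $t\ge 5X$ forces $x\le 2X\le\tfrac{2}{5}t$ throughout the support, the Taylor-type bound is useless (its $\exp(\omega^2/4)$ blows up), so I would instead use the uniform exponential-decay asymptotics for $J_t(x)$ in the regime $x<t$, which give $|J_t(x)|\ll t^{-1/2}e^{-ct}$ with $c$ comfortably larger than $\tfrac{2}{5}$; integrating against $\phi\,\tfrac{dx}{x}$ over a range of logarithmic length then yields the claim.

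For the spectral transforms I would use that, for real $t$ and for $t$ in the small imaginary window, the kernels $\tfrac{1}{\sinh(\pi t)}(J_{2it}(x)-(-1)^\kappa J_{-2it}(x))$ and $\cosh(\pi t)K_{2it}(x)$ are uniformly $O(1)$ and have the usual small- and large-argument behaviour. To prove \eqref{eq:trans-est-maass-plus-triv} and \eqref{eq:trans-est-maass-minus-triv} I would split at $x=1$: on $x\le1$ the leading small-argument terms are proportional to $x^{\pm 2it}$, so the kernel is $O(1)$ for real $t$ and $O(x^{-2|\Im t|})$ for imaginary $t$, and $\int\tfrac{dx}{x}$ over the portion of the support with $x\le 1$ supplies the factor $1+|\log X|$, respectively $X^{-2|\Im t|}$; on $x\ge1$ the Bessel functions decay like $x^{-1/2}$ (with genuine oscillation for $J$ and the factor $e^{-x}$ for $K$), so this part is $O(1)$, and when $X$ is large the entire support lies in the decaying range, producing (after exploiting oscillation via one integration by parts) the normalisation $1/(1+X)$. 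The prefactor $(1+|t|)^\kappa$ comes from the $(-it)^\kappa$ factor.

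The genuinely delicate estimates---and where I expect the main obstacle---are the large-$|t|$ bounds \eqref{eq:trans-est-maass-plus-tail} and \eqref{eq:trans-est-maass-minus-tail}. Here I would invoke the uniform large-order asymptotic expansions of $J_{2it}(x)$ and $K_{2it}(x)$, in which the kernel takes the form $a(x,t)\,e^{i|t|\psi(x)}$ with amplitude $a\ll|t|^{-1/2}$ and a phase whose derivative is bounded below away from the turning point $x\asymp|t|$. Integration by parts against the smooth weight $\phi$, using $\|\phi'\|_1\ll1$ and $\|\phi''\|_1\ll C/(XB)$, then produces the gain $\min\{1,C/(B|t|)\}$, while the amplitude and the width $\asymp X$ of the support account for the envelope $|t|^{-3/2}+|t|^{-2}X$. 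The crux is the regime in which the support of $\phi$ straddles the turning point of the Bessel function: there the phase becomes stationary, integration by parts degenerates, and one must revert to the Airy-type bound, balancing the length of the non-oscillatory window against the amplitude and carefully tracking the $B$-dependence. Pinning down the precise numerology---the exact power $\tfrac{2}{3}$ of the logarithm in \eqref{eq:trans-est-hol-triv} and the sharp exponent $\tfrac{2}{5}$ in \eqref{eq:trans-est-hol-tail}---is the most laborious bookkeeping, but conceptually all six estimates reduce to known uniform Bessel bounds combined with stationary phase and integration by parts.
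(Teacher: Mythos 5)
Your plan is the right \emph{family} of argument, but note first that the paper itself does not prove this lemma at all: it imports \eqref{eq:trans-est-hol-triv}--\eqref{eq:trans-est-hol-tail} from \cite[Lem 7]{twistlinnik} and \eqref{eq:trans-est-maass-plus-triv}--\eqref{eq:trans-est-maass-minus-tail} from \cite[Lem 7.1]{IwDes}, remarking only that $\kappa=1$ is handled as $\kappa=0$. So you are reconstructing cited results, and the reconstruction has a step that genuinely fails. For \eqref{eq:trans-est-hol-triv}, after the recurrence $J_t(x)/x=\tfrac{1}{2t}(J_{t-1}(x)+J_{t+1}(x))$ and one integration by parts, you propose to control $\sup_y\bigl|\int_0^y J_\nu(x)\,dx\bigr|$ \emph{via} the pointwise bounds $|J_\nu|\ll\min\{1,x^{-1/3}\}$ and $|J_\nu(x)|\ll x^{-1/4}(|x-\nu|+\nu^{1/3})^{-1/4}$. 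Absolute-value bounds cannot do this: in the oscillatory range $x>\nu$ one has $\int_\nu^{2\nu}x^{-1/4}(x-\nu+\nu^{1/3})^{-1/4}\,dx\asymp\nu^{1/2}$, so this route only yields $\widetilde{\phi}(t)\ll t^{-1/2}$, not $t^{-1}\log(2t)^{2/3}$. The partial integrals of $J_\nu$ are indeed uniformly bounded, but only because of cancellation in the oscillation of $J_\nu$ beyond its turning point (uniform asymptotics plus a van der Corput estimate, or the classical uniform boundedness of $\int_0^y J_\nu$); this is precisely the step your sketch omits. The loss is not cosmetic: in the proof of \eqref{eq:hol-contribution} the quantity $\sup_{k\sim K}|\widetilde{\phi}(k-1)|$ is multiplied by spectral sums of size $K^2$ and summed over dyadic $K\ll X$, so $t^{-1/2}$ in place of $t^{-1+o(1)}$ would produce $X^{3/2}$ instead of $1+X$ and break the downstream bound.

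On the large-$|t|$ estimates \eqref{eq:trans-est-maass-plus-tail}, \eqref{eq:trans-est-maass-minus-tail}, your description of the kernels is also off in a way that matters. For real $t$ the $J$-kernel $(J_{2it}(x)-(-1)^{\kappa}J_{-2it}(x))/\sinh(\pi t)$ has \emph{no} turning point on $(0,\infty)$: writing $J_{2it}(x)=x^{-1/2}w(x)$ gives $w''+\bigl(1+(4t^2+\tfrac14)x^{-2}\bigr)w=0$, so the local frequency $\sqrt{x^2+4t^2}/x$ is bounded below by $1$ and integration by parts never degenerates there. It is the $K$-kernel $\cosh(\pi t)K_{2it}(x)$ that has an Airy turning point, at $x\asymp 2|t|$, where its size is $\asymp|t|^{-1/3}$ --- contradicting your claimed uniform amplitude $a\ll|t|^{-1/2}$ in exactly the regime you identify as the crux; the term $|t|^{-2}X$ (which is $\asymp|t|^{-1}$ when the support of $\phi$ straddles $x\asymp|t|$) exists to absorb this bump, and extracting the additional factor $\min\{1,C/(B|t|)\}$ there is the genuinely delicate point that your proposal defers. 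By contrast, your treatment of \eqref{eq:trans-est-hol-tail} is fine (Debye asymptotics give $|J_t(x)|\ll t^{-1/2}e^{-ct}$ with $c=\alpha-\tanh\alpha\approx 0.65>\tfrac25$ for $x\le\tfrac25 t$), and your two-fold integration by parts giving $t^{-5/2}C/B$ for the $J$-kernel is the correct mechanism. In short: one false bound, one missing cancellation argument, and the admitted unfinished ``numerology''; the complete arguments are those of \cite[Lem 7.1]{IwDes} and \cite[Lem 7]{twistlinnik}, which the paper simply cites.
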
 
\begin{proof}
	The estimates \eqref{eq:trans-est-hol-triv} \eqref{eq:trans-est-hol-triv} may be found in \cite[Lem 7]{twistlinnik}, whereas \eqref{eq:trans-est-maass-plus-triv}-\eqref{eq:trans-est-maass-minus-tail} for $\kappa=0$ may be found in \cite[Lem 7.1]{IwDes}. The proof for $\kappa=1$ is almost identical to the one for $\kappa=0$. 
Hence, we refer the reader to \cite[Lem 7.1]{IwDes} for this case as well.
\end{proof}

\subsection{Holomorphic spectrum}

In this section, we shall prove for $m,n\in \mathbb{N}$ and $\kappa\in \{0,1\}$ that
\begin{multline}
	 \frac{\sqrt{w_q}}{\varphi(Q/q)}\sum_{\substack{\chi \mod(Q/q) \\ \chi(-1)=(-1)^{\kappa}}} \sum_{\substack{r \mod((q,Q/q)) \\(r,Q)=1 }} \mathcal{H}^{\chi,\kappa}_{\infty,r/q}(m,nw_q;\phi) \\
	\prec \frac{Q^{1/2}(q,Q/q)^{1/2}}{q^{1/2}}  \left(   1+X +  \frac{m^{1/3}(q,m)^{1/4}+n^{1/3}(q,n)^{1/4}}{(q^{2},Q)^{1/12}Q^{1/2}}+ \frac{(mn)^{1/3}(q,m)^{1/4}(q,n)^{1/4}}{(q^2,Q)^{1/6}Q}  \right).
	\label{eq:hol-contribution}
\end{multline}
To prove this, we split the sum inside $\mathcal{H}$ into dyadic sub-sums with $k \sim K$, where $K \ge 1$. An application of Cauchy--Schwarz shows

\begin{equation*} \begin{aligned}
		&\sum_{\substack{\chi \mod(Q/q) \\ \chi(-1)=(-1)^{\kappa}}} \sum_{\substack{r \mod((q,Q/q)) \\(r,Q)=1 }}  \sum_{\substack{k \equiv \kappa \,(2)\\ 2 \le k \sim K}} \sum_{f \in \mathcal{B}_{\rm H}^k(\Gamma_0(Q),\chi)} \frac{\Gamma(k)}{(4\pi \sqrt{mnw_q})^{k-1}} |\overline{\psi_f(\infty,m)}\psi_f(r/q,nw_q)| |\widetilde{\phi}(k-1)| \\
		\ll & \left(\sum_{\substack{\chi \mod(Q/q) \\ \chi(-1)=(-1)^{\kappa}}} \sum_{\substack{r \mod((q,Q/q)) \\(r,Q)=1 }}  \sum_{\substack{k \equiv \kappa \,(2)\\ 2 \le k \sim K}} \sum_{f \in \mathcal{B}_{\rm H}^k(\Gamma_0(Q),\chi)} \frac{\Gamma(k)}{(4\pi nw_q)^{k-1}} |\psi_f(r/q,nw_q)|^2\right)^{1/2} \\ 
		& \times \left(    \sum_{\substack{\chi \mod(Q/q) \\ \chi(-1)=(-1)^{\kappa}}} (q,Q/q)  \sum_{\substack{k \equiv \kappa \,(2)\\ 2 \le k \sim K}} \sum_{f \in \mathcal{B}_{\rm H}^k(\Gamma_0(Q),\chi)} \frac{\Gamma(k)}{(4\pi m)^{k-1}} |\psi_f(\infty,m)|^2    \right)^{1/2} \sup_{2 \le k \sim K} |\widetilde{\phi}(k-1)|.
\end{aligned}\end{equation*}
The latter we bound using \eqref{eq:trans-est-hol-triv} and Proposition \ref{prop:Fourier-holo-diag-upper-bound} by
$$
\prec \frac{Q}{q} (q,Q/q) \left( K + \frac{m^{1/3}(q,m)^{1/4}+n^{1/3}(q,n)^{1/4}}{(q^{2},Q)^{1/12}Q^{1/2}}+ \frac{(mn)^{1/3}(q,m)^{1/4}(q,n)^{1/4}}{(q^2,Q)^{1/6}Q} \right).
$$
For $K \ge 1+5X$, we may use \eqref{eq:trans-est-hol-tail} instead to give the bound
$$
\prec \frac{Q}{q} (q,Q/q) e^{-K/5} \left( K^{3/2} + \frac{m^{1/3}(q,m)^{1/4}+n^{1/3}(q,n)^{1/4}}{(q^{2},Q)^{1/12}Q^{1/2}}K^{1/2}+ \frac{(mn)^{1/3}(q,m)^{1/4}(q,n)^{1/4}}{(q^2,Q)^{1/6}Q} \right).
$$
Summing over all dyadic intervals yields \eqref{eq:hol-contribution}.

\subsection{Maa{\ss} and Eisenstein spectrum}

In this section, we shall prove for $m\in \mathbb{N}$, $n \in \mathbb{Z}\backslash\{0\}$, and $\kappa\in \{0,1\}$ that
\begin{multline}
	\frac{\sqrt{w_q}}{\varphi(Q/q)}\sum_{\substack{\chi \mod(Q/q) \\ \chi(-1)=(-1)^{\kappa}}} \sum_{\substack{r \mod((q,Q/q)) \\(r,Q)=1 }} \left( {}^{\ast}\mathcal{M}^{\chi,\kappa,\epsilon}_{\infty,r/q}(m,nw_q;\phi) +\mathcal{E}^{\chi,\kappa,\epsilon}_{\infty,r/q}(m,nw_q;\phi) \right) \\
	\prec \frac{Q^{1/2}(q,Q/q)^{1/2}}{q^{1/2}} \left( \frac{C^{1/2}}{B^{1/2}}+X+ \frac{|m|^{1/4}(q,m)^{1/4}+|n|^{1/4}(q,n)^{1/4}}{Q^{1/2}} + \frac{|mn|^{1/4}(q,m)^{1/4}(q,n)^{1/4}}{Q}   \right),
	\label{eq:maass-contribution}
\end{multline}
where the asterisk indicates that the sum in $\mathcal{M}$ is only over the Maa{\ss} forms $f$ with $t_f \in \mathbb{R}$. As in the proof for holomorphic forms, we split the Maa{\ss} forms and Eisenstein integral into dyadic intervals $|t| \sim T$. By abuse of notation, we include the whole sum with $-1 \le t \le 1$ into $|t| \sim T$ when $T=1$. By Cauchy--Schwarz, we have

\begin{equation*} \begin{aligned}
		&\sum_{\substack{\chi \mod(Q/q) \\ \chi(-1)=(-1)^{\kappa}}} \sum_{\substack{r \mod((q,Q/q)) \\(r,Q)=1 }} \Biggl( \sum_{\substack{f \in \mathcal{B}^{\kappa}_{\rm M}(\Gamma_0(Q),\chi) \\ |t_f| \sim T}}  \frac{\sqrt{|mnw_q|}}{\cosh(\pi t_f)} |\overline{\rho_f(\infty,m)}\rho_f(r/q,nw_q)| |\widehat{\phi}^{\pm}_{\epsilon \kappa}(t_f)|  \\
		& \quad + \sum_{{\rm E} \in \mathcal{B}^{\kappa}_{\rm E}(\Gamma_0(Q),\chi)} \int_{|t|\sim T}  \frac{\sqrt{|mnw_q|}}{\cosh(\pi t)} |\overline{\rho_{\rm E}(\infty,m;t)}\rho_{\rm E}(r/q,nw_q;t)| |\widehat{\phi}^{\pm}_{\epsilon \kappa}(t)|  dt \Biggr)\\
		\ll & \Biggl(\sum_{\substack{\chi \mod(Q/q) \\ \chi(-1)=(-1)^{\kappa}}} \sum_{\substack{r \mod((q,Q/q)) \\(r,Q)=1 }} \Biggl( \sum_{\substack{f \in \mathcal{B}^{\kappa}_{\rm M}(\Gamma_0(Q),\chi) \\ |t_f| \sim T}}  \frac{|nw_q|}{\cosh(\pi t_f)} (1+|t_f|)^{\sign(n)\kappa} |\rho_f(r/q,nw_q)|^2  \\
		& \quad + \sum_{{\rm E} \in \mathcal{B}^{\kappa}_{\rm E}(\Gamma_0(Q),\chi)} \int_{|t|\sim T}  \frac{|nw_q|}{\cosh(\pi t)} (1+|t|)^{\sign(n)\kappa} |\rho_{\rm E}(r/q,nw_q;t)|^2  dt \Biggr)\Biggr)^{1/2} \\ 
		& \times \Biggl(   \sum_{\substack{\chi \mod(Q/q) \\ \chi(-1)=(-1)^{\kappa}}} (q,Q/q) \Biggl( \sum_{\substack{f \in \mathcal{B}^{\kappa}_{\rm M}(\Gamma_0(Q),\chi) \\ |t_f| \sim T}}  \frac{|m|}{\cosh(\pi t_f)} (1+|t_f|)^{\kappa} |\rho_f(\infty,m)|^2  \\
		&  \quad + \sum_{{\rm E} \in \mathcal{B}^{\kappa}_{\rm E}(\Gamma_0(Q),\chi)} \int_{|t|\sim T}  \frac{|m|}{\cosh(\pi t)}  (1+|t|)^{\kappa} |\rho_{\rm E}(r/q,nw_q;t)|^2  dt \Biggr)   \Biggr)^{1/2} \sup_{|t| \sim T} \frac{|\widehat{\phi}^{\pm}_{\epsilon \kappa}(t)|}{(1+|t|)^{\epsilon \kappa}}.
\end{aligned}\end{equation*}
We bound further using \eqref{eq:trans-est-maass-plus-triv}-\eqref{eq:trans-est-maass-minus-tail}, and Proposition \ref{prop:chi-avg-fourier-bound}:
\begin{multline}
\prec (q,Q/q)\frac{Q}{q} \left( T^2 + \frac{|m|^{1/4}(q,m)^{1/4}+|n|^{1/4}(q,n)^{1/4}}{Q^{1/2}} T + \frac{|mn|^{1/4}(q,m)^{1/4}(q,n)^{1/4}}{Q}  \right) \\ \times \min\left\{(1+X)^{-1} ,\left(T^{-3/2}+T^{-2}X  \right), \left(T^{-3/2}+T^{-2}X  \right)  \frac{C}{BT}\right\}.
\label{eq:maass-pre-contribution}
\end{multline}
We now sum up all of dyadic intervals where we make use of the first term in the minimum if $1\le T \le 1+X$, the second term if $1+X \le T \le \frac{C}{B}$, and the third term if $1+X, \frac{C}{B} \le T$. This yields \eqref{eq:maass-contribution}.

\subsection{Exceptional Maa{\ss} spectrum}

The exceptional spectrum, i.e.\@ those eigenfunctions $f \in \mathcal{B}_{\rm M}^{\kappa}(\Gamma_0(Q),\chi)$ with $\Im t_f \neq 0$ only arise when $\kappa=0$. Furthermore, we have $|\Im t_f| \le \vartheta$, where we may take $\vartheta=7/64$ by the work of Kim--Sarnak \cite{KimSarnak}. In this section, we shall prove that their contribution is bounded as follows:
\begin{equation}\begin{aligned}
	&\frac{\sqrt{w_q}}{\varphi(Q/q)}\sum_{\substack{\chi \mod(Q/q) \\ \chi(-1)=1}} \sum_{\substack{r \mod((q,Q/q)) \\(r,Q)=1 }}  {}^{\dagger}\mathcal{M}^{\chi,0,\epsilon}_{\infty,r/q}(m,nw_q;\phi)\\
	\prec 
	&  \frac{Q^{1/2}(q,Q/q)^{1/2}}{q^{1/2}} \Biggl( 1 + \frac{|m|^{1/4}(q,m)^{1/4}+|n|^{1/4}(q,n)^{1/4}}{Q^{1/2}} + \frac{|mn|^{1/4}(q,m)^{1/4}(q,n)^{1/4}}{Q}  \Biggr) \\
	& \quad \times \Biggl( 1+\frac{|m|(q,m)+|n|(q,n)}{Q^{2}}X^{-1} + \frac{|mn|(q,m)(q,n)}{Q^4}  X^{-2} \Biggr)^{\vartheta},
	\label{eq:maass-exc-contribution}
	\end{aligned}
\end{equation}
where the dagger stands for restricting the sum to the exceptional eigenfunctions. By Cauchy--Schwarz, we have
\begin{equation*} \begin{aligned}
		&\sum_{\substack{\chi \mod(Q/q) \\ \chi(-1)=1}} \sum_{\substack{r \mod((q,Q/q)) \\(r,Q)=1 }} \sum_{\substack{f \in \mathcal{B}^{0}_{\rm M}(\Gamma_0(Q),\chi) \\ 0 <|\Im t_f| \le \vartheta}}  \frac{\sqrt{|mnw_q|}}{\cosh(\pi t_f)} |\overline{\rho_f(\infty,m)}\rho_f(r/q,nw_q)| |\widehat{\phi}^{\pm}_{\epsilon \kappa}(t_f)|  \\
		\ll & \left( \sum_{\substack{\chi \mod(Q/q) \\ \chi(-1)=1}} \sum_{\substack{r \mod((q,Q/q)) \\(r,Q)=1 }}  \sum_{\substack{f \in \mathcal{B}^{0}_{\rm M}(\Gamma_0(Q),\chi) \\ 0 <|\Im t_f| \le \vartheta}}  \frac{|nw_q|}{\cosh(\pi t_f)}  |\rho_f(r/q,nw_q)|^2 |\widehat{\phi}^{\pm}_{\epsilon \kappa}(t_f)| \right)^{1/2}  \\
		& \times \left(   \sum_{\substack{\chi \mod(Q/q) \\ \chi(-1)=1}} (q,Q/q) \sum_{\substack{f \in \mathcal{B}^{0}_{\rm M}(\Gamma_0(Q),\chi) \\ 0 <|\Im t_f| \le \vartheta}}  \frac{|m|}{\cosh(\pi t_f)}  |\rho_f(\infty,m)|^2 |\widehat{\phi}^{\pm}_{\epsilon \kappa}(t_f)| \right)^{1/2}.
\end{aligned}\end{equation*}
We bound $|\widehat{\phi}^{\pm}_{\epsilon\kappa}|$ using \eqref{eq:trans-est-maass-plus-triv}, \eqref{eq:trans-est-maass-minus-triv}. If $X \ge 1$, then we bound the spectral sum using Proposition \ref{prop:chi-avg-fourier-bound} and get the same bound as in \eqref{eq:maass-pre-contribution} with $T \sim 1$. If $X \le 1$, we may argue as in \cite[\S 2.1]{FouvryMichel} and use of Proposition \ref{prop:exceptional-chi-avg-fourier-bound} instead to find
\begin{equation*}\begin{aligned}
		& \sum_{\substack{\chi \mod(Q/q) \\ \chi(-1)=1}}  \sum_{\substack{f \in \mathcal{B}^{0}_{\rm M}(\Gamma_0(Q),\chi) \\ 0 <|\Im t_f| \le \vartheta}}  \frac{|m|}{\cosh(\pi t_f)}  |\rho_f(\infty,m)|^2 X^{-2|t_f|} \\
 		\ll & \sum_{\substack{\chi \mod(Q/q) \\ \chi(-1)=1}} \sum_{\substack{f \in \mathcal{B}^{0}_{\rm M}(\Gamma_0(Q),\chi) \\ 0 <|\Im t_f| \le \vartheta}}  \frac{|m|}{\cosh(\pi t_f)}  |\rho_f(\infty,m)|^2  \\  & \quad \times  \min\left\{X^{-1}, 1+\frac{Q^2}{|m|(q,m)}\right\}^{2|t_f|} \left(  \min\left\{X^{-1}, 1+\frac{Q^2}{|m|(q,m)}\right\}  X \right)^{-2\vartheta} \\
 		\prec &  \frac{Q}{q} \left(1+\frac{|m|^{1/2}(q,m)^{1/2}}{Q} \right) \left( 1+  \left(\frac{Q}{|m|^{1/2}(q,m)^{1/2}}\right)^{-4\vartheta} X^{-2\vartheta} \right).
\end{aligned}\end{equation*}
Likewise, we find
\begin{multline*}
		 \sum_{\substack{\chi \mod(Q/q) \\ \chi(-1)=1}} \sum_{\substack{r \mod((q,Q/q)) \\(r,Q)=1 }}  \sum_{\substack{f \in \mathcal{B}^{0}_{\rm M}(\Gamma_0(Q),\chi) \\ 0 <|\Im t_f| \le \vartheta}}  \frac{|nw_q|}{\cosh(\pi t_f)}  |\rho_f(r/q,nw_q)|^2 |\widehat{\phi}^{\pm}_{\epsilon \kappa}(t_f)| \\
		\prec   (q,Q/q)\frac{Q}{q} \left(1+\frac{|n|^{1/2}(q,n)^{1/2}}{Q} \right) \left( 1+  \left(\frac{Q}{|n|^{1/2}(q,n)^{1/2}}\right)^{-4\vartheta} X^{-2\vartheta} \right).
 \end{multline*}
We thus conclude \eqref{eq:maass-exc-contribution}.

\subsection{Final bound}

By adding up the contributions \eqref{eq:Klooster-AP-trivial-short} \eqref{eq:hol-contribution}, \eqref{eq:maass-contribution}, \eqref{eq:maass-exc-contribution} and choosing
$$
B \asymp \frac{Q C^{2/3} (q,Q/q)^{1/3}}{q^{1/3}(m,n,q)^{1/3}},
$$
we find that
\begin{multline}
	\sum_{\substack{C  <  c \le 2C \\ c \equiv aq \mod(Q)}} \frac{S(m,n;c)}{c} \prec \frac{(m,n)}{C^{1/2}}  + \frac{(m,n,q)^{1/6}(q,Q/q)^{1/3}}{q^{1/3}}C^{1/6} \\
	+\frac{Q^{1/2}(q,Q/q)^{1/2}}{q^{1/2}} \Biggl( 1+\frac{|mn|^{1/2}}{C}+\frac{|m|^{1/3}(q,m)^{1/4}+|n|^{1/3}(q,n)^{1/4}}{(q^2,Q)^{1/12}Q^{1/2}} + \frac{|mn|^{1/3}(q,m)^{1/4}(q,n)^{1/4}}{(q^2,Q)^{1/6}Q} \Biggr)\\
	+ \frac{Q^{1/2}(q,Q/q)^{1/2}}{q^{1/2}} \Biggl( 1+\frac{|m|^{1/4}(q,m)^{1/4}+|n|^{1/4}(q,n)^{1/4}}{Q^{1/2}} + \frac{|mn|^{1/4}(q,m)^{1/4}(q,n)^{1/4}}{Q} \Biggr) \\
	\times \Biggl( 1+\frac{|m|(q,m)+|n|(q,n)}{|mn|^{1/2}} \frac{C}{Q^2} + (q,m)(q,n) \frac{C^2}{Q^4} \Biggr)^{\vartheta}
	\label{eq:dyadic-bound}
\end{multline}
valid for 
$$
C \ge \frac{Q^3(q,Q/q)}{q(m,n,q)}.
$$
For smaller $C$, we may use the bound \eqref{eq:Klooster-AP-trivial}, in order to prove the following theorem.
\begin{thm} Let $m,n \in \mathbb{Z}\backslash\{0\}$ be two non-zero integers. Further let $a,q,Q \in \mathbb{N}$ be integers with $q \mid Q$ and $(a,Q/q)=1$. Then, we have
	\begin{multline*}
		\sum_{\substack{c \le C \\ c \equiv aq \mod(Q)}} \frac{S(m,n;c)}{c} \prec \frac{(m,n,q)^{1/2}}{q^{1/2}} + \frac{(m,n)}{Q^{1/2}}  + \frac{(m,n,q)^{1/6}(q,Q/q)^{1/3}}{q^{1/3}}C^{1/6} \\
		+\frac{Q^{1/2}(q,Q/q)^{1/2}}{q^{1/2}} \Biggl( 1+\frac{|mn|^{1/2}}{Q}+\frac{|m|^{1/3}(q,m)^{1/4}+|n|^{1/3}(q,n)^{1/4}}{(q^2,Q)^{1/12}Q^{1/2}} + \frac{|mn|^{1/3}(q,m)^{1/4}(q,n)^{1/4}}{(q^2,Q)^{1/6}Q} \Biggr)\\
		+ \frac{Q^{1/2}(q,Q/q)^{1/2}}{q^{1/2}} \Biggl( 1+\frac{|m|^{1/4}(q,m)^{1/4}+|n|^{1/4}(q,n)^{1/4}}{Q^{1/2}} + \frac{|mn|^{1/4}(q,m)^{1/4}(q,n)^{1/4}}{Q} \Biggr) \\
		\times \Biggl( 1+\frac{|m|(q,m)+|n|(q,n)}{|mn|^{1/2}} \frac{C}{Q^2} + (q,m)(q,n) \frac{C^2}{Q^4} \Biggr)^{\vartheta}.
	\end{multline*}
	\label{thm:mnqQ-depence}
\end{thm}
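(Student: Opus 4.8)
The statement upgrades the dyadic estimate \eqref{eq:dyadic-bound}, which controls a single block $C<c\le 2C$, to the full range $\sum_{c\le C}$; as noted before \eqref{eq:Klooster-AP-trivial}, the symmetry $S(m,n;c)=S(-m,-n;c)$ lets me assume $m>0$ throughout, so \eqref{eq:dyadic-bound} is at my disposal. The plan is to glue \eqref{eq:dyadic-bound} to the trivial bound \eqref{eq:Klooster-AP-trivial} at the threshold
$$
C_0:=\frac{Q^3(q,Q/q)}{q(m,n,q)},
$$
which is exactly the smallest $C$ for which the smoothing length $B$ fixed before \eqref{eq:dyadic-bound} satisfies $B\le C/2$; thus \eqref{eq:dyadic-bound} is available precisely for $C\ge C_0$. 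I record for later use the elementary inequality $C_0\ge Q$, which follows from $q(m,n,q)\le q^2\le Q^2\le Q^2(q,Q/q)$.

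If $C\le C_0$, I would simply invoke \eqref{eq:Klooster-AP-trivial}, which gives $\tfrac{(m,n,q)^{1/2}}{q^{1/2}}+\tfrac{(m,n,q)^{1/2}C^{1/2}}{Q}$; since $C\le C_0$ the second summand is at most $\tfrac{(m,n,q)^{1/2}C_0^{1/2}}{Q}=\tfrac{Q^{1/2}(q,Q/q)^{1/2}}{q^{1/2}}$, and both terms already appear in the claimed bound, settling this range. For $C>C_0$ I would split $\sum_{c\le C}=\sum_{c\le C_0}+\sum_{C_0<c\le C}$: the first sum is controlled by \eqref{eq:Klooster-AP-trivial} at $C=C_0$, contributing $\tfrac{(m,n,q)^{1/2}}{q^{1/2}}+\tfrac{Q^{1/2}(q,Q/q)^{1/2}}{q^{1/2}}$, while the second I would cut into $O(\log C)$ dyadic blocks $C'<c\le 2C'$ with $C_0\le C'\le C$ and feed each to \eqref{eq:dyadic-bound}.

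Summing the dyadic contributions is then bookkeeping dictated by monotonicity in $C'$. The summands of \eqref{eq:dyadic-bound} increasing in $C'$, namely $\tfrac{(m,n,q)^{1/6}(q,Q/q)^{1/3}}{q^{1/3}}C'^{1/6}$ and the exceptional factor $(1+\tfrac{|m|(q,m)+|n|(q,n)}{|mn|^{1/2}}\tfrac{C'}{Q^2}+(q,m)(q,n)\tfrac{C'^2}{Q^4})^{\vartheta}$, are dominated by their value at the top block $C'\asymp C$, which reproduces the matching terms of the theorem; the summands independent of $C'$ (the entire factor $\tfrac{Q^{1/2}(q,Q/q)^{1/2}}{q^{1/2}}(1+\cdots)$ apart from its $\tfrac{|mn|^{1/2}}{C'}$ piece) are merely multiplied by the harmless $\log C$, which is absorbed into $\prec$.

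The only genuine check concerns the two summands of \eqref{eq:dyadic-bound} that decrease in $C'$, namely $\tfrac{(m,n)}{C'^{1/2}}$ and $\tfrac{Q^{1/2}(q,Q/q)^{1/2}}{q^{1/2}}\cdot\tfrac{|mn|^{1/2}}{C'}$; being decreasing, their dyadic sum is dominated by the bottom block $C'\asymp C_0$. Here the inequality $C_0\ge Q$ gives $\tfrac{(m,n)}{C_0^{1/2}}\le\tfrac{(m,n)}{Q^{1/2}}$ and $\tfrac{|mn|^{1/2}}{C_0}\le\tfrac{|mn|^{1/2}}{Q}$, so these contributions are absorbed into the terms $\tfrac{(m,n)}{Q^{1/2}}$ and $\tfrac{Q^{1/2}(q,Q/q)^{1/2}}{q^{1/2}}\cdot\tfrac{|mn|^{1/2}}{Q}$ recorded in the theorem. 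Collecting every piece then yields the asserted bound. Since the analytic substance is entirely contained in \eqref{eq:dyadic-bound}, I expect no real obstacle here beyond this monotonicity bookkeeping and the single balancing inequality $C_0\ge Q$; indeed $C_0$ is defined exactly so that the trivial and dyadic bounds coincide at the cut-off.
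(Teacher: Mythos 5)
Your proposal is correct and is essentially the paper's own argument: the paper proves the dyadic bound \eqref{eq:dyadic-bound} for $C \ge Q^3(q,Q/q)/(q(m,n,q))$ and then simply remarks that \eqref{eq:Klooster-AP-trivial} handles smaller $C$, which is exactly your gluing at $C_0$ followed by dyadic summation. You have in fact spelled out the bookkeeping the paper leaves implicit (the geometric domination of the monotone terms, the inequality $C_0 \ge Q$ absorbing $(m,n)/C_0^{1/2}$ and $|mn|^{1/2}/C_0$ into the stated terms, and the coincidence of the trivial and dyadic bounds at the cut-off), with the only loose end being the standard partial-block technicality in the dyadic tiling, which is harmless since one may decompose $(C_0,C]$ from the top and bound the leftover piece below $2C_0$ trivially.
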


\begin{rem} The $m,n$ depence can likely be improved by considering a Hecke eigenbasis and appealing to (the progress towards) the Ramanujan--Petersson conjecture. This would require computing the Fourier coefficients at the cusps considered in this paper in terms of Hecke eigenvalues. We refer to \cite{Min-Fourier} for how one may proceed in computing these and to \cite{BlomerKloos}, \cite{Humphries16} for a Hecke eigenbasis.
	\end{rem}

\bibliography{RafBib}
\end{document}